\newtheorem{thm}{Theorem}[section]
\newtheorem*{thm*}{Theorem}
\newtheorem{lemma}[thm]{Lemma}
\newtheorem*{lemma*}{Lemma}
\newtheorem*{prop*}{Proposition}
\newtheorem{cor}[thm]{Corollary}
\newtheorem{fact}[thm]{Fact}
\newtheorem*{not*}{Notation}
\newtheorem{claim}[thm]{Claim}
\newtheorem*{claim*}{Claim}
\newtheorem*{fact*}{Fact}
\newtheorem{dfn}[thm]{Definition}
\newtheorem{conj}[thm]{Conjecture}
\newcommand{\Q}{\mathbb{Q}}
\newcommand{\Z}{\mathbb{Z}}
\newcommand{\R}{\mathbb{R}}
\newcommand{\C}{\mathbb{C}}
\newcommand{\LL}{\mathcal{L}}
\newcommand{\B}{\mathcal{B}}
\def\PP{\mathcal{P}}
\DeclareMathOperator{\tr}{Tr}
\DeclareMathOperator\LIS{LIS}
\DeclareMathOperator{\sign}{sign}
\DeclareMathOperator{\Ex}{\mathbb{E}}
\newcommand{\eps}{\varepsilon}
\newcommand{\Span}{\mathrm{Span}}
\newcommand{\lcm}{\mathrm{lcm}}
\renewcommand{\hat}[1]{\widehat{#1}}
\newcommand{\choosesq}[2]{\genfrac{[}{]}{0pt}{}{#1}{#2}}
\newcommand{\ip}[1]{\langle #1 \rangle}
\newcommand{\ignore}[1]{}
\renewcommand{\Pr}{\mathbb{P}}
\DeclareMathOperator{\vol}{Vol}
\begin{document}

\title{Probabilistic existence of regular combinatorial structures\thanks{An extended abstract \cite{KuperbergLovettPeled12} of this paper appeared in the 44th ACM Symposium on Theory of Computing (STOC 2012).}}

\author{Greg Kuperberg \thanks{University of California, Davis.  E-mail:
    \texttt{greg@math.ucdavis.edu}. Supported by NSF grant CCF-1013079.}
    \and Shachar Lovett \thanks{Institute for Advanced Study.  E-mail:
    \texttt{slovett@math.ias.edu}.  Supported by NSF grant DMS-0835373.}
    \and Ron Peled \thanks{Tel Aviv University, Israel.
    E-mail: \texttt{peledron@post.tau.ac.il}. Supported by an ISF
    grant and an IRG grant.}}

\maketitle

\begin{abstract} We show the existence of regular combinatorial objects
which previously were not known to exist. Specifically, for a wide
range of the underlying parameters, we show the existence of
non-trivial orthogonal arrays, $t$-designs, and $t$-wise
permutations.  In all cases, the sizes of the objects are optimal up
to polynomial overhead. The proof of existence is probabilistic.  We
show that a randomly chosen structure has the required properties
with positive yet tiny probability. Our method allows also to give
rather precise estimates on the number of objects of a given size
and this is applied to count the number of orthogonal arrays,
$t$-designs and regular hypergraphs. The main technical ingredient
is a special local central limit theorem for suitable lattice random
walks with finitely many steps.
\end{abstract}

\newpage
\tableofcontents
\newpage

\section{Introduction}\label{sec:intro}

We introduce a new framework for establishing the existence of
regular combinatorial structures, such as orthogonal arrays,
$t$-designs and $t$-wise permutations.  Let $B$ be a finite set and
let $V$ be a vector space of functions from $B$ to the rational
numbers $\Q$.  We study when there is a small subset $T\subset B$
satisfying
\begin{equation} \label{eq:prob_def}
    \frac{1}{|T|}\sum_{t\in T} f(t) = \frac{1}{|B|}\sum_{b\in B}
    f(b)\quad\text{for all $f$ in $V$.}
\end{equation}
In probabilistic terminology, equation \eqref{eq:prob_def} means that if
$t$ is a uniformly random element in $T$ and $b$ is a uniformly random
element in $B$ then
\begin{equation}\label{eq:prob_def_prob}
    \Ex_{t \in T}[f(t)] = \Ex_{b \in B}[f(b)]\quad\text{for all $f$ in $V$,}
\end{equation}
where $\Ex$ denotes expectation. Of course, \eqref{eq:prob_def}
holds trivially when $T=B$. Our goal is to find conditions on $B$
and $V$ that yield a small subset $T$ that satisfies
\eqref{eq:prob_def}, where in our situations, small will mean
polynomial in the dimension of $V$. We remark that in many natural
problems one might encounter a vector space $V$ over $\R$ or $\C$
instead. However, since \eqref{eq:prob_def} is a rational equation,
we can always reduce to the case of rational vector spaces.

A more concrete realization of the above framework is given by the
following problem. Let $\phi$ be a matrix with rational entries
whose rows are indexed by a finite set $B$ and whose columns are
indexed by a finite set $A$. When is there a small subset $T\subset
B$ such that the average of the rows indexed by $T$ equals the
average of all rows? This problem is a special case of the above
framework with $V$ being the subspace spanned by the columns of
$\phi$. In fact, the general framework can always be reduced to such
a problem by choosing a basis $(\phi_a)$, $a\in A$, of $V$ and
defining the matrix $\phi$ by $\phi(b,a)=\phi_a(b)$.

Our main theorem, Theorem~\ref{thm:main}, gives sufficient
conditions for the existence of a small subset $T$ satisfying
\eqref{eq:prob_def}. A second theorem,
Theorem~\ref{thm:count_solutions}, provides sharp estimates on the
number of such subsets of a given size. We apply the theorems to
establish results in three interesting cases of the general
framework: orthogonal arrays, $t$-designs, and $t$-wise
permutations.  These are defined and discussed in more detail in the
next sections. Our methods solve an open problem, whether there
exist non-trivial $t$-wise permutations for every $t$. They
strengthen Teirlinck's theorem \cite{Teirlinck87}, which was the
first theorem to show the existence of $t$-designs for every $t$.
And they improve existence results for orthogonal arrays, when the
size of the alphabet is divisible by many distinct primes. Moreover,
in all three cases considered, we show the existence of a structure
whose size is optimal up to polynomial overhead. In addition, we
provide sharp estimates for the number of orthogonal arrays and
$t$-designs of a given size. As a special case, these yield
estimates for the number of regular hypergraphs of a given degree.

Our approach to the problem is via probabilistic arguments. In
essence, we prove that a random subset of $B$ satisfies equation
\eqref{eq:prob_def} with positive, albeit tiny, probability. Thus
our method is one of the few known methods for showing existence of
rare objects. This class includes such other methods as the Lov\'asz
local lemma \cite{ErdosLovasz73} and Spencer's ``six deviations
suffice'' method \cite{Spencer1985}. However, our method does not
rely on these previous approaches.  Instead, our technical
ingredient is a special version of the (multi-dimensional) local
central limit theorem with finitely many steps. We cannot use any
``off the shelf'' local central limit theorem, not even one enhanced
by a Berry-Esseen-type estimate of the rate of convergence, since
the number of steps of our random walk is small compared to the
dimension of the space in which it takes its values. Instead, we
prove the local central limit theorem that we need directly using
Fourier analysis. Section~\ref{sec:proof_overview} gives an overview
of our approach.

We also mention that efficient randomized algorithm versions of the
Lov\'asz local lemma \cite{Moser2009, MoserTardos2010} and Spencer's
method \cite{Bansal2010,LovettMeka12} have recently been found.  Relative to
these new algorithms, the objects that they produce are no longer rare.
Our method is the only one that we know that shows the existence of rare
combinatorial structures, which are still rare relative to any known,
efficient, randomized algorithm.

\subsection{Orthogonal arrays}
\label{sec:OAs}

Here and in the rest of the paper we use the notation
$[m]:=\{1,\ldots, m\}$. A subset $T \subset [q]^n$ is an
\emph{orthogonal array of alphabet size $q$, length $n$ and strength
$t$} if it yields all strings of length $t$ with equal frequency if
restricted to any $t$ coordinates. In other words, for any distinct
indices $i_1,\ldots,i_t \in [n]$ and any (not necessarily distinct)
values $v_1,\ldots,v_t \in [q]$,
\begin{equation}\label{eq:OA_def}
\left|\{x \in T: x_{i_1}=v_1,\ldots,x_{i_t}=v_t\}\right| = q^{-t} |T|.
\end{equation}
Equivalently, choosing $x=(x_1,\ldots,x_n) \in T$ uniformly, the
distribution of each coordinate of $x$ is uniform in $[q]$ and every
$t$ coordinates of $x$ are independent ($x$ is $t$-wise
independent). For an introduction to orthogonal arrays
see~\cite{orthogonalarrays}.

Orthogonal arrays fit into our general framework as follows. We take $B$
to be $[q]^n$ and $V$ to be the space spanned by all functions of the form
\begin{equation*}
  f_{(I,v)}(x_1,\ldots, x_n) = \begin{cases}
    1& x_i=v_i\text{ for all $i\in I$}\\0&\text{Otherwise}
  \end{cases},
\end{equation*}
with $I\subset[n]$ a subset of size $t$ and $v\in[q]^I$.  With this choice,
a subset $T\subset B$ satisfying \eqref{eq:prob_def} is precisely an
orthogonal array of alphabet size $q$, length $n$ and strength $t$.

It is well known that if $T \subset [q]^n$ is an orthogonal array of
strength $t$ then $|T| \ge \left(\frac{cqn}{t}\right)^{t/2}$ for
some universal constant $c>0$ (see, e.g.,~\cite{Rao73}). Matching
constructions of size $|T|\le q^{c t}\left(\frac{n}{t}\right)^{c_q
t}$ are known, however, as these rely on finite field properties,
the constant $c_q$ generally tends to infinity with the number of
distinct prime factors of $q$. Our technique provides the first
upper bound on the size of orthogonal arrays in which the constant
in the exponent is independent of $q$. Here and below, a universal
constant is a constant independent of all other parameters.

\begin{thm}[Existence of orthogonal arrays]\label{thm:OA_first_version}
For all integers $q\ge 2$, $n\ge 1$ and $1\le t\le n$ there exists
an orthogonal array $T$ of alphabet size $q$, length $n$ and
strength $t$ satisfying $|T| \le \left(\frac{cqn}{t}\right)^{ct}$
for some universal constant $c>0$.
\end{thm}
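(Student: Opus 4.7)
The plan is to cast the existence of an orthogonal array into the general framework of the paper and then appeal to the abstract main theorem, Theorem~\ref{thm:main}. With $B=[q]^n$ and $V$ the span of the indicators $f_{(I,v)}$ for $|I|=t$ and $v\in[q]^I$, a subset $T\subset B$ obeys the averaging condition \eqref{eq:prob_def} exactly when $T$ is an orthogonal array of strength $t$, as already observed in the excerpt. The dimension $d:=\dim V$ satisfies $d\le (Cqn/t)^t$ for a universal $C$, since a basis can be indexed by $(S,v)$ with $|S|\le t$ and $v\in([q]\setminus\{1\})^S$. As Theorem~\ref{thm:main} will produce $T$ of size polynomial in $d$ whenever its hypotheses are met, the target bound $|T|\le (cqn/t)^{ct}$ follows automatically, and the real work is to verify those hypotheses for this particular $(B,V)$.

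To see what needs verifying, encode each $b\in B$ as the $\{0,1\}$-vector $v_b=(f_{(I,v)}(b))_{(I,v)}\in\Z^d$ and draw $b_1,\ldots,b_N$ i.i.d.\ uniformly from $B$. With $|T|=N$ and $q^t\mid N$ so the target is integral, the orthogonal-array condition is exactly the event that the random walk $S_N=\sum_i v_{b_i}$ hits the deterministic point $N|B|^{-1}\sum_{b\in B} v_b$. What Theorem~\ref{thm:main} delivers, and what I would prove here, is a local central limit theorem: for $N$ polynomial in $d$, the probability that $S_N$ hits the target is positive and behaves like $c_\Sigma N^{-d/2}$, where $\Sigma$ is the covariance of a single step. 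The strategy is Fourier-analytic: write the hitting probability as an integral over the dual torus of the $N$-th power of the characteristic function of a step, and split into a central region, where a Gaussian approximation contributes the leading $N^{-d/2}$ factor, and a tail region, which must be shown to contribute only a lower-order correction.

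The main obstacle is the tail bound in this regime. Off-the-shelf local CLTs and Berry--Esseen bounds are useless here because $N$ is only polynomial, not exponential, in $d$. Instead one must exploit combinatorial structure of the $v_b$'s and prove that for every frequency $\xi$ in the dual torus bounded away from the reciprocal lattice, a positive fraction of the $v_b$'s yield $\langle v_b,\xi\rangle\not\equiv 0\pmod{2\pi}$. For orthogonal arrays this is controllable coordinatewise: comparing a uniform $b$ with $b'$ obtained by flipping a single coordinate $i$ perturbs only those $(I,v)$-blocks of $v_b$ with $i\in I$, so if $\xi$ is nontrivial on some $(I,v)$-block then a constant fraction of such flips produce a nontrivial shift in $\langle v_b,\xi\rangle$. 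Together with divisibility considerations (choosing $N$ to be a suitable multiple of $q^t$ so the target lies in the lattice generated by the steps), this yields the required tail bound and hence the positive hitting probability, completing the proof.
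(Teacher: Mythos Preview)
Your opening paragraph is right: the route is to verify the hypotheses of Theorem~\ref{thm:main} for $(B,V)=([q]^n,\,\Span\{f_{(I,v)}\})$ and then invoke it as a black box. But after that paragraph you never actually verify those hypotheses. Instead, paragraphs two and three sketch a \emph{direct} local central limit theorem for this particular walk---Fourier inversion, central window, tail region, coordinate-flipping heuristic. That is not ``verifying the hypotheses of Theorem~\ref{thm:main}''; it is re-proving Theorem~\ref{thm:main} from scratch in this special case, and your sketch is far too vague to succeed on its own (in particular, the flip argument does not by itself give the uniform quantitative tail bound you need over the whole dual torus, nor does it handle frequencies near nontrivial reciprocal-lattice points).

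The paper's proof really does consist of checking the five conditions of Theorem~\ref{thm:main}, and the substantive one you skip entirely is the boundedness of $V^\perp$ in $\ell_1$. Concretely: symmetry comes from the transitive action of translations $b\mapsto b+x\pmod q$; constant functions lie in $V$ trivially; the $\{0,1\}$-valued $f_{(I,v)}$ give $c_2=1$; and the divisibility constant is exactly $q^t$ once one shows $\LL(\phi)=\Z^A$. The real work is a \emph{local decodability} construction: for each basis index $a=(I,v)$ one writes down an explicit signed combination $\gamma^a\in\Z^B$ with $\|\gamma^a\|_1\le 2^t$ and $\phi(\gamma^a)=e^a$ (an inclusion--exclusion over subsets $J\subset I$ applied to the points $b^{(J,v|_J)}$ which have value $q$ outside $J$). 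Via Claim~\ref{claim:LDC_implies_LDPC} this yields a $c_3$-bounded integer basis for $V^\perp$ in $\ell_1$ with $c_3\le 2(2eqn/t)^t$, which is precisely the ingredient that drives the tail bound inside the proof of Theorem~\ref{thm:main}. Your proposal contains no analogue of this step, so as written it is incomplete.
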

Moreover, we provide a rather precise count of the number of
orthogonal arrays with given parameters.
\begin{thm}[Number of orthogonal arrays]\label{thm:OA_count}
There exists a constant $c>0$ such that for all integers $q\ge 2$,
$n\ge 1$ and $1\le t\le n$ and for all $N$ satisfying that $N$ is a
multiple of $q^t$ and $\min(N, q^n-N)\ge
\left(\frac{cqn}{t}\right)^{ct}$, we have that the number of
orthogonal arrays $T$ of alphabet size $q$, length $n$, strength $t$
and $|T|=N$ equals
\begin{equation*}
\frac{q^{-\frac{1}{2}n\binom{n-1}{t}(q-1)^t}}{(2\pi
p(1-p))^{\frac{1}{2}\sum_{i=0}^t
\binom{n}{i}(q-1)^i}p^N(1-p)^{q^n-N}} (1+\delta)
\end{equation*}
where $p:=\frac{N}{q^n}$ and $|\delta|\le
\frac{\left(\frac{cqn}{t}\right)^{ct}}{\sqrt{\min(N,q^n-N)}}$.
\end{thm}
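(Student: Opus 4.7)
The plan is to convert the counting problem into a point probability for a lattice random walk, and then evaluate that probability by a local central limit theorem (LCLT). Set $p := N/q^n$ and let $X_b$, $b\in B=[q]^n$, be independent $\text{Bernoulli}(p)$ random variables; let $T := \{b : X_b = 1\}$. Since conditionally on $|T|=N$ all size-$N$ subsets are equally likely,
\begin{equation*}
\#\{\text{OAs of size }N\}\cdot p^N(1-p)^{q^n-N} \;=\; \Pr\bigl[T\text{ is an OA of size }N\bigr].
\end{equation*}
Let $V$ be as in Section~\ref{sec:OAs}, of dimension $d := \dim V = \sum_{i=0}^t\binom{n}{i}(q-1)^i$, and let $\phi_b\in\Z^d$ denote the evaluations at $b$ of a convenient integer-valued generating family for $V$. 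The orthogonal-array condition, combined with $|T|=N$, is then precisely $S := \sum_b X_b \phi_b = s^\star$ for a specific lattice point $s^\star$, whose integrality is guaranteed by $q^t \mid N$.

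Next I would apply the LCLT to $S$. The centered walk $S-\Ex[S]$ has covariance $\Sigma = p(1-p)\,M$ with $M = \sum_b (\phi_b - \bar\phi)(\phi_b - \bar\phi)^\top$. The LCLT asserts
\begin{equation*}
\Pr[S = s^\star] \;=\; \frac{\operatorname{covol}(\LL)}{(2\pi)^{d/2}\sqrt{\det\Sigma}}\,(1+\delta),
\end{equation*}
where $\LL$ is the lattice of attainable values of the centered walk. Peeling off the $p$-dependence via $\det\Sigma = (p(1-p))^d \det M$ produces the Gaussian factor $(2\pi p(1-p))^{-d/2}$. The remaining $p$-independent geometric ratio $\operatorname{covol}(\LL)/\sqrt{\det M}$ is purely combinatorial, and using the product structure of $B = [q]^n$ together with the orthogonality of the low-weight characters of $(\Z/q\Z)^n$ (which diagonalise $M$ and reveal $M$ as essentially $q^n$ times the identity on nontrivial directions), it evaluates to $q^{-\frac{1}{2}n\binom{n-1}{t}(q-1)^t}$. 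Dividing the asymptotic by $p^N(1-p)^{q^n-N}$ yields exactly the formula in Theorem~\ref{thm:OA_count}.

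The principal obstacle is proving the LCLT with the claimed quantitative error $|\delta|\le (cqn/t)^{ct}/\sqrt{\min(N,q^n-N)}$. Since the number of summands $|B|=q^n$ is only polynomial in $d$ in the parameter regime of interest, classical multidimensional LCLTs — even with Berry-Esseen rates — do not apply directly. Instead I would proceed via Fourier inversion on the dual lattice $\LLp$:
\begin{equation*}
\Pr[S = s^\star] \;=\; \operatorname{covol}(\LL)\int_{\R^d/\LLp} e^{-2\pi i \ip{\xi, s^\star}} \prod_b \bigl((1-p) + p\, e^{2\pi i \ip{\xi,\phi_b}}\bigr)\, d\xi,
\end{equation*}
splitting the torus $\R^d/\LLp$ into a small-$\xi$ regime, where a Taylor expansion of the log characteristic function reproduces the Gaussian main term with the correct covariance, and a bulk regime, where one must show $\bigl|\prod_b ((1-p)+p\,e^{2\pi i\ip{\xi,\phi_b}})\bigr|$ is exponentially small. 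This large-deviation estimate is the crux of the whole paper; it exploits that the characters of $(\Z/q\Z)^n$ of weight at most $t$ are well spread on $B$, so that any $\xi$ not near $\LLp$ forces a substantial fraction of the inner products $\ip{\xi,\phi_b}$ to sit far from integers, producing a uniform multiplicative loss per such $b$. Carefully accounting for the lattice quantization and tracking constants through this splitting produces the quantitative LCLT (in the form of Theorem~\ref{thm:count_solutions}), and specialisation to OAs completes the proof.
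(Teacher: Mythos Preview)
Your high-level strategy matches the paper's exactly: the Bernoulli model, the identification of orthogonal arrays of size $N$ with the event $X=\Ex[X]$, Fourier inversion on the dual lattice, and the near-zero/bulk split of the characteristic function. Two implementation points differ from the paper and deserve comment.

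First, a minor slip: the covariance of $S=\sum_b X_b\phi_b$ is $p(1-p)\sum_b \phi_b\phi_b^\top$, i.e.\ $p(1-p)$ times the \emph{uncentered} Gram matrix $\phi^t\phi$, not the centered $M$ you wrote. Centering the walk $S-\Ex[S]$ does not center the step vectors $\phi_b$.

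Second, and more substantively, the paper neither computes $\rho(V)$ via character orthogonality nor runs the bulk Fourier estimate through a direct ``low-weight characters are well spread'' argument. It works throughout in a specific integer basis $(\phi_a)_{a\in A}$ indexed by pairs $(I,v)$ with $|I|\le t$ and $v\in[q-1]^I$, and shows: (i) $\LL(\phi)=\Z^A$ via an explicit local-decodability construction (Claim~\ref{cl:OA_local_decodability}), so $\det(\LL(\phi))=1$; and (ii) $\det(\phi^t\phi)=q^{n\binom{n-1}{t}(q-1)^t}$ by a recursive block-determinant computation (Claim~\ref{cl:OA_det_calc}), not by diagonalisation. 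The bulk estimate is then handled abstractly: local decodability with bound $2^t$ feeds into Claim~\ref{claim:LDC_implies_LDPC} to give a $c_3$-bounded integer $\ell_1$ basis for $V^\perp$, while the transitive $(\Z/q\Z)^n$-symmetry yields local correctability (Lemma~\ref{lemma:phi_is_locally_correctable}) and hence the tameness Lemma~\ref{lemma:phi_FC_tame}. These two ingredients drive Lemma~\ref{lemma:estimate_fourier_far_L}: if $\theta\in D$ is not close to the dual lattice in the $R$-norm, then either some $r_b\ge 1/c_3$ (whence tameness forces \emph{many} $r_b$ to be large), or all $r_b<1/c_3$ (whence the short $V^\perp$ basis forces the integer parts $(n_b)$ to lie in $V$, placing $\theta$ near a dual-lattice point after all). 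Your character-theoretic route may well be workable for orthogonal arrays specifically, but it is not what the paper does; the paper's abstract mechanism via $V^\perp$-boundedness and symmetry is precisely what lets the same LCLT (Theorem~\ref{thm:count_solutions}) also handle $t$-designs and $t$-wise permutations.
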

The case $q=2$ of this theorem has appeared in the work of Canfield,
Gao, Greenhill, McKay and Robinson \cite{CGGMR2010} with slightly
more restrictive assumptions. All other cases of the theorem appear
to be new.

As a final remark we note that any orthogonal array $T$ of alphabet
size $q$ and strength $t$ must satisfy that $|T|$ is a multiple of
$q^t$ by \eqref{eq:OA_def}. Hence the essential restriction on $N$
in the last theorem is only that $N$ be bounded away from $0$ and
$q^n$.

\subsection{Designs}
\label{sec:designs}

A (simple) \emph{$t$-$(v,k,\lambda)$ design}, or $t$-design for
short, is a family of distinct subsets of $[v]$, where each set is
of size $k$, such that each $t$ elements belong to exactly $\lambda$
sets. In other words, denoting by $\choosesq{v}{k}$ the family of
all subsets of $[v]$ of size $k$, a set $T \subset \choosesq{v}{k}$
is a $t$-design if for any distinct elements $i_1,\ldots,i_t \in
[v]$,
\begin{equation}\label{eq:intro:design_def}
\left|\{s \in T: i_1,\ldots,i_t \in s\}\right|
    = \lambda.
\end{equation}
It follows that $\lambda$ satisfies the relation
\begin{equation}\label{eq:intro:lambda_rel}
  \lambda = \frac{\binom{k}{t}}{\binom{v}{t}} |T|.
\end{equation}
For an introduction to combinatorial designs see~\cite{Hand07}.

Our general framework includes $t$-designs as follows. We take $B$ to
be $\choosesq{v}{k}$ and $V$ to be the space spanned by all functions of
the form
\begin{equation*}
  f_{a}(b) = \begin{cases}
    1& a\subset b\\0&\text{Otherwise}
  \end{cases},
\end{equation*}
with $a\in \choosesq{v}{t}$. With this choice, a subset $T\subset B$
satisfying \eqref{eq:prob_def} is precisely a simple
$t$-$(v,k,\lambda)$ design, with $\lambda$ given by
\eqref{eq:intro:lambda_rel}.

Although $t$-designs have been investigated for many years, the
basic question of existence of a design for a given set of
parameters $t,v,k$ and $\lambda$ remains mostly unanswered unless
$t$ is quite small (see remark below for recent progress). The case
$t=2$ is known as a block design and much more is known about it
than for larger $t$. Explicit constructions of $t$-designs for $t
\ge 3$ are known for various specific constant settings of the
parameters (e.g. $5$-$(12,6,1)$ design). The breakthrough result of
Teirlinck~\cite{Teirlinck87} was the first to establish the
existence of non-trivial $t$-designs for $t\ge 7$. In Teirlinck's
construction, $k=t+1$ and $v$ satisfies congruences that grow very
quickly as a function of $t$. Other sporadic and infinite examples
have been found since then (see~\cite{Hand07} or \cite{Magliveras09}
and the references within), however, the set of parameters which
they cover is still very sparse.

It follows from \eqref{eq:intro:lambda_rel} that any
$t$-$(v,k,\lambda)$ design $T$ has size $|T|=\lambda \binom{v}{t}/
\binom{k}{t} \ge \left(\frac{v}{k}\right)^{t}$. Moreover, it can be
shown \cite{RayWilson75} that whenever $v \ge k+t$ the inequality
$|T| \ge {v \choose \lfloor t/2 \rfloor} \ge
\left(\frac{v}{t}\right)^{\lfloor t/2 \rfloor}$ holds. Even when
existence has been shown, the designs obtained are often inefficient
in the sense that their size is much larger than these lower bounds
permit. One of the main results of our work is to establish the
existence of efficient $t$-designs for a wide range of parameters.

\begin{thm}[Existence of $t$-designs]\label{thm:designs_first_version}
For all integers $v\ge 1$, $1\le t\le v$ and $t \le k \le v$ there
exists a $t$-$(v,k,\lambda)$ design whose size is at most
$\left(\frac{cv}{t}\right)^{c t}$ for some universal constant $c>0$.
\end{thm}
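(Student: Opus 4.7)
The plan is to specialise the paper's general framework exactly as in the paragraph preceding the statement: take $B=\binom{[v]}{k}$ and let $V$ be the rational span of the functions $f_a(b)=\mathbf{1}\{a\subset b\}$ for $a\in\binom{[v]}{t}$. A subset $T\subset B$ satisfies the averaging identity \eqref{eq:prob_def} for this $V$ if and only if the count $|\{b\in T:a\subset b\}|$ depends only on $|a|=t$, which by \eqref{eq:intro:design_def}--\eqref{eq:intro:lambda_rel} is exactly the condition that $T$ be a simple $t$-$(v,k,\lambda)$ design. Since $\dim V\le\binom{v}{t}\le(ev/t)^t$, it then suffices to invoke the general existence theorem (Theorem~\ref{thm:main}) for this choice of $(B,V)$ and read off a bound that is polynomial in $\dim V$.

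The bulk of the work is to verify the hypotheses of Theorem~\ref{thm:main} for this $(B,V)$. The underlying local central limit theorem requires two ingredients: (i) an integer lattice structure on $\{f_a\}$, which is automatic because the $f_a$ are $\{0,1\}$-valued; and (ii) a non-degeneracy bound on the covariance matrix $\Sigma$ of the random vector $(f_a(b))_a$ when $b$ is drawn uniformly from $B$. Here $\Sigma$ lies in the Bose--Mesner algebra of the Johnson scheme $J(v,k)$ and is therefore simultaneously diagonalised by the Johnson eigenspaces $E_0,E_1,\ldots,E_t$. Its eigenvalues on these spaces are explicit hypergeometric expressions in $v,k,t$ that admit a uniform lower bound of the form $(v/t)^{-O(t)}$; this is precisely the inverse-polynomial-in-$\dim V$ non-degeneracy that Theorem~\ref{thm:main} asks for, and it is what ensures the Fourier transform of the step distribution is well-separated from $1$ away from the trivial character.

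The main obstacle, beyond this spectral input, will be to match the divisibility constraints on $|T|$ imposed by \eqref{eq:intro:lambda_rel} against the sub-lattice of $\Z^{\binom{[v]}{t}}$ on which the local central limit theorem places positive mass. One identifies the $\Z$-lattice spanned by the increment vectors $(f_a(b))_a$, $b\in B$, and checks that it coincides with the lattice of count vectors that can arise as $\bigl(|\{b\in T:a\subset b\}|\bigr)_a$ for some $T\subset B$; the design target $(\lambda,\lambda,\ldots,\lambda)$ with $\lambda=\binom{k}{t}N/\binom{v}{t}$ then lies in this lattice exactly when $N$ satisfies the classical integrality conditions for a $t$-$(v,k,\lambda)$ design. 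Picking $N$ of order $(cv/t)^{ct}$ on this admissible sub-lattice and inside the window where Theorem~\ref{thm:main} applies, one obtains a random $T\subset B$ of size $N$ that hits the design atom with positive probability, yielding the advertised $t$-design.
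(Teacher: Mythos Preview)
Your setup is correct and matches the paper's: take $B=\binom{[v]}{k}$, let $V$ be the span of the $f_a$, and invoke Theorem~\ref{thm:main}. The gap is that you have misidentified the hypotheses you need to check. Theorem~\ref{thm:main} does \emph{not} ask for a lower bound on the eigenvalues of the covariance matrix; its five conditions are divisibility, boundedness of $V$ in $\ell_\infty$, boundedness of $V^\perp$ in $\ell_1$, transitive symmetry, and constant functions. Your item (ii) about Johnson-scheme eigenvalues and ``inverse-polynomial non-degeneracy'' is not one of these conditions, and no amount of spectral control on $\Sigma$ will by itself discharge them. (Eigenvalues of $\phi^t\phi$ enter the paper only when computing $\rho(V)$ for the \emph{counting} theorem, not for existence.)

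The condition you are missing entirely is the hardest one: that $V^\perp$ has a $c_3$-bounded integer basis in $\ell_1$, with $c_3$ of size roughly $(v/t)^{O(t)}$. This is a combinatorial/lattice statement---short integer relations among the rows of $\phi$---and it is what the paper uses (via Claim~\ref{claim:decode_nb}) to control the Fourier transform away from the dual lattice; a spectral gap does not substitute for it. The paper establishes this by the local decodability route of Section~\ref{sec:local_decodability}: for each $a\in\binom{[v]}{t}$ it exhibits an explicit signed sum $\gamma_{a,u}$ of $k$-sets inside a fixed $(k+t)$-set $u\supset a$ with $\phi(\gamma_{a,u})$ a multiple of $e_a$ and $\|\gamma_{a,u}\|_1\le (O(k/t))^t$, then invokes Claim~\ref{claim:LDC_implies_LDPC}. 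For divisibility the paper does not argue abstractly as you suggest but quotes the Wilson/Graver--Jurkat description of $\mathcal L(\phi)$ and bounds $c_1\le\binom{v}{t}\,\mathrm{lcm}\{\binom{t}{s}\}\le(4ev/t)^t$. Without supplying the $V^\perp$ bound (or an equivalent), your proposal does not yet constitute a proof.
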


Our work also provides a rather precise count of the number of
$t$-designs of a given size with given parameters. To state this
count precisely, we recall the well-known observation that if $T$ is
a $t$-$(v,k,\lambda)$ design then for every $1\le s\le t$, each
subset of size $s$ in $[v]$ is covered by exactly
\begin{equation*}
  \lambda_s := \frac{\binom{k}{s}}{\binom{v}{s}} |T|
\end{equation*}
sets in $T$. In particular, $(\lambda_s)$, $1\le s\le t$ must be
integers. Our next theorem, in addition to estimating the number of
designs, implies that if $|T|$ (or equivalently $\lambda$) is
sufficiently large, these integrality conditions suffice for the
existence of $t$-$(v,k,\lambda)$ designs.
\begin{thm}[Number of $t$-designs]\label{thm:counting_designs}
There exists a constant $c>0$ such that for all integers $v\ge 1$,
$1\le t\le v$ and $t \le k \le v$ and for all $N$ satisfying that
the numbers
\begin{equation*}
  \frac{\binom{k}{s}}{\binom{v}{s}} N \text{ are integers for $1\le s\le
  t$}
\end{equation*}
and satisfying that $\min(N, \binom{v}{k}-N)\ge
\left(\frac{cv}{t}\right)^{c t}$, we have that the number of
$t$-$(v,k,\lambda)$ designs $T$ of size $|T|=N$ equals
\begin{equation*}
\frac{1}{(2\pi
p(1-p))^{\frac{1}{2}\binom{v}{t}}p^N(1-p)^{\binom{v}{k}-N}}\left(\prod_{s=0}^t
\left[\frac{\binom{k-s}{k-t}}{\binom{v-t-s}{k-t}}\right]^{\frac{1}{2}\left(\binom{v}{s}-\binom{v}{s-1}\right)}\right)
(1+\delta)
\end{equation*}
where $p:=\frac{N}{\binom{v}{k}}$, $|\delta|\le
\frac{\left(\frac{cv}{t}\right)^{c
t}}{\sqrt{\min(N,\binom{v}{k}-N)}}$ and $\binom{v}{-1}$ is defined
to be 0.
\end{thm}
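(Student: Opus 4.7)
The plan is to derive Theorem~\ref{thm:counting_designs} as a direct application of the general counting theorem (Theorem~\ref{thm:count_solutions}) to the $t$-design framework from Section~\ref{sec:designs}. I will take $B = \binom{[v]}{k}$ and $V = \Span\{f_a : a \in \binom{[v]}{t}\}$ with $f_a(b) = \one{a \subset b}$, so that a subset $T\subset B$ with $|T|=N$ satisfies \eqref{eq:prob_def} if and only if it is a $t$-$(v,k,\lambda)$ design. One first verifies that $\dim V = \binom{v}{t}$ (the $f_a$ are linearly independent) and that the constant function lies in $V$ (since $\sum_a f_a \equiv \binom{k}{t}$), so that the $|T|=N$ constraint is implied by the design constraints. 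With $p := N/\binom{v}{k}$, including each $b\in B$ independently with probability $p$, the number of designs of size $N$ equals $\Pr[T \text{ is a design of size } N]/[p^N(1-p)^{\binom{v}{k}-N}]$.

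A local central limit theorem then estimates the numerator. Letting $\phi_b = (f_a(b))_a \in \Z^{\binom{v}{t}}$, $X_b \sim \text{Bernoulli}(p)$ independent, and $S = \sum_b X_b \phi_b$, the event ``$T$ is a design of size $N$'' becomes $\{S = \lambda \cdot \mathbf{1}\}$. Theorem~\ref{thm:count_solutions} will supply $\Pr[S = s_0] \approx |\det L|/\sqrt{(2\pi)^{\binom{v}{t}} \det \Sigma}$, where $\Sigma = p(1-p)\,M$ is the covariance and $L\subset\Z^{\binom{v}{t}}$ is the lattice generated by $\{\phi_b\}_{b\in B}$; explicitly,
\[ M_{a_1,a_2} = \sum_b f_{a_1}(b) f_{a_2}(b) = \binom{v - 2t + |a_1\cap a_2|}{k - 2t + |a_1\cap a_2|}. \]
Since $M$ lies in the Bose--Mesner algebra of the Johnson scheme $J(v,t)$, it is $S_v$-equivariant; the lattice $L$ is likewise $S_v$-stable.

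The substantive step is to diagonalize $M$ and compute $|\det L|$ via the classical isotypic decomposition $\Q^{\binom{[v]}{t}} = \bigoplus_{s=0}^t W_s$, where $W_s$ is the irreducible $S_v$-module associated to the partition $(v-s,s)$, of dimension $\binom{v}{s}-\binom{v}{s-1}$. By Schur's lemma, $M$ acts as a scalar $\mu_s$ on each $W_s$; one computes $\mu_s$ by evaluating $M$ on an explicit harmonic (highest-weight) vector in $W_s$, and a parallel representation-theoretic computation handles the lattice-volume contribution from each isotypic component. Combining these one obtains the identity
\[ \frac{|\det L|}{\sqrt{\det M}} = \prod_{s=0}^t \left[\frac{\binom{k-s}{k-t}}{\binom{v-t-s}{k-t}}\right]^{\frac{1}{2}(\binom{v}{s}-\binom{v}{s-1})}, \]
and substituting into the LCLT density, together with $\det\Sigma = (p(1-p))^{\binom{v}{t}}\det M$, yields the stated formula.

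The main obstacles are twofold. First, I must verify that the integrality hypotheses $\binom{k}{s}N/\binom{v}{s}\in\Z$ for $1\le s\le t$ are exactly what puts the target vector $\lambda\cdot\mathbf{1}$ into the relevant coset of $L$; without this, $\Pr[S=s_0]=0$ and the formula cannot hold. Second, the explicit computation of the $\mu_s$ and of the lattice volume on each isotypic piece is the combinatorially delicate core, responsible for the somewhat miraculous matching with the product formula above. The quantitative error $|\delta|\le (cv/t)^{ct}/\sqrt{\min(N,\binom{v}{k}-N)}$ then follows directly from the rate of convergence supplied by Theorem~\ref{thm:count_solutions}.
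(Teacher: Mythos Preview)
Your approach is essentially the paper's: apply Theorem~\ref{thm:count_solutions} to the $t$-design framework and compute $\rho(V)=\det(\LL(\phi))/\sqrt{\det(\phi^t\phi)}$. Two points deserve attention.

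First, you omit the verification that $V^{\perp}$ has a $c_3$-bounded integer basis in $\ell_1$, which is one of the hypotheses of Theorem~\ref{thm:count_solutions} and is not automatic. The paper spends real effort here (Claims~\ref{claim:designs_gamma_vecs} and~\ref{claim:LDC_implies_LDPC}): it exhibits explicit integer vectors $\gamma_{a,u}$ with $\phi(\gamma_{a,u})=\frac{k!}{(k-t)!}e_a$ and bounded $\ell_1$ norm, i.e.\ a local decodability witness, and deduces the $V^\perp$ bound from that. Without this ingredient you cannot invoke the counting theorem, and in particular the size threshold $(cv/t)^{ct}$ and the error bound on $\delta$ would have no justification.

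Second, your representation-theoretic plan for $\rho(V)$ is morally right for $\det(\phi^t\phi)$ (Schur's lemma on the Johnson scheme gives the eigenvalues $\mu_s$), and the paper even sketches this route in a remark. But for $\det(\LL(\phi))$ you need the \emph{integral} structure, not just the rational one: knowing that $\phi^t$ acts by the scalar $\binom{k-s}{t-s}$ on each isotypic component $W_s$ over $\Q$ does not by itself yield the Smith normal form over $\Z$. The paper instead cites Wilson's diagonal-form theorem (Theorem~\ref{thm:Wilson_diagonal_form}) for the lattice determinant and Bapat's eigenvalue theorem (Theorem~\ref{thm:design_eigenvalues}) for $\det(\phi^t\phi)$, and combines them directly. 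If you want to carry out the lattice computation via representation theory, you must produce explicit $\Z$-bases for the isotypic pieces in which the map is diagonal with the stated entries---which is precisely the content of Wilson's result.
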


{\bf Remark.} After the first version of this work appeared, Peter
Keevash published his breakthrough work~\cite{keevash2014existence}
proving the existence of Steiner systems, combinatorial designs with
$\lambda=1$, as well as designs with larger values of $\lambda$.
Quite recently, Glock, K{\"u}hn, Lo and
Osthus~\cite{glock2016existence} gave a new proof of this
fundamental result. The techniques used there are quite different
from ours, Keevash employing `randomised algebraic constructions'
and Glock et al. using the method of `iterative absorption'. These
powerful techniques have so far been limited to the case that the
parameters $k, t$ are fixed, or growing very slowly compared with
$v$. Thus, the ranges of parameters covered by these theorems and
our own are mostly complementary: On the one hand, we require that
\begin{equation}\label{eq:inequality_on_lambda}
  \left(\frac{cv}{t}\right)^{c t}\le \lambda\le
\binom{v-t}{k-t}-\left(\frac{cv}{t}\right)^{c t}
\end{equation}
for some absolute constant $c>0$, excluding the fundamental case of
Steiner systems and other small $\lambda$ designs
($\binom{v-t}{k-t}$ is the maximal value for $\lambda$, obtained for
the complete design $T = \choosesq{v}{k}$). On the other hand, our
theorem applies to arbitrary tuples $t-(v,k,\lambda)$ satisfying the
necessary congruence conditions and the inequality
\eqref{eq:inequality_on_lambda}. In addition, our method provides
the exact asymptotics of the number of designs of a given size as
this size grows, whereas the techniques of
\cite{keevash2014existence} and \cite{glock2016existence} have so far yielded less precise control \cite{keevash2015counting}.

\subsubsection{Regular hypergraphs}
A (simple) \emph{$k$-uniform hypergraph on $n$ vertices} is a family
of sets of size $k$ (called edges) on $n$ elements (called
vertices). A hypergraph is \emph{$d$-regular} if each vertex belongs
to exactly $d$ edges. It is straightforward to check that
$d$-regular, $k$-uniform hypergraphs are the same as $1$-$(n,k,d)$
designs.

The existence question for $d$-regular, $k$-uniform hypergraphs is
quite simple, such a hypergraph exists if and only if $nd$ is
divisible by $k$, and in this case any such hypergraph has exactly
$\frac{nd}{k}$ edges. However, counting the number of $d$-regular,
$k$-uniform hypergraphs is a non-trivial problem which has received
much attention in the literature, mainly in the graph case (k=2);
See the paper by McKay and Wormald \cite{MckayWormald} and
references within. In the graph case, until very recently,
approximate counts were known only when the graphs are either
somewhat sparse, or very dense. This gap has now been filled in the
work of Liebenau and Wormald \cite{liebenau2017asymptotic}.

Since $d$-regular, $k$-uniform hypergraphs are a special case of
$t$-designs, we may translate Theorem~\ref{thm:counting_designs} to
obtain a count of such hypergraphs. Our result applies for $k$ and
$d$ sufficiently large and appears to be new.
\begin{thm}
There exists a constant $c>0$ such that for all integers $n\ge 2$,
$1\le k\le n$ and $1\le d\le \binom{n-1}{k-1}$ satisfying that $nd$
is divisible by $k$ and $\min(\frac{nd}{k},
\binom{n}{k}-\frac{nd}{k})\ge n^c$, we have that the number of
$d$-regular, $k$-uniform hypergraphs on $n$ vertices equals
\begin{equation*}
\frac{1}{(2\pi
p(1-p))^{\frac{n}{2}}p^{\frac{nd}{k}}(1-p)^{\binom{n}{k}-\frac{nd}{k}}}\left(\frac{k}{\binom{n-1}{k-1}}\right)^{\frac{1}{2}}\left(\frac{1}{\binom{n-2}{k-1}}\right)^{\frac{1}{2}(n-1)}(1+\delta)
\end{equation*}
where $p:=\frac{nd}{k\binom{n}{k}}$ and $|\delta|\le
\frac{n^c}{\sqrt{\min(\frac{nd}{k},\binom{n}{k}-\frac{nd}{k})}}$.
\end{thm}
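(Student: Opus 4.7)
The plan is to derive this statement as a direct specialization of Theorem~\ref{thm:counting_designs} to the case $t=1$, using the observation (already noted in the paper) that $d$-regular $k$-uniform hypergraphs on $n$ vertices are exactly $1$-$(n,k,d)$ designs. Under this identification we have $v=n$, $t=1$, $\lambda=d$, and by \eqref{eq:intro:lambda_rel} the number of edges is $N:=|T|=\frac{nd}{k}$.

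First I would check that the integrality hypotheses of Theorem~\ref{thm:counting_designs} reduce to the single condition that $k\mid nd$. With $t=1$ the requirement that $\binom{k}{s}N/\binom{v}{s}$ be an integer for $1\le s\le t$ is exactly the requirement that $kN/n=d$ be an integer, which holds automatically, while the derived quantity $\lambda_0=N=nd/k$ is an integer precisely when $k\mid nd$. Second I would observe that the assumption $\min(N,\binom{v}{k}-N)\ge(cv/t)^{ct}$ of Theorem~\ref{thm:counting_designs} becomes $\min(nd/k,\binom{n}{k}-nd/k)\ge(cn)^{c}$, which can be absorbed into a bound of the form $\ge n^{c'}$ by adjusting the universal constant. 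The stated error bound on $\delta$ transforms analogously.

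The main work, such as it is, is then to specialize the main formula. With $t=1$ the prefactor becomes $(2\pi p(1-p))^{-n/2}p^{-N}(1-p)^{-(\binom{n}{k}-N)}$ with $p=\frac{nd}{k\binom{n}{k}}$, matching the statement. The product $\prod_{s=0}^{t}\bigl[\binom{k-s}{k-t}/\binom{v-t-s}{k-t}\bigr]^{(\binom{v}{s}-\binom{v}{s-1})/2}$ collapses to just two factors: the $s=0$ term, using $\binom{k}{k-1}=k$ and the exponent $\tfrac12(\binom{n}{0}-\binom{n}{-1})=\tfrac12$, gives $\bigl(k/\binom{n-1}{k-1}\bigr)^{1/2}$; the $s=1$ term, using $\binom{k-1}{k-1}=1$ and exponent $\tfrac12(\binom{n}{1}-\binom{n}{0})=\tfrac{n-1}{2}$, gives $\bigl(1/\binom{n-2}{k-1}\bigr)^{(n-1)/2}$. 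Multiplying these recovers the displayed formula exactly.

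There is essentially no technical obstacle here beyond careful bookkeeping: the only mildly delicate point is verifying that the integrality hypotheses of Theorem~\ref{thm:counting_designs} for $t=1$ are indeed equivalent to $k\mid nd$ (as opposed to being strictly stronger), since this is what allows the corollary to be stated under the minimal and well-known divisibility hypothesis for the existence of $d$-regular $k$-uniform hypergraphs.
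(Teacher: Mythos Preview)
Your proposal is correct and follows exactly the route the paper takes: the hypergraph-counting theorem is stated in the paper as an immediate translation of Theorem~\ref{thm:counting_designs} with $t=1$, $v=n$, $N=nd/k$, and the paper offers no separate proof beyond that remark. Your bookkeeping on the integrality condition, the absorption of $(cn)^c$ into $n^{c'}$, and the specialization of the product over $s\in\{0,1\}$ is accurate.
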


We mention also a related result on the asymptotic formula for the
number of \emph{binary contingency tables} obtained by Canfield and
McKay \cite{CanfieldMckay}. A binary contingency table is an
$N\times n$ matrix with entries in $\{0,1\}$, all row sums equal to
$k$ and all column sums equal to $d$ (so that necessarily
$N=\frac{nd}{k}$). Such a matrix describes a $d$-regular,
$k$-uniform hypergraph on $n$ vertices with \emph{labelled} edges
and allowing multiple edges. Equivalently, it describes a bipartite
graph with $N$ vertices of degree $k$ on one side and $n$ vertices
of degree $d$ on the other side. When the parameters $(N,n,k,d)$ are
such that most binary contingency tables have all rows distinct, the
number of such tables is close to $N!$ times the number of
$d$-regular, $k$-uniform hypergraphs on $n$ vertices. Thus our
asymptotic formula and the formula of \cite{CanfieldMckay} should be
related in certain ranges of the parameters. We do not develop this
direction here.

\subsection{Permutations}
\label{sec:perms}

A family of permutations $T \subset S_n$ is called a \emph{$t$-wise
permutation} if its action on any $t$-tuple of elements is uniform.
In other words, for any distinct elements $i_1,\ldots,i_t \in [n]$ and
distinct elements $j_1,\ldots,j_t \in [n]$,
\begin{equation}\label{eq:perm_def}
    \left| \{\pi \in T: \pi(i_1)=j_1,\ldots,\pi(i_t)=j_t\} \right| =
    \frac{1}{n(n-1)\cdots(n-t+1)} |T|.
\end{equation}

Our general framework includes $t$-wise permutations as follows. We take
$B=S_n$ and $V$ to be the space spanned by all functions of the form
\begin{equation*}
  f_{(i, j)}(b) = \begin{cases}
    1& b(i_1)=j_1,\ldots,b(i_t)=j_t\\0&\text{Otherwise}
  \end{cases},
\end{equation*}
where $i=(i_1,\ldots,i_t)$ and $j=(j_1,\ldots,j_t)$ are $t$-tuples of
distinct elements in $[n]$. With this choice, a subset $T\subset B$
satisfying \eqref{eq:prob_def} is precisely a $t$-wise permutation.

Equation~\eqref{eq:perm_def} yields a lower bound on the size of
$t$-wise permutations, $|T|\ge n(n-1)\cdots(n-t+1)$. Constructions
of families of $t$-wise permutations matching this lower bound are
known for $t=1,2,3$: the group of cyclic shifts $x \mapsto x+a$
modulo $n$ is a $1$-wise permutation; the group of invertible affine
transformations $x \mapsto ax+b$ over a finite field $\mathbb{F}$
yields a $2$-wise permutation; and the group of M\"{o}bius
transformations $x \mapsto (ax+b)/(cx+d)$ with $ad-bc=1$ over the
projective line $\mathbb{F} \cup \{\infty\}$ yields a $3$-wise
permutation. However, it is known (see, e.g., \cite{Cameron95},
Theorem 5.2) that for $n \ge 25$ and $t \ge 4$ there are no
\emph{subgroups} of $S_n$ which form a $t$-wise permutation (of any
size), other than $S_n$ itself and the alternating group $A_n$.
Moreover, for $t \ge 4$ (and $n$ large enough), no non-trivial
constructions of $t$-wise permutations are known at all
\cite{KaplanNaorReingold05,AlonLovett11}, with the exception of the
recent work \cite{FinucanePeledYaari} which constructs rather large,
but non-trivial, $t$-wise permutations of size $t^{2n}$ for
infinitely many values of $n$ and $t$. One of our main results is
the existence of small $t$-wise permutations for all $n$ and $t$.

\begin{thm}[Existence of $t$-wise
permutations]\label{thm:perms_first_version} For all integers $n\ge 1$
and $1\le t\le n$ there exists a $t$-wise permutation $T\subset S_n$
satisfying $|T| \le (cn)^{c t}$ for some universal constant $c>0$.
\end{thm}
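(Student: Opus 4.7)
The plan is to derive this theorem from Theorem~\ref{thm:main} applied to the specialization described immediately before the statement: $B = S_n$, and $V \subset \Q^{S_n}$ the span of the indicators $f_{(i,j)}$ where $i, j$ range over ordered $t$-tuples of distinct elements of $[n]$. The dimension of $V$ is bounded by the number of generators, $(n!/(n-t)!)^2 \le n^{2t}$, so a bound of the shape $|T| \le (cn)^{ct}$ is polynomial in $\dim V$ and in line with what Theorem~\ref{thm:main} produces for orthogonal arrays and designs. I would take the target size $N$ to be the smallest multiple of $n(n-1)\cdots(n-t+1)$ exceeding the polynomial-in-$\dim V$ threshold that Theorem~\ref{thm:main} requires; this divisibility is needed already so that the right-hand side of \eqref{eq:perm_def} is a non-negative integer.

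Verifying the hypotheses of Theorem~\ref{thm:main} is the technical content. By analogy with the orthogonal-array and design applications, those hypotheses comprise (i) boundedness of the entries of the matrix $\phi(b,(i,j)) := f_{(i,j)}(b)$, which is automatic since $\phi$ is $\{0,1\}$-valued; (ii) a lattice-integrality condition on the columns of $\phi$, supplied by the integrality of $\phi$ together with the fact that the constant function $\mathbf{1}$ lies in $V$; and (iii) a quantitative non-degeneracy bound on the covariance operator
\[
  \Sigma \;:=\; \Ex_{\pi \in S_n}\!\bigl[f(\pi) f(\pi)^{\top}\bigr]
         - \Ex_{\pi \in S_n}\!\bigl[f(\pi)\bigr]\,\Ex_{\pi \in S_n}\!\bigl[f(\pi)\bigr]^{\top},
\]
restricted to a basis of $V$, of the form $\lambda_{\min}(\Sigma|_V) \ge n^{-O(t)}$. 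Conditions (i) and (ii) are formalities; the real work is (iii).

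I would establish (iii) via the representation theory of $S_n$. The subspace $V$ is invariant under the $S_n \times S_n$ action on $\Q^{S_n}$ given by $(\sigma,\tau)\cdot f(\pi) = f(\sigma^{-1}\pi\tau)$, and the associated permutation representation on ordered $t$-tuples of distinct elements decomposes into the irreducibles $\chi^\lambda$ for partitions $\lambda \vdash n$ satisfying $\lambda_1 \ge n - t$. These are exactly the irreducible constituents of $V$. Because $\Sigma$ commutes with the $S_n \times S_n$ action, Schur's lemma forces it to act as a scalar on each isotypic block, and those scalars are computable from character values. Since every $\chi^\lambda$ with $\lambda_1 \ge n - t$ has dimension at most $n^t$ and the multiplicities are bounded similarly, each such scalar is at least $n^{-O(t)}$. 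Substituting this lower bound and $\dim V \le n^{2t}$ into Theorem~\ref{thm:main} produces a $t$-wise permutation of size $|T| \le (cn)^{ct}$, as required.

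The main obstacle is the \emph{effective} lower bound on $\lambda_{\min}(\Sigma|_V)$: the generators $f_{(i,j)}$ are overcomplete, so one must either extract a genuine basis (for example, by selecting representatives from each isotypic component, suitably normalized) or apply Theorem~\ref{thm:main} in a version that tolerates spanning families. A secondary nuisance is choosing $N$ to lie simultaneously in the correct divisibility class and in the window where Theorem~\ref{thm:main}'s quantitative statement is informative, but this calibration parallels the one carried out in the orthogonal-array and design cases and should be routine once the spectral gap is in hand.
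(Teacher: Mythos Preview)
Your high-level strategy is right: apply Theorem~\ref{thm:main} to $B=S_n$ and the span $V$ of the indicators $f_{(i,j)}$, then check the hypotheses. The easy conditions (boundedness of $V$ with $c_2=1$, constant functions in $V$, transitive symmetry via left multiplication by $S_n$, $\dim V\le n^{2t}$) are indeed formalities, and you correctly anticipate that the divisibility constant is $n(n-1)\cdots(n-t+1)$.

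The gap is in your condition (iii). Theorem~\ref{thm:main} does \emph{not} ask for a lower bound on $\lambda_{\min}(\Sigma|_V)$; it asks that $V^{\perp}$ have a $c_3$-bounded integer basis in $\ell_1$. These are genuinely different requirements. A spectral gap controls the Gaussian part of the Fourier analysis near $\theta=0$, but the $\ell_1$-bounded integer basis for $V^{\perp}$ is what drives the estimate of $\hat X(\theta)$ \emph{away} from zero (specifically, in the proof of Lemma~\ref{lemma:estimate_fourier_far_L}, Claim~\ref{claim:decode_nb} uses a short integer dual vector to force the integer parts $(n_b)$ back into $V$). No amount of eigenvalue information about the real quadratic form $\Sigma$ yields such integer dual witnesses; the condition is arithmetic, not spectral. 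So your Schur-lemma computation, while correct as far as it goes, does not verify the hypothesis you actually need.

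The paper's argument does use the representation theory of $S_n$, but aimed at the right target: it shows that $V$ coincides with the span of the matrix coefficients of the irreducibles $V_\lambda$ with $\lambda_1\ge n-t$ (as you also observe), and then produces explicit short integer vectors spanning $V^{\perp}$, namely the column antisymmetrizers $b_T=\sum_{\sigma\in Q_T}\sign(\sigma)\,\sigma$ for tableaux $T$ of shape $\lambda$ with $\lambda_1=n-t-1$, convolved against delta functions. The key lemma is that $f\in V$ iff $b_T*f=0$ for all such $T$; since $|Q_T|\le (t+2)!$, this gives $c_3=(t+2)!$. For the divisibility constant the paper constructs a different, combinatorial basis of $V$ indexed by permutations with a long increasing subsequence, whose column-echelon form makes $\mathcal{L}(\phi)=\Z^A$ and lets one read off $c_1=n!/(n-t)!$ directly. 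Your proposal would need to replace the spectral-gap step with this kind of explicit construction of short integer vectors in $V^{\perp}$.
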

We leave the problem of estimating the number of $t$-wise
permutations of a given size for future work. See
Section~\ref{sec:number_t_wise_permutations} where the problem is
reduced to the calculation of a determinant and certain numerical
calculations are presented.

\subsection{Proof overview}
\label{sec:proof_overview}

The idea of our approach is as follows. Let us first consider the
following slight simplification of our main idea. Let $T$ be a
random multiset of $B$ of some fixed size $N$ chosen by sampling $B$
uniformly and independently $N$ times (with replacement).  Let
$(\phi_a)_{a\in A}$ be a basis of integer-valued functions for $V$
(where $A$ is some arbitrary finite index set). Observe that $T$
satisfies \eqref{eq:prob_def} if and only if
\begin{equation}\label{eq:phi_a_expectation}
  \sum_{t\in T} \phi_a(t) = \frac{N}{|B|}\sum_{b\in B}\phi_a(b)
  = \Ex\left[\sum_{t\in T} \phi_a(t)\right] \quad
  \text{for all $a$ in $A$},
\end{equation}
where we add terms multiple times if they appear in $T$ multiple
times. Thus, defining an integer-valued random variable
\begin{equation*}
  X_a:=\sum_{t\in T} \phi_a(t)
\end{equation*}
and $X:=(X_a)_{a\in A} \in \Z^A$ we see that existence of a multiset
of size $N$ satisfying \eqref{eq:prob_def} will follow if we can
show that $\Pr[X=\Ex[X]]>0$. To this end we examine more closely the
distribution of $X$. Let $t_1,\ldots, t_N$ be the random elements
chosen in forming $T$. The basis $(\phi_a)_{a\in A}$ defines a
mapping $\phi:B\to\Z^A$ by the trivial
\begin{equation*}
  \phi(b)_a:=\phi_a(b).
\end{equation*}
Observe that our choice of random model implies that the vectors
$(\phi(t_i))_{i\in [N]}$ are independent and identically distributed. Hence,
\begin{equation}\label{eq:X_as_sum}
    X = \sum_i \phi(t_i)
\end{equation}
may be viewed as the end position of an $N$-step random walk in the
lattice $\Z^{A}$. Thus we may hope that if $N$ is sufficiently
large, then $X$ has an approximately (multi-dimensional) Gaussian
distribution by the central limit theorem. If the relevant local
central limit theorem holds as well, then the probability $\Pr[X=x]$
also satisfies a Gaussian approximation. In particular, since a
(non-degenerate) Gaussian always has positive density at its
expectation, we could conclude that $\Pr[X=\Ex[X]]>0$ as desired.
Moreover, to estimate the number of multisets of size $N$ satisfying
\eqref{eq:prob_def} we need only estimate $\Pr[X=\Ex[X]]$ using the
Gaussian approximation.

The above description is the essence of our approach. The main
obstacle is, of course, pointed out in the last step. We must
control the rate of convergence of the local central limit theorem
well enough so that the convergence error does not outweigh the
probability density of the Gaussian distribution at $\Ex[X]$. Recall
that the order of magnitude of such a density is typically
$c^{-|A|}$ for some constant $c>1$, and recall that $|A|$ is the
dimension of $V$, which is the main parameter of our problem.  So we
indeed have very small probabilities.  For this reason, and because
we want convergence when $N$ is only polynomial in the dimension of
$V$, we were unable to use any standard local central limit theorem.
Instead, we develop an ad hoc version using Fourier analysis.

In our proof of the main theorem, we modify the above description in
one respect.  It is technically more convenient to work with a
slightly different probability model.  Instead of choosing $T$ as
above, we set $p:=N/|B|$ and define $T$ by taking each element of
$B$ into $T$ independently with probability $p$.  This has the
benefit of guaranteeing that $T$ is a proper set instead of a
multiset. However, it has also the disadvantage that it does not
guarantee that $|T|=N$. To remedy this, we assume that the space $V$
contains the constant function $h \equiv 1$; or if not, we can add
it to $V$ at the minor cost of increasing the dimension of $V$ by 1.
With this assumption, since
\begin{equation*}
  \Ex\bigg[\sum_{t\in T} h(t)\bigg] = \Ex[|T|] = N,
\end{equation*}
we see that \eqref{eq:phi_a_expectation}, or equivalently
$X=\Ex[X]$, implies both that $|T|=N$ and that~\eqref{eq:prob_def}
holds. Another disadvantage is that in this new probability model,
the vector $X$ is no longer a sum of identically distributed
variables. However, since the summands in \eqref{eq:X_as_sum} are
still independent, we can continue to use Fourier analysis methods
in our proof.

We cannot expect there to always be a small subset $T\subset B$ that
satisfies \eqref{eq:prob_def}. For instance, Alon and
Vu~\cite{AlonVu97} found a regular hypergraph with $n$ vertices and
$\approx n^{n/2}$ edges, with no non-trivial regular sub-hypergraph.
Here, a regular hypergraph is one in which every vertex belongs to
the same number of hyperedges. We may describe their example in our
language by letting $B$ be the set of hyperedges of this hypergraph,
$A$ be its vertex set, and define $\phi:B\to\{0,1\}^A$ by letting
$\phi(b)$ be the indicator function of the set of vertices contained
in $b$. The result of \cite{AlonVu97} implies that while the vector
$\sum_{b\in B} \phi(b)$ is constant, this property is not shared by
$\sum_{t\in T} \phi(t)$ for any non-empty, proper subset $T\subset
B$. Thus, we need to impose certain conditions on $B$ and $V$, or
equivalently on the map $\phi$.

We will require certain divisibility, boundedness and symmetry
assumptions. Our main theorem shows that these conditions suffice to
yield the existence of a small solution of \eqref{eq:prob_def} and,
moreover, a rather precise estimate for the number of solutions. The
conditions and the statement of the theorem appear in
Section~\ref{sec:framework}. The existence and counting theorems for
orthogonal arrays, $t$-designs and $t$-wise permutations follow by
verifying these conditions for the choice of $B$ and $V$ detailed in
Sections~\ref{sec:OAs} through \ref{sec:perms}.

\subsection{Related work}

In the probabilistic formulation \eqref{eq:prob_def_prob} of our
problem we seek a small subset $T \subset B$ such that the uniform
distribution over $T$ simulates the uniform distribution over $B$
with regards to certain tests. Equivalently, we want to estimate the
probability that a random walk in a lattice with a prescribed set of
allowed steps reaches a specific point. Our approach via Fourier
transform to this problem is certainly classical and there have been
several works using this idea to enumerate combinatorial objects.
Besides the works \cite{CGGMR2010}, \cite{MckayWormald} and
\cite{CanfieldMckay} cited above, we mention de Launey and Levin's
enumeration of partial Hadamard matrices \cite{de2010fourier},
Montgomery's enumeration of the incidence matrices of balanced
incomplete block designs \cite{montgomery2014asymptotic} and
Barvinok and Hartigan's enumeration of the number of contingency
tables with prescribed sums \cite{barvinok2010maximum,
barvinok2012asymptotic, barvinok2013number} and the number of graphs
with a given `tame' degree sequence \cite{barvinok2013number}. For
further related developments we refer to the paper of Isaev and
McKay \cite{isaev2016complex} and the recent book of Barvinok
\cite{barvinok2017book}.

Our work introduces general theorems, detailed in
Section~\ref{sec:framework} below, to treat existence and
enumeration problems which fall in the framework
\eqref{eq:prob_def}. We are aware of only one such general framework
in the previous literature, the approach of Barvinok and Hartigan
\cite[Theorem~2.6]{barvinok2010maximum} for the equivalent problem
of counting the number of vectors with $0,1$ coordinates inside a
given polytope. The conditions required to apply the theorem of
\cite{barvinok2010maximum} involve the bounding of certain quadratic
forms, while our approach relies on symmetry and ideas arising from
coding theory which manifest in the `low density parity check'
(LDPC) condition (that $V^{\perp}$ has a $c_3$-bounded integer basis
in $\ell_1$). The LDPC condition plays a central role in the proofs
of our main theorems, most prominently in the proof of
Theorem~\ref{thm:perms_first_version} on $t$-wise permutations where
it integrates naturally with the representation theory of the
symmetric group. We have not seen the problem related to coding
theory in previous works and believe this relation will be of value
in other contexts.

A framework for studying problems similar to ours in a continuous
setup was introduced by Seymour and Zaslavsky
\cite{SeymourZaslavsky84}. They consider the case when $B$ is a
path-connected topological space with a measure $\mu$ of full
support, and the problem of finding a finite subset $T\subset B$
such that the uniform distribution over $T$ integrates certain
continuous functions exactly as $\mu$. This framework is sometimes
referred to as ``averaging sets'', ``equal-weight quadratures'' or
``Chebyshev-type quadratures''. It was shown in
\cite{SeymourZaslavsky84} that such $T$ exist in great generality
and that their cardinality can be any number with finitely many
exceptions. More recently, Kane~\cite{kane2015small} gave effective
bounds on the size of $T$ in terms of certain symmetries of $B$ (see
also \cite{BondarenkoRadchenkoViazovska} for the case of spherical
designs and \cite{Gilboa2016Chebyshev} for the classical case of
integrating polynomials against a one-dimensional measure).

Next, there are two ways to relax the problem we consider and make
its solution easier. Each one raises new questions regarding
explicit solutions.

One relaxation is to allow a set $T$ with a \emph{non-uniform}
distribution $\mu$ which simulates the \emph{uniform} distribution
over $B$. For many practical applications of $t$-designs and
$t$-wise permutations in statistics and computer science, but not
quite every application, this relaxation is as good as the uniform
question. The existence of a solution with small support is
guaranteed by Carath\'eodory's theorem, using the fact that the
constraints on $\mu$ are all linear equalities and inequalities.
Moreover, such a solution can be found efficiently, as was shown by
Karp and Papadimitriou~\cite{KarpPapadimitriou82} and in more
general settings by Koller and Megiddo~\cite{KollerMegiddo94}. Alon
and Lovett~\cite{AlonLovett11} give a strongly explicit analog of
this in the case of $t$-wise permutations and more generally in the
case of group actions.

A different relaxation is to require the uniform distribution over
$T$ to only approximately satisfy equation \eqref{eq:prob_def_prob}.
Then it is trivial that a sufficiently large random subset $T
\subset B$ satisfies the requirement with high probability, and the
question is to find an explicit solution. For instance, we can relax
the problem of $t$-wise permutations to \emph{almost} $t$-wise
permutations.  For this variant an optimal solution (up to
polynomial factors) was achieved by Kaplan, Naor and
Reingold~\cite{KaplanNaorReingold05}, who gave a construction of
such an almost $t$-wise permutation of size $n^{O(t)}$.

\subsection{Paper organization}

We give a precise description of the general framework and our main
theorem in Section~\ref{sec:framework}. We apply it to show the
existence and estimate the number of orthogonal arrays, $t$-designs
and $t$-wise permutations in Section~\ref{sec:applications}. The
proof of our main theorem is given in
Section~\ref{sec:proof_main_theorem}. We summarize and give some
open problems in Section~\ref{sec:summary}.

\section{General framework}
\label{sec:framework}

Let $B$ be a finite set and let $V$ be a linear subspace of
functions $f:B \to \Q$. The goal of this work is to find sufficient
conditions for the existence of a small set $T \subset B$ such that
\begin{equation}\label{eq:prob_def_2}
\frac{1}{|T|}\sum_{t\in T} f(t) = \frac{1}{|B|}\sum_{b\in B}
f(b)\quad\text{for all $f$ in $V$,}
\end{equation}
and moreover to estimate the number of such sets of a given size. We
now describe a list of conditions on $V$ which will be sufficient
for this task. Some of these conditions are easy to verify in
applications, while others require some effort. We stress that in
all of our applications these properties are verified explicitly;
this is in contrast with the fact that we do not know how to find
$T$ explicitly.

\paragraph{Divisibility.} For \eqref{eq:prob_def_2} to hold for a
set $T$ with $|T|=N$ we must have
$$
\sum_{t\in T} f(t) = \frac{N}{|B|}\sum_{b\in B} f(b)\quad\text{for
all $f$ in $V$.}
$$
In particular, we must have that
\begin{equation}\label{eq:div_condition_basis_free}
  \text{there exists a $\gamma\in\Z^B$ such that}\ \sum_{b\in B} \gamma_b f(b) = \frac{N}{|B|}\sum_{b\in B}
f(b)\ \text{for all $f$ in $V$.}
\end{equation}
The set of all integers $N$ satisfying
\eqref{eq:div_condition_basis_free} consists of all integer
multiples of some minimal positive integer $c_1$. To see this,
observe that if $N_1$ and $N_2$ are solutions then their difference
is also a solution. It follows that $|T|$ must be an integer
multiple of $c_1$. This is the divisibility condition that we
require, and we call this $c_1$ the \emph{divisibility constant of
$V$}.

We remark that to check the divisibility condition in practice it
suffices to check \eqref{eq:div_condition_basis_free} for $f$
belonging to some basis of $V$. More explicitly, we make the
following definition.
\begin{dfn}
Let $\phi:B \to \Q^A$ for some finite set $A$. We define $\LL(\phi)$
to be the \emph{lattice spanned by $\phi(b)$, $b \in B$}. I.e.,
$$
\LL(\phi) := \left\{\sum_{b \in B} n_b \cdot \phi(b): n_b \in
\Z\right\} \subset \Q^A.
$$
\end{dfn}
Using this definition, if $\phi:B\to\Q^A$ is such that the vectors
$(\phi_a)$, $a\in A$, form a basis for $V$, then the divisibility
condition is equivalent to having $\frac{N}{|B|}\sum_{b\in
B}\phi(b)\in\LL(\phi)$.

\paragraph{Boundedness.} We make the following definition.
\begin{dfn}
Let $W \subset \Q^B$ be a vector space. For $1\le p\le \infty$, we
say that $W$ has a \emph{$c$-bounded integer basis in $\ell_p$} if
$W$ is spanned by integer vectors whose $\ell_p$ norm is at most
$c$. That is, if
$$
\Span(\{\gamma \in W \cap \Z^B:  \|\gamma\|_p \le c\})=W.
$$
\end{dfn}
The reader should note that the term integer basis is used here with
a different meaning than in Abelian group theory. We will only use
in this paper the norms $\|\gamma\|_1 = \sum_{b \in B} |\gamma_b|$
and $\|\gamma\|_{\infty} = \max_{b \in B} |\gamma_b|$. We denote by
$V^{\perp}$ the orthogonal complement of $V$ in $\Q^B$, that is,
$$
V^{\perp} := \{g \in \Q^B\,:\, \sum_{b \in B} f(b)g(b)=0 \quad
\forall f \in V\}.
$$
We impose the conditions that for some small $c_2$ and $c_3$, $V$
has a $c_2$-bounded integer basis in $\ell_{\infty}$ and $V^{\perp}$
has a $c_3$-bounded integer basis in $\ell_1$.

In our applications, the boundedness condition for $V$ follows
directly from the definition. However, the boundedness condition for
$V^\perp$ is less trivial and requires far more work to check. We
view this condition as an analog of the LDPC (Low Density Parity
Check) condition in coding theory, when viewed over the integers,
and we develop techniques based on coding theory in order to
guarantee it. In particular, we show in
Section~\ref{sec:local_decodability} that the condition is implied
by a certain local decodability property of $V$.

\paragraph{Symmetry.} The next condition relates to the symmetries of the subspace
$V$.
\begin{dfn}
  A \emph{symmetry of $V$} is a permutation $\pi \in S_B$ satisfying
$f \circ \pi\in V$ for all $f \in V$.
\end{dfn}
Equivalently, if $\phi:B\to\Q^A$ is such that the vectors
$(\phi_a)$, $a\in A$, form a basis for $V$ then a permutation
$\pi\in S_B$ is a symmetry of $V$ if and only if there exists an
invertible linear map $\tau:\Q^A \to \Q^A$ such that
$$
\phi(\pi(b)) = \tau(\phi(b))\quad\text{for all $b\in B$}.
$$
It is straightforward to check that the set of symmetries of $V$
forms a subgroup of $S_B$. We impose the condition that this group
acts transitively on $B$. That is, that for any $b_1,b_2 \in B$
there exists a symmetry $\pi$ of $V$ satisfying $\pi(b_1)=b_2$. In
our applications this condition follows from the symmetric nature of
their description.

\paragraph{Constant functions.} Our last condition is required for somewhat technical reasons as
explained in the proof overview section. We require the constant
functions to belong to $V$. We note that this is the case in all of
our applications.

We can now state our main theorem.

\begin{thm}[Main Theorem]\label{thm:main}
There exists a constant $C>0$ such that the following is true. Let
$B$ be a finite set and let $V$ be a linear subspace of functions
$f:B \to \Q$. Assume that the following conditions hold for some
integers $c_1,c_2,c_3 \ge 1$,
\begin{enumerate}
\item Divisibility: $c_1$ is the divisibility constant of $V$.
\item Boundedness of $V$: $V$ has a $c_2$-bounded integer basis in $\ell_{\infty}$.
\item Boundedness of $V^{\perp}$: $V^{\perp}$ has a $c_3$-bounded integer basis in $\ell_{1}$.
\item Symmetry: for any $b_1,b_2 \in B$ there exists a symmetry $\pi$ of $V$ satisfying $\pi(b_1)=b_2$.
\item Constant functions: The constant functions belong to $V$.
\end{enumerate}
If
\begin{equation}\label{eq:N_condition}
  \text{$N$ is an integer multiple of $c_1$ satisfying $\min(N, |B|-N) \ge
C \cdot c_2 c_3^2\dim(V)^6\log(2c_3\dim(V))^6$}
\end{equation}
then there exists a subset $T\subset B$ of size $|T|=N$ satisfying
\begin{equation}\label{eq:prob_def_3}
\frac{1}{|T|}\sum_{t\in T} f(t) = \frac{1}{|B|}\sum_{b\in B}
f(b)\quad\text{for all $f$ in $V$.}
\end{equation}
\end{thm}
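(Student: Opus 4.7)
The plan is to follow the strategy outlined in Section~\ref{sec:proof_overview}. I form $T$ by including each $b \in B$ independently with probability $p := N/|B|$ and fix a map $\phi : B \to \Z^A$ whose coordinate functions $(\phi_a)_{a \in A}$ form an integer basis of $V$ with $\|\phi(b)\|_\infty \le c_2$ (such a basis exists by hypothesis~(2)). Set
\begin{equation*}
X := \sum_{b \in B} \one{b \in T}\, \phi(b), \qquad \mu := \Ex[X] = p \sum_{b \in B} \phi(b).
\end{equation*}
Because the constant function lies in $V$, the event $\{X = \mu\}$ automatically forces $|T| = N$ and simultaneously certifies \eqref{eq:prob_def_3}, so the theorem reduces to proving $\Pr[X = \mu] > 0$. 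The divisibility hypothesis on $N$ is precisely the condition that puts $\mu$ in the lattice $\LL(\phi)$, which is obviously necessary.

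Next I would apply a Fourier-inversion formula on the appropriate integer lattice. Let $\LL_0 \subset \Z^A$ be the lattice generated by $\phi(b) - \phi(b')$ for $b, b' \in B$, so that $X - \mu$ almost surely takes values in $\LL_0$, and write $F$ for a fundamental domain of the dual lattice $\LL_0^*$. Then
\begin{equation*}
\Pr[X = \mu] \;=\; \frac{1}{\Vol(F)} \int_F \prod_{b \in B} \bigl(1 - p + p\, e^{2\pi i \ip{\theta,\, \phi(b)}} \bigr)\, e^{-2\pi i \ip{\theta,\, \mu}}\, d\theta,
\end{equation*}
and the proof reduces to evaluating this integral with enough precision to conclude it is positive. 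I would split $F = U \sqcup (F \setminus U)$ with $U$ a small Euclidean ball of radius $r$ around the origin, where $r$ is tuned as a function of $c_2$, $c_3$, and $\dim V$.

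On $U$ one carries out a Taylor expansion of the form $\log \Ex[e^{2\pi i \ip{\theta, X - \mu}}] = -2\pi^2\, \theta^{T} \Sigma\, \theta + \text{cubic error}$, where $\Sigma = p(1-p) \sum_{b \in B} \phi(b) \phi(b)^{T}$ is the covariance matrix of $X$. Since the $\phi_a$ form a basis of $V$, no non-trivial linear combination of them vanishes on $B$, so $\Sigma$ is positive definite; the transitive symmetry hypothesis makes $\Sigma$ invariant under the symmetry group of $V$ acting on $A$, which pins down its eigenstructure and hence $\det \Sigma$. The $\ell_\infty$ bound $c_2$ on $\|\phi(b)\|$ controls the cubic remainder uniformly in $\theta \in U$, so integrating the Gaussian approximation over $U$ yields a main term of order $|\det(2\pi \Sigma)|^{-1/2}$, which is exponentially small in $\dim V$ and is the quantity the tail integral must ultimately beat.

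The hard part will be controlling the modulus of the characteristic function
\begin{equation*}
\bigl| \Ex[e^{2\pi i \ip{\theta,\, X}}] \bigr| \;=\; \prod_{b \in B} \bigl|\, 1 - p + p\, e^{2\pi i \ip{\theta,\, \phi(b)}}\, \bigr|
\end{equation*}
on $F \setminus U$: this must be made much smaller than the already exponentially small Gaussian main term. This is where the $c_3$-bounded integer basis of $V^\perp$ is essential. Every basis element $\gamma \in V^\perp \cap \Z^B$ with $\|\gamma\|_1 \le c_3$ supplies a short integer relation $\sum_b \gamma_b\, \phi(b) = 0$, which both pins down $\LL(\phi)$ and, together with transitive symmetry, prevents the phases $\ip{\theta, \phi(b)}$ from all clustering near $\Z$ unless $\theta$ itself lies very close to a dual lattice point. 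Converting this rigidity into a quantitative lower bound on the fraction of $b \in B$ for which $|1 - p + p\, e^{2\pi i \ip{\theta, \phi(b)}}|$ is bounded away from $1$, and then raising this bound to a power comparable to $|B|$, is what forces the quantitative hypothesis~\eqref{eq:N_condition} (in particular the $\dim(V)^6$ factor). Once the tail integral is dominated by the Gaussian main term, positivity of $\Pr[X = \mu]$ follows and the theorem is proved.
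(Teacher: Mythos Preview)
Your overall architecture is correct and matches the paper: randomize $T$, reduce to $\Pr[X=\mu]>0$ via the constant-functions hypothesis, apply Fourier inversion, split into a small ball near zero where a Gaussian approximation holds and a complement where the characteristic function is small, and use the $c_3$-bounded basis of $V^\perp$ for the latter. But you have misidentified where the symmetry hypothesis enters, and this hides the central technical idea of the paper's argument.

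You write that transitive symmetry makes $\Sigma$ invariant under the induced action on $A$ and thereby ``pins down its eigenstructure and hence $\det\Sigma$.'' The paper does nothing of the sort. The symmetry $\pi\in S_B$ induces only a linear (generally non-orthogonal) map $\tau$ on $\Q^A$ via $\phi(\pi(b))=\tau(\phi(b))$, and no spectral information about $\Sigma$ is ever extracted. In fact $\det\Sigma$ is never computed: the Gaussian main term and the error terms $I_1,I_3$ all carry the same factor $\det(\Sigma)^{-1/2}$, which cancels in the comparison, while the remaining term $I_2$ is handled with crude bounds ($\det(\LL(\phi))\ge 1$ since $\LL(\phi)\subset\Z^A$, and Hadamard's inequality for $\det\Sigma$).

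What symmetry \emph{is} actually used for is to prove a \emph{local correction} property (Lemma~\ref{lemma:phi_is_locally_correctable}): for any small set $E\subset B$ and any $e\in E$, one can express $\phi(e)$ as a short integer combination of $\{\phi(b):b\notin E\}$. The proof is a pigeonhole argument (Claim~\ref{claim:phi_has_short_orth_vecs}) producing a short $\{0,\pm1\}$ relation among any $O(|A|\log(c_2|A|))$ rows of $\phi$, which is then transported by a symmetry to hit the desired element $e$. This yields the key tameness Lemma~\ref{lemma:phi_FC_tame}: $\max_b|\ip{\phi(b),\theta}|\le M\|\theta\|_R$ and, crucially, the same inequality holds for the \emph{fractional parts} $r_b$ of $\ip{\phi(b),\theta}$. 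Your claim that the $\ell_\infty$ bound $c_2$ alone controls the cubic remainder in the Taylor expansion is not enough; it is this max-versus-RMS inequality that converts the cubic error into $O(M\|\theta\|_R^3 N)$.

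On the tail, your intuition is right but the mechanism is more specific than you indicate. Writing $\ip{\phi(b),\theta}=n_b+r_b$, if $\max_b|r_b|<1/c_3$ then any $\gamma$ in the $c_3$-bounded integer basis of $V^\perp$ satisfies $|\sum_b\gamma_b r_b|<1$, forcing $\sum_b\gamma_b n_b=0$; thus the integer vector $(n_b)$ lies in $V$, i.e.\ equals $(\ip{\phi(b),\alpha})_b$ for some $\alpha$ in the dual lattice, so $\|\theta-\alpha\|_R^2=\frac{1}{|B|}\sum r_b^2$. Since $\theta$ lies in the Voronoi cell, $\|\theta\|_R^2\le\frac{1}{|B|}\sum r_b^2$, and the trivial bound $|1-p+pe^{2\pi i r}|\le e^{-pr^2}$ finishes. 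The point is not merely that the phases cannot all cluster near $\Z$, but that when they \emph{do} so cluster, the integer parts exhibit a nearby dual-lattice point.
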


A second goal of our work is to count the number of subsets $T$ of a
given size $N$ which satisfy \eqref{eq:prob_def_3}. To this end, we
define a parameter $\rho(V)$ of the vector space $V$ as follows. It
is easiest to define $\rho(V)$ via a choice of basis for $V$ but we
stress that its value is independent of this choice. If
$\phi:B\to\Q^A$ is such that the vectors $(\phi_a)$, $a\in A$, form
a basis for $V$, we define
\begin{equation}\label{eq:rho_formula}
  \rho(V):=\frac{\det(\LL(\phi))}{\sqrt{\det(\phi^t\phi)}},
\end{equation}
where in the numerator we mean the determinant of the lattice
$\LL(\phi)$ (i.e., the volume in the appropriate dimension of a
fundamental parallelogram of the lattice generated by $\{\phi(b)\}$,
$b\in B$) and in the denominator, the root of the determinant of the
$A\times A$ matrix whose $a,a'$ entry is $\sum_{b\in B} \phi(b)_a
\phi(b)_{a'}$ (we denote by $\phi^t$ the transpose of $\phi$). The
definition takes a more symmetric form upon noting that the
denominator is the determinant of the lattice in $\Q^B$ generated by
$\{\phi_a\}$, $a\in A$.

\begin{thm}\label{thm:count_solutions}
There exists a constant $C>0$ such that the following is true. Let
$B$ be a finite set and let $V$ be a linear subspace of functions
$f:B \to \Q$. Assume that the conditions of Theorem~\ref{thm:main}
are satisfied with constants $c_1,c_2,c_3\ge 1$ and that $N$
satisfies \eqref{eq:N_condition}. Then the number of subsets
$T\subset B$ of size $N$ which satisfy \eqref{eq:prob_def_3} equals
\begin{equation*}
  \frac{\rho(V)}{(2\pi
  p(1-p))^{\frac{\dim(V)}{2}}p^N(1-p)^{|B|-N}}(1+\delta)
\end{equation*}
where $p:=\frac{N}{|B|}$ and $|\delta|\le \frac{C\dim(V)^3
(\log(2c_2\dim(V)))^{3/2}}{\sqrt{\min(N,|B|-N)}}$.
\end{thm}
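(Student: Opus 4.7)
The plan is to execute the Bernoulli probability model described in Section \ref{sec:proof_overview} and extract the counting formula from a sharp local central limit theorem for the associated lattice random walk. Fix a basis $\phi:B\to\Q^A$ for $V$ with $|A|=\dim(V)$, set $p:=N/|B|$, and let $T\subset B$ be the random subset obtained by including each $b\in B$ independently with probability $p$. Define the centered random vector
$$X:=\sum_{b\in B}(\one{b\in T}-p)\phi(b)\in \LL(\phi).$$
Because the constant functions lie in $V$, the event $\{X=0\}$ simultaneously forces $|T|=N$ and the averaging identity \eqref{eq:prob_def_3}. Since the probability of any particular subset $T$ of size $N$ in this model is exactly $p^N(1-p)^{|B|-N}$, the number of subsets to be counted equals $\Pr[X=0]/(p^N(1-p)^{|B|-N})$, so the entire task reduces to a sharp estimate of $\Pr[X=0]$.

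The random vector $X$ is a sum of $|B|$ independent (but non-identically distributed) lattice-valued summands, and its covariance matrix computes directly to
$$\Sigma = p(1-p)\phi^t\phi,\qquad \det(\Sigma)=(p(1-p))^{\dim(V)}\det(\phi^t\phi).$$
If a local central limit theorem applies at $0$, then $\Pr[X=0]$ equals the Gaussian density at $0$ on the lattice $\LL(\phi)$ up to a small multiplicative error, i.e.
$$\Pr[X=0]=\frac{\det(\LL(\phi))}{\sqrt{(2\pi)^{\dim(V)}\det(\Sigma)}}(1+\delta)=\frac{\rho(V)}{(2\pi p(1-p))^{\dim(V)/2}}(1+\delta),$$
by the very definition \eqref{eq:rho_formula} of $\rho(V)$. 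Dividing by $p^N(1-p)^{|B|-N}$ yields precisely the formula in the statement, and the basis-independence of $\rho(V)$ shows that the answer does not depend on the choice of $\phi$.

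The main obstacle is producing a local central limit theorem with multiplicative error of the stated form $O(\dim(V)^3(\log(2c_2\dim(V)))^{3/2}/\sqrt{\min(N,|B|-N)})$. No off-the-shelf LCLT or Berry--Esseen estimate is strong enough: the Gaussian density at $0$ is of order $c^{-\dim(V)}$, while the ``effective number of steps'' is only polynomial in $\dim(V)$. The approach is Fourier-analytic: one represents
$$\Pr[X=0]=\frac{\det(\LL(\phi))}{(2\pi)^{\dim(V)}}\int_{F}\Ex\!\left[e^{i\ip{\theta,X}}\right]d\theta,$$
over a fundamental domain $F$ of the dual lattice, and splits $F$ into a Gaussian regime near the origin and a tail regime. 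On the Gaussian regime one Taylor-expands $\log\Ex[e^{i\ip{\theta,X}}]$ to match the Gaussian integral, with the $\ell_\infty$-bound $c_2$ controlling the Taylor remainder and producing the $c_2$-dependence in $\delta$. On the tail regime one uses the divisibility constant $c_1$, the $\ell_1$-bounded basis of $V^\perp$ (via $c_3$), and the transitive symmetry of $V$ to bound $|\Ex[e^{i\ip{\theta,X}}]|$ uniformly away from $1$, so the tail contributes negligibly. This Fourier-analytic LCLT machinery is precisely the apparatus developed in Section \ref{sec:proof_main_theorem} for Theorem \ref{thm:main}; Theorem \ref{thm:count_solutions} follows by retaining the full multiplicative estimate produced by that analysis rather than merely the positivity it implies.
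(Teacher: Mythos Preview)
Your approach is essentially the same as the paper's: form the Bernoulli random vector, observe via the constant-functions assumption that hitting the mean is equivalent to $|T|=N$ together with \eqref{eq:prob_def_3}, translate the count into $\Pr[X=0]/(p^N(1-p)^{|B|-N})$, and invoke the Fourier-analytic LCLT of Section~\ref{sec:proof_main_theorem} at the central point. One small correction to your description of where the hypotheses enter: the divisibility constant $c_1$ plays no role in the tail bounds of the Fourier integral---the LCLT (Theorem~\ref{thm:LCLT_with_basis}) uses only the two boundedness assumptions and symmetry. Divisibility is needed solely to guarantee that your centered $X$ actually takes values in $\LL(\phi)$, i.e.\ that $p\sum_b\phi(b)=\Ex[\sum_b T_b\phi(b)]\in\LL(\phi)$, so that $0$ is a lattice point to which the LCLT can be applied.
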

As explained before, the divisibility requirement in
\eqref{eq:N_condition} is a necessary condition for the existence of
a subset $T\subset B$ of size $N$ satisfying \eqref{eq:prob_def_3}.
Our theorem says that when $N$ is not too close to $0$ or $|B|$ this
condition is also sufficient, and gives a rather precise count of
the number of such subsets.

Finally, we remark that our techniques yield a bit more. One can use
them to show the existence, and estimate the number, of subsets $T$
of a given size on which the average of functions $f\in V$ has a
specified (small) displacement from the average over all of $B$.
This extension is described in Sections~\ref{sec:LCLT_with_basis}
and \ref{sec:LCLT_basis_free}.

\section{Applications}
\label{sec:applications}

In this section we apply our main theorem, Theorem~\ref{thm:main},
to prove the existence results for orthogonal arrays and
$t$-designs,
Theorems~\ref{thm:OA_first_version},~\ref{thm:OA_count},~\ref{thm:designs_first_version}
and~\ref{thm:perms_first_version}. It will be useful to introduce
the following notation. For a map $\phi:B\to \Q^A$ and a vector
$\gamma\in\Q^B$ we let
\begin{equation*}
  \phi(\gamma) = \sum_{b\in B} \gamma_b \phi(b) \in \Q^A.
\end{equation*}
We also define
\begin{equation*}
  \|\phi\|_{\infty} := \max_{b\in B, a\in A} |\phi(b)_a|.
\end{equation*}
We start by describing a certain condition which implies the
boundedness condition for $V^\perp$ and which will be useful in our
applications to orthogonal arrays and $t$-designs.

\subsection{Local decodability}\label{sec:local_decodability}
In all of our applications it turns out that the most difficult
condition to verify is that $V^{\perp}$ has a bounded integer basis
in $\ell_1$. This condition can be seen as an analog of the Low
Density Parity Check (LDPC) notion coming from coding theory. We
next introduce another condition which implies that $V^{\perp}$ has
a bounded integer basis in $\ell_1$, but which is sometimes easier
to verify in practice. This condition is motivated by the notion of
locally decodable codes in coding theory. Local decodability of
codes is mainly studied in the context of codes defined over finite
fields, see, e.g.,~\cite{Yekhanin}. Here, we study an analog of
these definitions for codes defined over the rationals.

Formally, we require that for some
bounded integer basis $(\phi_a)$, $a\in A$, of $V$, we may express a
small multiple of the unit vectors (in the basis given by $A$) by
short integer combinations of $\phi(b)$. The condition is also related to
the notion of bi-orthogonal system in functional analysis.

\begin{dfn}[Local decodability]\label{def:local_decodability}
A map $\phi:B \to \Z^A$ is \emph{locally decodable with bound $c$}
if there exists an integer $m \ge 1$ with $|m| \le c$ and a set of
vectors $(\gamma^a)\subset\Z^B$, $a\in A$, satisfying
$\|\gamma^a\|_1 \le c$ and
$$
\phi(\gamma^a) = m \cdot e^a\qquad(a\in A),
$$
where $e^a \in \{0,1\}^A$ is the unit vector with $1$ in coordinate
$a$.
\end{dfn}

\begin{claim}\label{claim:LDC_implies_LDPC}
If $V$ has a basis of integer vectors $(\phi_a)\subset\Z^B$, $a\in
A$, such that $\|\phi\|_{\infty} \le c_2$ and $\phi$ is locally
decodable with bound $c_4$ then $V^{\perp}$ has a $c_3$-bounded
integer basis in $\ell_1$ with $c_3 \le 2 c_2 c_4 |A|$.
\end{claim}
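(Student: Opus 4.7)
The plan is to construct, for each $b \in B$, an explicit integer vector $\psi^b \in V^{\perp}$ with small $\ell_1$ norm, and then show that these $|B|$ vectors span $V^{\perp}$, from which a basis with the desired bound can be extracted.

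First I would define, using the local decodability data $(m, \gamma^a)$, the candidate vectors
\begin{equation*}
  \psi^b := m \cdot e^b - \sum_{a \in A} \phi_a(b)\, \gamma^a \in \Z^B,
\end{equation*}
where $e^b$ is the standard unit vector in $\Z^B$. The key computation is that for every $a' \in A$,
\begin{equation*}
  \langle \psi^b, \phi_{a'}\rangle = m\phi_{a'}(b) - \sum_{a\in A} \phi_a(b)\langle \gamma^a, \phi_{a'}\rangle.
\end{equation*}
By the local decodability identity $\phi(\gamma^a) = m\cdot e^a$, one has $\langle \gamma^a, \phi_{a'}\rangle = m\cdot \mathbf{1}\{a=a'\}$, so the two terms cancel and $\psi^b \in V^{\perp}$. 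The $\ell_1$ bound is immediate from the triangle inequality: $\|\psi^b\|_1 \le |m| + \sum_{a\in A}|\phi_a(b)|\|\gamma^a\|_1 \le c_4 + |A|\cdot c_2 \cdot c_4 \le 2c_2 c_4 |A|$.

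Next I would verify that the collection $\{\psi^b\}_{b\in B}$ spans $V^{\perp}$ over $\Q$. For any $g \in V^{\perp}$, the orthogonality $\langle g,\phi_a\rangle = 0$ gives
\begin{equation*}
  \sum_{b\in B} g_b\, \psi^b = m g - \sum_{a\in A}\Bigl(\sum_{b\in B} g_b\phi_a(b)\Bigr)\gamma^a = m g,
\end{equation*}
so $g$ lies in the rational span of $\{\psi^b\}$ since $m\neq 0$. Extracting a $\Q$-basis from this spanning set produces an integer basis of $V^{\perp}$ with each member of $\ell_1$ norm at most $2 c_2 c_4 |A|$, proving the claim.

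There is no real obstacle; the only subtle point is making sure the definition of $\psi^b$ is set up so that the local decodability identity produces the precise cancellation yielding $\psi^b \in V^{\perp}$, and so that the reconstruction identity $\sum_b g_b\psi^b = m g$ falls out cleanly for $g\in V^{\perp}$. The factor $m$ in the definition of local decodability is exactly what allows $\psi^b$ to have integer entries while still landing in $V^{\perp}$, so the bound $|m|\le c_4$ feeds directly into the final estimate $c_3 \le 2c_2 c_4 |A|$.
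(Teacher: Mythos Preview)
Your proof is correct and follows essentially the same approach as the paper: the vectors $\psi^b$ you construct are exactly the paper's $\delta^b$, with the same verification that they lie in $V^\perp$ and the same $\ell_1$ estimate. The one minor difference is in the spanning step: the paper argues by dimension-counting (applying a surjective map $\Q^B\to\Q^{|B|-|A|}$ that kills the $\gamma^a$ and noting the images of the $\delta^b$ form a basis), whereas your reconstruction identity $\sum_b g_b\psi^b = mg$ for $g\in V^\perp$ is a more direct and arguably cleaner way to see that every $g\in V^\perp$ lies in the rational span.
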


\begin{proof}
Let $m$ and $(\gamma^a)$, $a\in A$, be as in
Definition~\ref{def:local_decodability} for $\phi$. Define the
vectors $(\delta^b)\subset\Z^B$, $b\in B$ by
$$
\delta^b := m \cdot u^b - \sum_{a \in A} \phi(b)_a \cdot \gamma^a,
$$
where $u^b \in \{0,1\}^B$ is the unit vector with $1$ in coordinate
$b$. We claim that the set $\{\delta^b: b \in B\}$ linearly spans
$V^{\perp}$. First, note that $\delta^b \in V^{\perp}$ for all $b
\in B$ since
$$
\phi(\delta^b) = m\cdot\phi(u^b) - \sum_{a\in A} \phi(b)_a\cdot
\phi(\gamma^a) = m \cdot \phi(b) - \sum_{a \in A} \phi(b)_a \cdot (m
\cdot e_a) = 0.
$$
We next argue that the rank of $\{\delta_b: b \in B\}$ is at least
$|B|-|A|$, and hence they must span $V^{\perp}$. To see this, let
$\Psi:\Q^B \to \Q^{|B|-|A|}$ be an arbitrary surjective linear map
which sends $\{\gamma^a: a \in A\}$ to zero. Then $\{\delta_b: b \in
B\}$ are mapped to a basis of $\Q^{|B|-|A|}$ by $\Psi$ and hence
their dimension is at least $|B|-|A|$ (in other words, the
$\{\delta^b\}$ are a linear perturbation of the $B\times B$ identity
matrix by a matrix whose rank is at most $|A|$). The bound on $c_3$
follows since
\begin{equation*}
\|\delta_b\|_1 \le m + \sum_{a \in A} |\phi(b)_a| \|\gamma^a\|_1 \le
c_4 + |A| c_2 c_4 \le 2 c_2 c_4 |A|\qquad(b\in B).\qedhere
\end{equation*}
\end{proof}

\subsection{Orthogonal arrays}
\label{sec:orthogonal_arrays}

We prove Theorems~\ref{thm:OA_first_version} and \ref{thm:OA_count}
in this subsection. We recall the relevant definitions from the
introduction. A subset $T \subset [q]^n$ is an \emph{orthogonal
array of alphabet size $q$, length $n$ and strength $t$} if it
yields all strings of length $t$ with equal frequency when
restricted to any $t$ coordinates.  In other words, for any distinct
indices $i_1,\ldots,i_t \in [n]$ and any (not necessarily distinct)
values $v_1,\ldots,v_t \in [q]$,
$$
\left|\{x \in T: x_{i_1}=v_1,\ldots,x_{i_t}=v_t\}\right| = q^{-t} |T|.
$$

Orthogonal arrays fit into our general framework as follows. We take $B:=[q]^n$ and $V$ to be the space spanned by all functions of the form
\begin{equation*}
  f_{(I,v)}(x_1,\ldots, x_n) = \begin{cases}
    1& x_i=v_i\text{ for all $i\in I$}\\0&\text{Otherwise}
  \end{cases},
\end{equation*}
with $I\subset[n]$ a subset of size $t$ and $v\in[q]^I$.  With this
choice, a subset $T\subset B$ satisfying \eqref{eq:prob_def} is
precisely an orthogonal array of alphabet size $q$, length $n$ and
strength $t$.

We shall now verify the conditions of Theorem~\ref{thm:main} for
$V$. We note that the sum of all the above $f_{(I,v)}$ is a constant
function, thus verifying the constant functions condition. We
continue with the symmetry condition. Fix $x \in [q]^n$ and consider
the permutation $\pi_x \in S_B$ given by $\pi_x(b)=b+x \pmod{q}$,
where we apply the modulo $q$ coordinate-wise, and with the
convention that it maps $\Z$ to $[q]$. We will show that each
$\pi_x$ is a symmetry of $V$, which will establish the condition
since the group $\{\pi_x: x \in [q]^n\}$ acts transitively on $B$.
It suffices to show that for $(I,v)$ of the above type, we have
$f_{(I,v)}\circ\pi_x \in V$. Indeed,
$$
(f_{(I,v)}\circ\pi_x)(b) = f_{(I,v)}(b+x \textrm{ (mod }{q})) =
f_{(I,v')}(b)\in V,
$$
where $v'_i = v_i - x_i \pmod{q}$ for $i\in I$.

To verify the remaining conditions in Theorem~\ref{thm:main} we
choose a convenient basis for $V$. The above set of functions
$\{f_{I,v}\}$ is linearly dependent in general. Let
\begin{equation*}
A:=\{(I,v): |I| \le t, v \in [q-1]^{I}\},
\end{equation*}
and set $\phi_a=f_{(I,v)}$ for $a=(I,v) \in A$. Here, by
$f_{(\emptyset,\emptyset)}$ we mean the constant one function. We
will show that the $(\phi_a)$, $a\in A$, form a basis for $V$, that
the lattice $\LL(\phi)$ which they generate equals $\Z^A$ and that
they are locally decodable. The remaining conditions of
Theorem~\ref{thm:main} will follow easily from these properties.

\begin{claim} \label{cl:OA_A_spans_V} The span of the functions
$\{\phi_a\}_{a \in A}$ is $V$.
\end{claim}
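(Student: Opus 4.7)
The plan is to prove set equality of the span by verifying two containments: (i) each $\phi_a$ lies in $V$, and (ii) each generator $f_{(J,w)}$ of $V$ (with $|J|=t$ and $w\in[q]^J$) lies in $\text{Span}\{\phi_a:a\in A\}$. Both directions are reductions between generators corresponding to different subset sizes and different alphabets ($[q]$ versus $[q-1]$), carried out by two standard identities: a ``marginalization'' identity and an ``inclusion--exclusion'' identity.

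For the containment $\text{Span}\{\phi_a\}\subseteq V$, take $(I,v)\in A$ with $|I|<t$ (when $|I|=t$, $\phi_{(I,v)}$ is already a generator of $V$). I would pick any $J\subset[n]\setminus I$ with $|J|=t-|I|$ and use the identity
\begin{equation*}
f_{(I,v)}(x)=\sum_{w\in[q]^J} f_{(I\cup J,(v,w))}(x),
\end{equation*}
which holds because exactly one $w$ matches the coordinates of $x$ on $J$. This exhibits $\phi_{(I,v)}$ as an explicit integer combination of generators of $V$, so $\phi_{(I,v)}\in V$.

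For the reverse containment, I would prove the stronger statement that $f_{(I,w)}\in\text{Span}\{\phi_a:a\in A\}$ for \emph{every} pair with $|I|\le t$ and $w\in[q]^I$ (taking $|I|=t$ in particular handles the generators of $V$). The proof is induction on $k:=|\{i\in I:w_i=q\}|$. The base case $k=0$ is immediate since then $(I,w)\in A$, so $f_{(I,w)}=\phi_{(I,w)}$. For the inductive step, pick $i_0\in I$ with $w_{i_0}=q$ and use the inclusion--exclusion identity $\mathbf{1}\{x_{i_0}=q\}=1-\sum_{u=1}^{q-1}\mathbf{1}\{x_{i_0}=u\}$ to obtain
\begin{equation*}
f_{(I,w)}=f_{(I\setminus\{i_0\},\,w|_{I\setminus\{i_0\}})}-\sum_{u=1}^{q-1} f_{(I,w^{(u)})},
\end{equation*}
where $w^{(u)}$ agrees with $w$ outside $i_0$ and has $w^{(u)}_{i_0}=u$. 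Each term on the right has exactly $k-1$ coordinates equal to $q$, and each still satisfies $|\cdot|\le t$, so the induction hypothesis applies.

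There is no real obstacle here: the claim amounts to recognizing that the original generating set $\{f_{(I,v)}:|I|=t,v\in[q]^I\}$ is linearly dependent only because of the ``sum to one'' relation $\sum_{u=1}^q \mathbf{1}\{x_{i_0}=u\}=1$, and the set $A$ encodes a natural way to break this dependence by dropping one alphabet symbol and allowing smaller subsets. The slight care needed is to perform the induction on the number of $q$-valued coordinates (rather than, say, on $|I|$ alone) so that the identity above genuinely reduces the complexity of each intermediate term.
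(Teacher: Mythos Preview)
Your proof is correct and follows essentially the same approach as the paper: the reverse containment is handled by exactly the same induction on the number of coordinates equal to $q$, via the same identity $f_{(I,v)} = f_{(I\setminus\{i_0\}, v')} - \sum_{j=1}^{q-1} f_{(I,v^j)}$. The only difference is that you spell out the forward containment $\phi_a\in V$ via the marginalization identity, whereas the paper simply states this as ``clearly''; your version is a welcome elaboration but not a different route.
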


\begin{proof}
Clearly $\phi_a \in V$ for all $a \in A$. To see that the
$\{\phi_a\}_{a \in A}$ also span $V$, it suffices to show that any
$f_{(I,v)}$ with $|I| \le t$ and $v \in [q]^I$ is in the span of
$\{\phi_a\}_{a \in A}$. We do this by induction on the number of
elements in $v$ which are equal to $q$. Let $(I,v)$ have $|I|\le t$
and $v\in[q]^I$. First, if $v\in[q-1]^I$ then $(I,v)\in A$ by
definition. Now suppose $v\in [q]^I\setminus [q-1]^I$ and let
$i_0\in I$ be such that $v_{i_0}=q$. For $j\in[q-1]$ define $v^j$ by
$v^j_i=v_i$, $i\in I\setminus\{i_0\}$, and $v^j_{i_0}=j$. Define
$v'$ to be the restriction of $v$ to $I\setminus\{i_0\}$. Then
$$
f_{(I,v)} = f_{(I\setminus\{i_0\}, v')} - \sum_{j=1}^{q-1}
f_{(I,v^j)}
$$
and, by induction, the right hand side belongs to the linear span of
$\{\phi_a\}_{a \in A}$.
\end{proof}

\begin{claim}\label{cl:OA_local_decodability}
  The map $\phi$ is locally decodable with bound $2^t$ and $m=1$.
  Consequently, the $(\phi_a)$, $a\in A$, form a basis for $V$ and
  $\LL(\phi)=\Z^A$.
\end{claim}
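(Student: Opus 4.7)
The plan is to exhibit, for each $a = (I,v) \in A$, an explicit integer vector $\gamma^a \in \Z^B$ of $\ell_1$-norm at most $2^t$ satisfying $\phi(\gamma^a) = e^a$. This is exactly the local decodability condition of Definition~\ref{def:local_decodability} with $m = 1$ and bound $2^t$, and the remaining two assertions are immediate consequences. The key observation driving the construction is that $v \in [q-1]^I$ singles out the symbol $q$ as one that does not appear in the defining word of the cylinder $f_{(I,v)}$; we will use $q$ both as a ``difference partner'' on coordinates in $I$ and as a hard-coded value on coordinates outside $I$.

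Concretely, I would define
\begin{equation*}
\gamma^{(I,v)}_b := \prod_{i \in I}\bigl(\mathbf{1}[b_i = v_i] - \mathbf{1}[b_i = q]\bigr) \cdot \prod_{i \notin I} \mathbf{1}[b_i = q].
\end{equation*}
This vector is supported on the $2^{|I|}$ points $b \in B$ with $b_i \in \{v_i, q\}$ for $i \in I$ and $b_i = q$ for $i \notin I$, and its nonzero entries are $\pm 1$, so $\|\gamma^{(I,v)}\|_1 = 2^{|I|} \le 2^t$. The verification that $\phi(\gamma^{(I,v)}) = e^{(I,v)}$ is a short case analysis on $(I',v') \in A$ when expanding $\phi(\gamma^{(I,v)})_{(I',v')} = \sum_b \gamma^{(I,v)}_b f_{(I',v')}(b)$:
\begin{enumerate}
\item If $I' \not\subseteq I$, any $i^* \in I' \setminus I$ forces $b_{i^*} = v'_{i^*}$ (from $f_{(I',v')}(b)=1$) and simultaneously $b_{i^*} = q$ (from the support of $\gamma^{(I,v)}$), contradicting $v'_{i^*} \in [q-1]$; the sum vanishes.
\item If $I' \subsetneq I$, the constraint on each $i \in I'$ forces $v'_i = v_i$, while for $i \in I \setminus I'$ the coordinate $b_i$ ranges freely over $\{v_i, q\}$ and contributes a factor $(+1) + (-1) = 0$ per such index, so the sum vanishes.
\item If $I' = I$, the unique surviving configuration has $b|_I = v$ and $b|_{[n]\setminus I} \equiv q$, contributing $+1$ precisely when $v' = v$.
\end{enumerate}

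Once local decodability is established, the consequences follow at once. Every standard basis vector $e^a \in \Z^A$ lies in $\LL(\phi)$ because $e^a = \phi(\gamma^a)$, and the reverse inclusion $\LL(\phi) \subseteq \Z^A$ is automatic since $\phi$ is integer-valued; hence $\LL(\phi) = \Z^A$. For linear independence of $\{\phi_a\}_{a \in A}$, any relation $\sum_a c_a \phi_a \equiv 0$ on $B$ yields, upon pairing with $\gamma^{a'}$, the identity $0 = \sum_a c_a \phi(\gamma^{a'})_a = c_{a'}$ for every $a' \in A$. Combined with Claim~\ref{cl:OA_A_spans_V}, this shows $(\phi_a)_{a \in A}$ is a basis of $V$.

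The only real obstacle is guessing the right decoding vector: one needs to realize that the excluded symbol $q$ must play a double role, simultaneously serving as the subtracted term inside $I$ and as the ``fingerprint'' value outside $I$. Without the second role, larger cylinders $I' \supsetneq I$ would leak nonzero contributions; the choice $b_i = q$ for $i \notin I$ kills those terms via the constraint $v' \in [q-1]^{I'}$. After this insight is in place, the entire verification reduces to a one-line product expansion.
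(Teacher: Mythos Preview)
Your proof is correct and takes essentially the same approach as the paper: your product formula for $\gamma^{(I,v)}$ expands to exactly the paper's decoding vector $\gamma^a_b = (-1)^{|I|-|J|}$ when $b = b^{(J,v|_J)}$ (with $b_i = q$ outside $I$) and $0$ otherwise. The only difference is presentational---you verify $\phi(\gamma^a)=e^a$ by a direct case analysis on $I'$ versus $I$, while the paper phrases the same computation via the partial order $a'\le a$ and an inclusion--exclusion sum over $J\subset I$.
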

\begin{proof}
For $J\subset I\subset[n]$ and $v\in[q]^I$, we write $v|_J$ for the
restriction of $v$ to $J$. Define a partial order on $A$ by letting
$a'\le a$, for $a=(I,v)$ and $a'=(I',v')$, if $I'\subset I$ and
$v'=v|_{I'}$. For each $a=(I,v)\in A$, define an element
$b^a\in[q]^n$ by
\begin{equation*}
b^{(a)}_i := \begin{cases}
v_i & \textrm{if } i \in I\\
q & \textrm{if } i \notin I
\end{cases}.
\end{equation*}
The definition of $\phi$ implies that
\begin{equation*}
  \phi_{a'}(b^a) = 1_{(a'\le a)}\quad(a,a'\in A).
\end{equation*}
Define for each $a=(I,v) \in A$ the vector $\gamma^{a} \in \Z^B$ by
\begin{align*}
\gamma^a_b:=\begin{cases} (-1)^{|I|-|J|}&b=b^{(J,v_J)}\text{ for
some
}J\subset I\\
0&\text{otherwise}
\end{cases}.
\end{align*}
We have $\|\gamma^a\|_1=2^{|I|}\le 2^t$. We will show that
$\phi(\gamma^a)=e^a$ for each $a\in A$, thereby establishing the
local decodability claim. By the above, if $a=(I,v)$ and
$a'=(I',v')$ then
\begin{align*}
  \phi(\gamma^a)_{a'} &= \sum_{J\subset I}
  (-1)^{|I|-|J|}\phi_{a'}(b^{(J,v|_J)}) = \sum_{J\subset I}
  (-1)^{|I|-|J|}1_{((I',v')\le (J,v|_J))} =\\
  &= 1_{(v'=v|_{I'})}
  \sum_{I'\subset J\subset I} (-1)^{|I|-|J|}=1_{(v'=v|_{I'})}\sum_{j=0}^{|I|-|I'|} (-1)^{j} {|I|-|I'| \choose
j}=\delta_{a,a'}
\end{align*}
as we wanted to show. Local decodability implies that the
$(\phi_a)$, $a\in A$, have full rank, and together with
Claim~\ref{cl:OA_A_spans_V} we deduce that they form a basis for
$V$. Additionally, the fact that $\phi(\gamma^a)=e^a$ for every
$a\in A$ implies that $\LL(\phi)=\Z^A$.
\end{proof}
\begin{claim}\label{cl:OA_div_constant}
  The divisibility constant of $V$ equals $q^t$.
\end{claim}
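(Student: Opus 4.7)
The plan is to reduce the divisibility condition to a simple calculation using the basis $(\phi_a)_{a\in A}$ that was constructed above, exploiting the fact that $\LL(\phi)=\Z^A$ (Claim~\ref{cl:OA_local_decodability}). Since $(\phi_a)$ is a basis of $V$, the condition \eqref{eq:div_condition_basis_free} on $N$ is equivalent to
\[
\frac{N}{|B|}\sum_{b\in B}\phi(b)\in\LL(\phi)=\Z^A,
\]
so I need only find the minimal positive $N$ for which this vector has integer entries.

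The key computation is that for each $a=(I,v)\in A$, the coordinate sum $\sum_{b\in [q]^n}\phi_a(b)=\sum_{b}f_{(I,v)}(b)$ simply counts the number of $b\in[q]^n$ with $b_i=v_i$ for all $i\in I$, which is $q^{n-|I|}$. Hence
\[
\Bigl(\tfrac{N}{|B|}\sum_{b}\phi(b)\Bigr)_a=\frac{N\cdot q^{n-|I|}}{q^n}=\frac{N}{q^{|I|}},
\]
and this must be an integer for every $a=(I,v)\in A$, i.e.\ for every $0\le|I|\le t$. The binding constraint is $|I|=t$, which forces $q^t\mid N$; conversely, $q^t\mid N$ automatically makes $N/q^{|I|}$ an integer for all smaller $|I|$ since $q^{|I|}\mid q^t$.

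Thus the set of admissible $N$ is exactly $q^t\Z$, and the minimal positive such $N$ is $c_1=q^t$. The only ingredient beyond a one-line counting argument is the identification $\LL(\phi)=\Z^A$, which was the substance of Claim~\ref{cl:OA_local_decodability}; there is no further obstacle.
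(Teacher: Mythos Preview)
Your proposal is correct and follows essentially the same approach as the paper: both reduce the divisibility condition to $\frac{N}{|B|}\sum_b\phi(b)\in\LL(\phi)=\Z^A$ via Claim~\ref{cl:OA_local_decodability}, compute the $(I,v)$-coordinate of the average as $q^{-|I|}$, and conclude that the minimal admissible $N$ is $q^t$. Your write-up simply spells out the binding constraint at $|I|=t$ a bit more explicitly.
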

\begin{proof}
  It suffices to show that $q^t$ is the smallest positive integer $N$ for which
  \begin{equation*}
    \frac{N}{|B|}\sum_{b\in B} \phi(b) \in \LL(\phi)
  \end{equation*}
  Indeed, since $\LL(\phi)=\Z^A$ by
  Claim~\ref{cl:OA_local_decodability} this follows by noting that
  \begin{equation*}
    \frac{1}{|B|} \sum_{b\in B}\phi(b)_{(I,v)} = \frac{1}{q^n}|\{x\in [q]^n\,:\,x_i=v_i\; \forall i \in I\}| =
    q^{-|I|}.\qedhere
  \end{equation*}
\end{proof}

We are now in place to apply Theorem~\ref{thm:main}. The
divisibility constant of $V$ is $c_1=q^t$. The $\{\phi_a\}$, $a\in
A$, are a $1$-bounded integer basis for $V$ giving $c_2=1$. We have
\begin{equation}\label{eq:OA_dim_V}
  \dim(V) = |A| = \sum_{i=0}^t \binom{n}{i}(q-1)^i\le
  \binom{n}{t}q^t \le \left(\frac{eqn}{t}\right)^t
\end{equation}
with the next to last inequality following since $V$ was defined as
the span of $f_{(I,v)}$ with $|I|=t$ and $v\in [q]^I$. Since local
decodability holds with bound $c_4=2^t$, we deduce from
Claim~\ref{claim:LDC_implies_LDPC} that $V^\perp$ has a
$c_3$-bounded integer basis in $\ell_1$ with $c_3\le 2c_2 c_4|A|\le
2\big(\frac{2eqn}{t}\big)^t$. Hence we establish the existence of an
orthogonal array of alphabet size $q$, length $n$, strength $t$ and
size $|T| \le \big(\frac{cqn}{t}\big)^{ct}$ for some universal
constant $c>0$.

Lastly, we aim to use Theorem~\ref{thm:count_solutions} to count the
number of orthogonal arrays of a given size. To this end we need
only calculate $\rho(V)$. Observing that for our choice of $\phi$ we
have $\det(\LL(\phi))=1$ by Claim~\ref{cl:OA_local_decodability}, we
may apply formula~\eqref{eq:rho_formula} (with our choice of $\phi$)
to obtain
\begin{equation*}
  \rho(V)=\frac{\det(\LL(\phi))}{\sqrt{\det(\phi^t\phi)}} = \frac{1}{\sqrt{\det(\phi^t\phi)}}.
\end{equation*}
\begin{claim}\label{cl:OA_det_calc}
  $\det(\phi^t\phi)=q^{n\binom{n-1}{t}(q-1)^t}$.
\end{claim}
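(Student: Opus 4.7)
The plan is to reduce $\det(\phi^t\phi)$ to a product of small block determinants by passing to an auxiliary ``mean-zero'' basis of $V$ via a triangular change of variables. For $a=(S,w)\in A$, set
\[
\psi_a(x) := \prod_{i\in S}\bigl(\one{x_i=w_i}-1/q\bigr),
\]
with $\psi_{(\emptyset,\emptyset)}\equiv 1$. Writing $\one{x_i=v_i}=1/q+\bigl(\one{x_i=v_i}-1/q\bigr)$ and expanding the product defining $\phi_{(I,v)}$ gives
\[
\phi_{(I,v)} \;=\; \sum_{S\subset I} q^{-(|I|-|S|)}\,\psi_{(S,v|_S)}.
\]
Ordering $A$ so that $|I|$ is non-decreasing, this change of basis matrix is triangular with $1$'s on the diagonal, so $(\psi_a)_{a\in A}$ is also a basis of $V$ and $\det(\phi^t\phi)=\det(\psi^t\psi)$.

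The structural payoff is that $\psi^t\psi$ is block diagonal with blocks indexed by the subsets $S\subset[n]$ with $|S|\le t$: if $S\ne S'$ and $i_0\in S\setminus S'$, then $\ip{\psi_{(S,w)},\psi_{(S',w')}}$ contains the factor $\sum_{x_{i_0}\in[q]}(\one{x_{i_0}=w_{i_0}}-1/q)=0$. For the $S$-block, summing out the $n-|S|$ free coordinates and using the tensor structure in the rest yields $M_S = q^{n-|S|}\,K^{\otimes|S|}$, where $K$ is the $(q-1)\times(q-1)$ matrix with $K_{w,w}=(q-1)/q$ and $K_{w,w'}=-1/q$ for $w\ne w'$; equivalently $K=I-\frac{1}{q}J$. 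Its eigenvalues are $1$ (multiplicity $q-2$) and $1/q$, so $\det K = 1/q$, and applying $\det(K^{\otimes k})=(\det K)^{k(q-1)^{k-1}}$ gives
\[
\det M_S \;=\; q^{(n-|S|)(q-1)^{|S|}\,-\,|S|(q-1)^{|S|-1}}.
\]

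Multiplying over $|S|\le t$ yields
\[
\log_q\det(\phi^t\phi) \;=\; \sum_{s=0}^t \binom{n}{s}\Bigl[(n-s)(q-1)^s - s(q-1)^{s-1}\Bigr],
\]
and using $\binom{n}{s}(n-s)=n\binom{n-1}{s}$ and $\binom{n}{s}s=n\binom{n-1}{s-1}$ rewrites this as the telescoping difference $n\sum_{s=0}^t\binom{n-1}{s}(q-1)^s - n\sum_{s=0}^{t-1}\binom{n-1}{s}(q-1)^s = n\binom{n-1}{t}(q-1)^t$, which is exactly the claim. There is no serious obstacle: the heart of the argument is the orthogonal decomposition of $V$ into the $|S|$-fold tensors of mean-zero-per-coordinate functions, together with the triangular relation between $\phi$ and $\psi$; the rest is bookkeeping of exponents of $q$.
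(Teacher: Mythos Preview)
Your proof is correct and takes a genuinely different route from the paper's. The paper proceeds by induction on $(n,t)$: it partitions the index set $A(n,t)$ according to whether $n\in I$ and the value $v_n$, performs block column operations on the Gram matrix $R(n,t)=\phi^t\phi$ to expose a block-triangular structure, and derives the recursion $a_{n,t}=a_{n-1,t}+d_{n-1,t}+(q-1)a_{n-1,t-1}-d_{n-1,t-1}$ for $a_{n,t}:=\log_q\det R(n,t)$; the closed form is then checked against this recursion and the base cases $t=0$ and $n=t$.

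Your argument instead diagonalizes the problem in one shot via the ``centered'' basis $\psi_{(S,w)}=\prod_{i\in S}(\one{x_i=w_i}-1/q)$, which is exactly the tensor-product (discrete Fourier / Efron--Stein style) decomposition of $V$ into level sets by $|S|$. The unit-triangular relation between $\phi$ and $\psi$ preserves the determinant, and the orthogonality across distinct $S$ turns $\psi^t\psi$ into an explicit block-diagonal matrix of Kronecker powers of $K=I-\tfrac1q J$. This is arguably more conceptual: it identifies the eigenstructure of the Gram matrix directly and reduces the bookkeeping to a single telescoping sum in $s$, rather than a two-variable recursion. The paper's approach, on the other hand, avoids introducing a second basis and stays entirely within the given $\phi$; both reach the same exponent via essentially the same identity $\binom{n}{s}(n-s)=n\binom{n-1}{s}$.
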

It follows from the claim that
\begin{equation*}
  \rho(V) = q^{-n\binom{n-1}{t}(q-1)^t/2}.
\end{equation*}
Together with the calculation of $\dim(V)$ in \eqref{eq:OA_dim_V},
Theorem~\ref{thm:OA_count} now follows from
Theorem~\ref{thm:count_solutions}.

\begin{proof}[Proof of Claim~\ref{cl:OA_det_calc}]
It will be useful to let $n$ and $t$ vary in the proof of the claim.
Hence we shall write, for $n\ge 1$ and $0\le t\le n$, $A(n,t)$ for
$A$ and $\phi(n,t)$ for $\phi$. Denote
$R(n,t):=\phi(n,t)^t\phi(n,t)$ (where $\phi(n,t)^t$ denotes the
transpose of $\phi(n,t)$). Then $R(n,t)$ is an $A(n,t)\times A(n,t)$
matrix satisfying
\begin{equation}\label{eq:OA_R_def}
  R(n,t)_{(I,v), (I',v')} = |\{(x_1,\ldots, x_n)\in [q]^n\,:\,
  x_i=v_i\,\forall i\in I\text{ and }x_{i'}=v_{i'}\,\forall i'\in
  I'\}|.
\end{equation}
We also let
\begin{equation}\label{eq:OA_A_size}
  d_{n,t} := |A(n,t)| = \sum_{i=0}^t \binom{n}{i}(q-1)^i.
\end{equation}
Define $a_{n,t}:=\log_q(\det(R(n,t)))$ so that the claim states
\begin{equation}\label{eq:OA_a_n_t_def}
  a_{n,t} = n\binom{n-1}{t}(q-1)^t.
\end{equation}
We first establish this fact when $t=0$ or $n=t$. Indeed, if $t=0$
we have $A(n,t) = \{(\emptyset,\emptyset)\}$ and
$R(n,t)_{(\emptyset,\emptyset),(\emptyset,\emptyset)}=q^{n}$,
proving that $a_{n,0}=n$. If $n=t$, $|A(n,t)|=q^n$ and hence
$\phi(n,t)$ is a \emph{square} matrix with the property that the
lattice spanned by its rows, by
Claim~\ref{cl:OA_local_decodability}, is $\Z^{A(n,t)}$. Thus
$\det(\phi(n,t))=1$ and hence $\det(R(n,t))=1=q^0$, verifying
\eqref{eq:OA_a_n_t_def} in this case as well.

In the rest of the proof we will show that for any $n>t>0$ we have
\begin{equation}\label{eq:OA_det_recursion}
  a_{n,t} = a_{n-1,t} + d_{n-1,t} + (q-1) a_{n-1,t-1} - d_{n-1,t-1}.
\end{equation}
Noting that $d_{n-1,t} - d_{n-1,t-1} = \binom{n-1}{t}(q-1)^t$ by
\eqref{eq:OA_A_size}, the claim follows upon verifying that the
$(a_{n,t})$ defined by \eqref{eq:OA_a_n_t_def} satisfy this
recursion.

To prove \eqref{eq:OA_det_recursion}, fix $n>t>0$ and partition
$A(n,t)$ as follows:
\begin{align*}
  A(n,t)^0&:=\{(I,v)\in A(n,t)\,:\, n\notin I\},\\
  A(n,t)^j&:=\{(I,v)\in A(n,t)\,:\, n\in I\text{ and
  }v_n=j\},\quad j\in[q-1].
\end{align*}
Observe that $A(n-1,t-1)\subset A(n-1,t) = A(n,t)^0$. Denote by
$R(n,t)^{i,j}$ the sub-matrix of $R(n,t)$ whose rows are indexed by
$A(n,t)^i$ and whose columns are indexed by $A(n,t)^j$. By
\eqref{eq:OA_R_def} we have
\begin{equation}\label{eq:R_diag_blocks}
  R(n,t)^{i,j} = \begin{cases}
    q R(n-1,t)&i=j=0\\
    0&i,j\in[q-1], i\neq j\\
    R(n-1,t-1)&i,j\in [q-1], i=j
  \end{cases},
\end{equation}
where in the second case we mean the $0$ matrix, and in the third
case we identified $a=(I,v)\in A(n,t)^j$ with $a'=(I',v')\in
A(n-1,t-1)$ defined by letting $I'=I\setminus\{n\}$ and $v'_i=v_i$
for $i\in I'$. Summarizing these equalities we have
\begin{equation*}
  R(n,t) = \left(\begin{array}{ccccc}
                     q R(n-1, t) & R(n,t)^{0,1} & R(n,t)^{0,2} & \cdots & R(n,t)^{0,q-1} \\
                     R(n,t)^{1,0} & R' & 0 & \cdots & 0 \\
                     R(n,t)^{2,0} & 0 & R' & \cdots & 0 \\
                     \vdots & \vdots & \vdots & \ddots & \vdots \\
                     R(n,t)^{q-1,0} & 0 & 0 & \cdots & R' \\
                   \end{array}\right),
\end{equation*}
where the $i,j$ cell in the displayed matrix corresponds to the
sub-matrix indexed by $A(n,t)^i$ and $A(n,t)^j$, and where we
abbreviated $R':=R(n-1,t-1)$ for display purposes. In addition,
\eqref{eq:OA_R_def} implies that if $a_1\in A(n,t)^{j_1}$ and
$a_2\in A(n,t)^{j_2}$ then
\begin{equation}\label{eq:R_off_diag_blocks}
  R(n,t)_{a_1,a_2} = \begin{cases}
    \frac{1}{q}R(n,t)_{a_1,a_2'}&j_1=0, j_2\neq 0\\
    R(n,t)_{a_1,a_2'}&j_1, j_2\neq 0, j_1=j_2
  \end{cases}.
\end{equation}
Thus, if we subtract from each of the columns indexed by $a_2\in
A(n,t)\setminus A(n,t)^0$ the corresponding column indexed by
$a_2'\in A(n,t)^0$ times $\frac{1}{q}$ we obtain the matrix
$\tilde{R}(n,t)$ satisfying
\begin{equation*}
  \tilde{R}(n,t) = \left(\begin{array}{ccccc}
                     q R(n-1, t) & 0 & 0 & \cdots & 0 \\
                     R(n,t)^{1,0} & (1-\frac{1}{q})R' & -\frac{1}{q}R' & \cdots & -\frac{1}{q}R' \\
                     R(n,t)^{2,0} & -\frac{1}{q}R' & (1-\frac{1}{q})R' & \cdots & -\frac{1}{q}R' \\
                     \vdots & \vdots & \vdots & \ddots & \vdots \\
                     R(n,t)^{q-1,0} & -\frac{1}{q}R' & -\frac{1}{q}R' & \cdots & (1-\frac{1}{q})R' \\
                   \end{array}\right),
\end{equation*}
Denoting by $I$ the identity matrix and by ${\bf 1}$ the square
matrix containing all ones, both of dimension $q-1$, it follows that
\begin{align*}
  \det(R(n,t)) &= \det(\tilde{R}(n,t)) = \det(q
  R(n-1,t))\det\left(\left(I-\frac{1}{q}{\bf 1}\right)\otimes R(n-1,t-1)\right) =\\
  &= q^{a_{n-1,t}+d_{n-1,t}+(q-1)a_{n-1,t-1}}\det\left(I-\frac{1}{q}{\bf 1}\right)^{d_{n-1,t-1}}
  = q^{a_{n-1, t} + d_{n-1,t} + (q-1)a_{n-1,t-1} - d_{n-1,t-1}},
\end{align*}
where we used that if $A$ is an $m\times m$ matrix and $B$ is a
$k\times k$ matrix then $\det(A\otimes B)=\det(A)^k\det(B)^m$, and
where we calculated $\det(I-\frac{1}{q}{\bf 1})=\frac{1}{q}$ since
the only non-zero eigenvalue of ${\bf 1}$ is $q-1$. This establishes
the recursion \eqref{eq:OA_det_recursion} and finishes the proof of
the claim.
\end{proof}

\subsection{Designs}
\label{sec:designsproof}

We prove Theorems~\ref{thm:designs_first_version}
and~\ref{thm:counting_designs} in this subsection. We recall the
relevant definitions from the introduction. A (simple)
\emph{$t$-$(v,k,\lambda)$ design}, or $t$-design for short, is a
family of distinct subsets of $[v]$, where each set is of size $k$,
such that each $t$ elements belong to exactly $\lambda$ sets. In
other words, denoting by $\choosesq{v}{k}$ the family of all subsets
of $[v]$ of size $k$, a set $T \subset \choosesq{v}{k}$ is a
$t$-design if for any distinct elements $i_1,\ldots,i_t \in [v]$,
\begin{equation}\label{eq:design_def}
\left|\{s \in T: i_1,\ldots,i_t \in s\}\right|
    = \frac{\binom{k}{t}}{\binom{v}{t}} |T|=\lambda.
\end{equation}

Our general framework includes $t$-designs as follows. We take $B$
to be $\choosesq{v}{k}$ and $V$ to be the space spanned by all
functions of the form
\begin{equation*}
  f_{a}(b) = \begin{cases}
    1& a\subset b\\0&\text{Otherwise}
  \end{cases},
\end{equation*}
with $a\in \choosesq{v}{t}$. With this choice, a subset $T\subset B$
satisfying \eqref{eq:prob_def} is precisely a simple
$t$-$(v,k,\lambda)$ design. We choose $A=\choosesq{v}{t}$, set
$\phi_a=f_a$ and define a map $\phi:B \to \Z^A$ by
$\phi(b)_a=\phi_a(b)$.

Fix $v\ge 1$ and $1\le t\le k\le v$. We assume without loss of
generality that $k\le v-t$ since if $k>v-t$ we have
$|B|=\binom{v}{k}\le\binom{v}{t}$ and hence
Theorem~\ref{thm:designs_first_version} holds trivially and
Theorem~\ref{thm:counting_designs} holds vacuously (by taking the
complete design in Theorem~\ref{thm:designs_first_version}, and by
noting that necessarily $\binom{v}{k} - N\le \binom{v}{t}$ in
Theorem~\ref{thm:counting_designs}). This assumption will be needed
shortly to show that $\{\phi_a\}$, ${a \in A}$, form a basis for $V$
(that is, that they are linearly independent).

We shall now verify the conditions of Theorem~\ref{thm:main} for
$V$. First, the boundedness condition for $V$ trivially holds with
constant $c_2=1$ by our choice of $\phi$. Second, we observe that
$\sum_{a\in A} \phi_a$ is the vector with all coordinates equal to
$\binom{k}{t}$. Hence the constant functions assumption is
satisfied. Third, to establish the symmetry condition let $\pi \in
S_{[v]}$ be a permutation on $[v]$. $\pi$ acts in a natural way on
$B$ (by permuting $k$-sets) and on $A$ (by permuting $t$-sets). We
have that
$$
(\phi_a\circ\pi)(b)=\phi_a(\pi(b)) = 1_{a \subset \pi(b)} =
1_{\pi^{-1}(a) \subset b}=\phi_{\pi^{-1}(a)}(b),
$$
and in particular $\phi_a\circ\pi \in V$ for all $a \in A$. The
action of $S_{[v]}$ on $B$ is transitive, from which the symmetry
condition follows. We continue to find the divisibility constant of
$V$. We need the following result of Wilson \cite{Wilson73} and
Graver and Jurkat \cite{GraverJurkat}.
\begin{thm}
  The vector in $\Q^A$ all of whose coordinates equal $\lambda$
belongs to $\LL(\phi)$ if and only if
\begin{equation*}
  \binom{v-s}{t-s}\lambda \equiv 0\,\,\bmod{\binom{k-s}{t-s}}\quad \text{for
  all $0\le s\le t$}.
\end{equation*}
\end{thm}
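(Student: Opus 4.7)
The plan is to prove the equivalence by treating the two directions separately, with necessity being a short linear-functional computation and sufficiency being the substantive content of the theorem.

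For necessity, for each $0 \le s \le t$ and each $s$-subset $J \subset [v]$ I would introduce the linear functional $L_J : \Q^A \to \Q$ defined by
\begin{equation*}
L_J(x) := \sum_{a \in A,\, a \supset J} x_a.
\end{equation*}
Since $L_J$ has $\{0,1\}$ coefficients, it sends $\Z^A$ to $\Z$, and a direct count gives $L_J(\phi(b)) = \binom{k-s}{t-s}$ if $J \subset b$ and $0$ otherwise; in particular $L_J(\LL(\phi)) \subset \binom{k-s}{t-s}\Z$. On the other hand $L_J(\lambda\,\mathbf{1}) = \lambda \binom{v-s}{t-s}$, since there are $\binom{v-s}{t-s}$ many $t$-supersets of $J$ in $[v]$. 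Hence $\lambda\mathbf{1} \in \LL(\phi)$ forces $\binom{k-s}{t-s}$ to divide $\lambda\binom{v-s}{t-s}$ for every $s$.

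For sufficiency I would proceed by induction on $t$, with base case $t=0$ immediate since $A$ is a single point and $\phi(b)=1$ for all $b$, giving $\LL(\phi)=\Z$. For the inductive step, the cleanest conceptual route is to determine the Smith normal form of the $A\times B$ inclusion matrix $W$ with $W_{a,b}=\mathbf{1}[a\subset b]$, since $\lambda\mathbf{1}\in\LL(\phi)$ is equivalent to $\lambda\mathbf{1}$ lying in the integer image $W(\Z^B)$. Over $\Q$, the $S_{[v]}$-action decomposes $\Q^A = \bigoplus_{s=0}^t H_s$ with $\dim H_s = \binom{v}{s}-\binom{v}{s-1}$, and $WW^t$ acts as a known scalar on each $H_s$. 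One then aims to show that the elementary divisors of $W$ over $\Z$ are exactly $\binom{k-s}{t-s}$ with multiplicity $\dim H_s$ for $s=0,\dots,t$; since $\mathbf{1}$ lies in $H_0$, this would recover precisely the stated divisibility conditions. A more combinatorial route, which I would favor if the integer structure proves awkward to extract from the harmonic decomposition, is to construct the required integer combination $c\in\Z^B$ with $Wc=\lambda\mathbf{1}$ directly via an alternating inclusion–exclusion sum over chains of subsets of $[v]$ à la Graver–Jurkat.

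The main obstacle is producing the integer witnesses that upgrade the $\Q$-harmonic picture into a $\Z$-statement: the rational eigenvectors of $WW^t$ generically have large denominators, so one cannot read elementary divisors directly off the decomposition $\bigoplus H_s$. Wilson's original approach handles this via a double induction on $(v,t)$, relating $(v,k,t)$ to the two sub-problems $(v-1,k,t)$ and $(v-1,k-1,t-1)$ obtained by conditioning on whether a fixed element of $[v]$ lies outside or inside the varying $k$-block, and combining integer lifts from the two sub-problems using the divisibility conditions at consecutive levels $s$ and $s-1$. I would carry out this double induction, using the Smith normal form statement as a check on the computed elementary divisors along the way.
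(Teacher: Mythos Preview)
The paper does not actually prove this theorem: it is quoted as a known result of Wilson and of Graver--Jurkat and used as a black box to identify the divisibility constant $c_1$. So there is no ``paper's own proof'' to compare against here.

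That said, your proposal is sound and tracks the standard literature arguments. Your necessity computation via the functionals $L_J$ is correct and complete. For sufficiency, both routes you describe are viable; in fact the Smith normal form statement you are aiming for is exactly Wilson's diagonal-form theorem, which the paper quotes later (as Theorem~\ref{thm:Wilson_diagonal_form}) for the separate purpose of computing $\rho(V)$. Note that that theorem already gives you what you want directly: once the elementary divisors of the inclusion matrix are known to be $\binom{k-s}{t-s}$ with the stated multiplicities, and since $\mathbf{1}$ lies in the top $S_{[v]}$-isotypic piece $H_0$ (corresponding to $s=0$), the condition $\lambda\mathbf{1}\in\LL(\phi)$ reduces to the single divisibility $\binom{k}{t}\mid \lambda\binom{v}{t}$, and the remaining conditions for $s>0$ are automatically implied by this one via the identity $\binom{v-s}{t-s}/\binom{k-s}{t-s} = \binom{v}{t}/\binom{k}{t} \cdot \binom{k}{s}/\binom{v}{s}$ together with your necessity argument applied in reverse. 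So you may want to streamline the sufficiency direction by invoking the diagonal form once rather than running a separate double induction.
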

Define $c_1\ge 1$ to be the minimal integer such that
\begin{equation}\label{eq:design_c1_def}
\binom{k}{s}c_1 \equiv 0\,\,\bmod{\binom{v}{s}}\quad \text{for all
$1\le s\le t$}.
\end{equation}
We claim that $c_1$ is the divisibility constant of $V$. Indeed,
since all the coordinates of $\frac{N}{|B|}\sum_{b\in B}\phi(b)$
equal $N\binom{v-t}{k-t}/\binom{v}{k}=N\binom{k}{t}/\binom{v}{t}$ we
see that this vector belongs to $\LL(\phi)$ if and only if
\begin{equation*}
  \frac{\binom{v-s}{t-s}\binom{k}{t}}{\binom{k-s}{t-s}\binom{v}{t}}N
  = \frac{\binom{k}{s}}{\binom{v}{s}}N\in\Z\quad \text{ for
  all $0\le s\le t$}.
\end{equation*}
The case $s=0$ simply means that $N\in \Z$, thus a comparison with
\eqref{eq:design_c1_def} verifies that $c_1$ is the divisibility
constant of $V$.

It is useful to have a simple upper bound for $c_1$. Define
\begin{equation}
\lcm(t) := \lcm\left\{\binom{t}{s}\ :\ 0\le s\le t\right\}.
\end{equation}
Observing that
\begin{equation*}
  \frac{\binom{k}{s}}{\binom{v}{s}}\cdot\binom{v}{t}\binom{t}{s} =
  \binom{v-s}{v-t}\binom{k}{s}\in\Z\quad \text{for all
$1\le s\le t$}
\end{equation*}
we deduce that $c_1\le \binom{v}{t}\lcm(t)$. We note
that~\cite{Farhi09} shows that $\log(\lcm(t))$ is asymptotic to $t$
and mentions effective bounds for it. The next claim provides a
simple self-contained proof of a weaker bound which suffices for our
needs. It follows from the claim that $c_1\le \binom{v}{t}4^t\le (4e
v/t)^t$.

\begin{claim}\label{claim:lcm}
$\lcm(t) \le 4^{t}$ for $t \ge 1$.
\end{claim}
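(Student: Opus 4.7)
The plan is to reduce the claim to the classical Chebyshev-type upper bound on $\lcm(1,2,\ldots,n)$. Specifically, I will prove $\lcm(t) \le \lcm(1,2,\ldots,t) \le 4^t$, where the first inequality comes from a $p$-adic argument using Kummer's theorem and the second is the standard Erd\H{o}s induction on the middle binomial coefficient.

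\emph{Step 1 (divisibility reduction).} I will show $\lcm(t) \mid \lcm(1,2,\ldots,t)$. Fix a prime $p$. By Kummer's theorem, $v_p\bigl(\binom{t}{s}\bigr)$ equals the number of carries occurring when $s$ and $t-s$ are added in base $p$. Since their sum equals $t$, which has exactly $\lfloor \log_p t\rfloor + 1$ base-$p$ digits, a carry out of the leading position is impossible (it would force the sum to have an additional digit). Hence the number of carries, and therefore $v_p\bigl(\binom{t}{s}\bigr)$, is at most $\lfloor \log_p t\rfloor$. Taking the maximum over $s$ yields $v_p(\lcm(t)) \le \lfloor \log_p t\rfloor = v_p(\lcm(1,2,\ldots,t))$, and multiplying over primes gives the desired divisibility.

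\emph{Step 2 (Chebyshev-style bound).} It remains to show $\lcm(1,2,\ldots,n) \le 4^n$. The central binomial $\binom{2m+1}{m}$ is one of two equal middle terms in the expansion $(1+1)^{2m+1}$, so $\binom{2m+1}{m} \le 4^m$, and every prime $p$ with $m+1 < p \le 2m+1$ divides it (such $p$ appears once in $(2m+1)!$ and not in $m!(m+1)!$). An induction on $n$ then yields $\prod_{p \le n,\, p\text{ prime}} p \le 4^n$. To upgrade this to the full lcm, one decomposes $\log \lcm(1,\ldots,n) = \sum_{k \ge 1} \vartheta(n^{1/k})$; the terms $k \ge 2$ together contribute only $O(\sqrt{n}\log n)$, which is absorbed by the generous slack between $4^n$ and the true asymptotic rate $e^{n(1+o(1))}$. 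Combining the two steps gives $\lcm(t)\le \lcm(1,2,\ldots,t)\le 4^t$.

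\emph{Main obstacle.} The technical hurdle is entirely in Step 2, specifically the passage from the first Chebyshev function (product of primes $\le n$) to the full lcm (which includes higher prime powers). This is routine given how loose the target bound is relative to the true growth rate; if one preferred to avoid the prime-power bookkeeping, an alternative route is to first establish the identity $\lcm(t) = \lcm(1,\ldots,t+1)/(t+1)$ via the beta-integral expansion of $1/((t+1)\binom{t}{s})$, and then the extra factor of $t+1$ in the denominator already absorbs the prime-power slack when combined with $\prod_{p \le t+1} p \le 4^{t+1}$.
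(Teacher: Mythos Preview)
Your Step 1 is correct and clean: Kummer's theorem does give $v_p\binom{t}{s}\le\lfloor\log_p t\rfloor$, so $\lcm(t)\mid\lcm(1,\ldots,t)$.

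Step 2, however, has a real gap. Erd\H{o}s's argument bounds the first Chebyshev function $\vartheta(n)=\sum_{p\le n}\log p\le n\log 4$, not the second one $\psi(n)=\log\lcm(1,\ldots,n)$. Your passage from $\vartheta$ to $\psi$ appeals to ``slack between $4^n$ and $e^{n(1+o(1))}$'', but that slack is a PNT-level statement; the Erd\H{o}s inequality $\vartheta(n)\le n\log 4$ by itself carries no room to absorb the extra $\psi(n)-\vartheta(n)=\sum_{k\ge2}\vartheta(n^{1/k})$. Your alternative route fails for the same reason: the prime-power excess $\lcm(1,\ldots,t+1)/\prod_{p\le t+1}p$ grows like $e^{(1+o(1))\sqrt{t}}$, far larger than the factor $t+1$ you propose to cancel it with. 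The bound $\psi(n)\le n\log 4$ is indeed true (Hanson proved $\psi(n)<n\log 3$), but it is not the one-line consequence of Erd\H{o}s that you present.

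By contrast, the paper's proof is a self-contained induction on $t$ that avoids $\psi$ altogether. Using the factorization
\[
\binom{t}{s}=\binom{t}{\lfloor t/2\rfloor}\binom{\lfloor t/2\rfloor}{s}\Big/\binom{t-s}{\lceil t/2\rceil-s}\qquad(0\le s\le\lfloor t/2\rfloor),
\]
one sees that every $\binom{t}{s}$ divides $\binom{t}{\lfloor t/2\rfloor}\binom{\lfloor t/2\rfloor}{s}$, so $\lcm(t)\mid\binom{t}{\lfloor t/2\rfloor}\lcm(\lfloor t/2\rfloor)$. The induction then gives $\lcm(t)\le 2^t\cdot 4^{\lfloor t/2\rfloor}\le 4^t$. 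Your Step 1 would combine with Hanson's theorem to give a valid (and even sharper) proof, but as written the argument is incomplete; if you want a self-contained bound, the paper's halving induction is the cleaner route.
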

\begin{proof}
Assume by induction that the claim holds up to $t$ (checking also
that it holds for $t=1$) and let us prove
  it for $t$. For each $0\le s\le \lfloor\frac{t}{2}\rfloor$ we have
  \begin{equation*}
    \binom{t}{s} = \frac{t!}{s!(t-s)!} = \frac{t!}{\lceil
    \frac{t}{2}\rceil! s!} \frac{\lceil
    \frac{t}{2}\rceil!}{(t-s)!} = \frac{t!}{\lceil
    \frac{t}{2}\rceil! s!} \prod_{i=\lceil\frac{t}{2}\rceil +1}^{t-s} \frac{1}{i}.
  \end{equation*}
  Since the product of every $m$ consecutive integers is divisible
  by $m!$ (since $\binom{a}{m}$ is an integer for every $a$), using the symmetry of the binomial
  coefficients and the induction hypothesis we have
  \begin{align*}
    \lcm\left(\left\{\binom{t}{s}\ :\ 0\le s\le t\right\}\right) &= \lcm\left(\left\{\frac{t!}{\lceil
    \frac{t}{2}\rceil! s!} \prod_{i=\lceil\frac{t}{2}\rceil +1}^{t-s} \frac{1}{i}\ :\ 0\le s\le
    \left\lfloor\frac{t}{2}\right\rfloor\right\}\right) \le\\
    &\le \lcm\left(\left\{\frac{t!}{\lceil
    \frac{t}{2}\rceil! s!(\lfloor\frac{t}{2}\rfloor - s)!}\ :\ 0\le s\le
    \left\lfloor\frac{t}{2}\right\rfloor\right\}\right) =\\
    &= \binom{t}{\lfloor\frac{t}{2}\rfloor}\lcm\left(\left\{\binom{\lfloor\frac{t}{2}\rfloor}{s}\
:\ 0\le s\le
    \left\lfloor\frac{t}{2}\right\rfloor\right\}\right)\le 2^t
    4^{\lfloor\frac{t}{2}\rfloor}\le 4^t.\qedhere
  \end{align*}
\end{proof}

Thus, to verify the conditions of Theorem~\ref{thm:main}, it remains
only to verify the boundedness assumption for $V^\perp$. We do so by
applying the local decodability claim,
Claim~\ref{claim:LDC_implies_LDPC}, to the map $\phi$. Let $a \in
A=\choosesq{v}{t}$. Let $u \in \choosesq{v}{k+t}$ be a any set of
size $k+t$ such that $a \subset u$ (here we use our assumption that
$k\le v-t$). We denote by $\choosesq{u}{k} \subset B$ the family of
subsets of $u$ of size $k$. Define $\gamma_{a,u} \in \Z^B$ as
$$
\gamma_{a,u} := \sum_{s=0}^t \sum_{b \in \choosesq{u}{k}\,:\, |a
\cap b|=s} (-1)^{t-s} \frac{s! (k-s-1)!}{(k-t-1)!} \cdot u_b,
$$
where $u_b \in \{0,1\}^B$ is the unit vector with $1$ in coordinate
$b$.

\begin{claim}\label{claim:designs_gamma_vecs}
$\phi(\gamma_{a,u})=\frac{k!}{(k-t)!} \cdot e_a$ for all $a\in A$.
\end{claim}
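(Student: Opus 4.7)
My plan is to directly compute $\phi(\gamma_{a,u})_{a'}$ for each $a' \in A$. Since $(\gamma_{a,u})_b$ is supported on $b \in \binom{u}{k}$, I have
\[
\phi(\gamma_{a,u})_{a'} = \sum_{\substack{b \in \binom{u}{k} \\ a' \subset b}} (\gamma_{a,u})_b = \sum_{s=0}^t (-1)^{t-s}\frac{s!(k-s-1)!}{(k-t-1)!}\, N_s(a,a'),
\]
where $N_s(a,a') := \big|\{b \in \binom{u}{k}: a' \subset b,\ |a \cap b| = s\}\big|$. I split into cases based on how $a'$ relates to $a$ and $u$.

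If $a' \not\subset u$, then $N_s(a,a') = 0$ for all $s$, so the sum vanishes. If $a' = a$, then $a \subset b$ forces $|a \cap b| = t$, so only $s = t$ contributes, with $N_t(a,a) = \binom{|u|-t}{k-t} = \binom{k}{t}$; this yields $\phi(\gamma_{a,u})_a = t!\binom{k}{t} = k!/(k-t)!$ as claimed. The remaining case is $a' \subset u$ with $a' \neq a$; here I set $r = |a \cap a'| \in \{0,\ldots,t-1\}$. Writing $b = a' \cup c$ with $c \subset u \setminus a'$ (a set of size $k$ partitioned into $a \setminus a'$ of size $t-r$ and $u \setminus (a \cup a')$ of size $k-t+r$), the condition $|a \cap b| = s$ becomes $|c \cap (a \setminus a')| = s-r$, giving
\[
N_s(a,a') = \binom{t-r}{s-r}\binom{k-t+r}{k-t-s+r}.
\]

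The goal in this main case is to show the resulting alternating sum vanishes. Substituting $j = s - r$, setting $m := t - r \ge 1$ and $n := k - t \ge 0$, and using $\binom{n+r}{n-j} = \frac{(n+r)!}{(n-j)!(r+j)!}$ to cancel the $(r+j)!$ factor, the sum reduces (for $n \ge 1$) to
\[
\phi(\gamma_{a,u})_{a'} = \frac{(n+r)!}{(n-1)!}\sum_{j=0}^m (-1)^{m-j}\binom{m}{j}\, f(j),
\]
where $f(j) := (n-j+1)(n-j+2)\cdots(n+m-j-1)$ is a product of $m-1$ factors, each linear in $j$. Hence $f$ is a polynomial in $j$ of degree at most $m-1$, so the $m$-th forward difference $\sum_{j=0}^m(-1)^{m-j}\binom{m}{j}f(j)$ vanishes. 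For the edge case $n = 0$ (i.e.\ $k = t$), the factor $(k-s-1)!/(k-t-1)!$ (interpreted as the product $(k-s-1)(k-s-2)\cdots(k-t)$) is zero whenever $s < t$, so $\gamma_{a,u}$ collapses to $t! \cdot u_a$ and the claim is immediate.

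The main obstacle is the polynomial identity in the third case. I expect the cleanest route is the finite-difference reformulation above; one should also check carefully that for $n \ge 1$ and $j$ in the range $n < j \le m$, the extended polynomial $f(j)$ agrees with the original expression (both vanish, since $f(j)$ then contains the factor $0$ among $(n-j+1),\ldots,(n+m-j-1)$, while $\binom{n+r}{n-j} = 0$ in the original sum), so the finite-difference identity applies term-by-term to the sum as actually written.
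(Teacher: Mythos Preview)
Your proof is correct and follows essentially the same approach as the paper: both split into the cases $a' \not\subset u$, $a' = a$, and $a' \subset u$ with $a' \neq a$, compute the same count $N_s(a,a') = \binom{t-r}{s-r}\binom{k-t+r}{k-t-s+r}$ (the paper writes $\ell$ for your $r$), and reduce to an alternating binomial sum. The only substantive difference is in how the vanishing of that sum is established: the paper rewrites it as $\sum_{i=0}^{t-\ell}(-1)^i\binom{t-\ell}{i}\binom{k-t-1+i}{t-\ell-1}$ and invokes an auxiliary identity (their Claim~\ref{cl:binomial_sum_designs}, proved by double induction), whereas you recognize the sum directly as the $m$-th finite difference of a degree-$(m-1)$ polynomial. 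Your route is slightly cleaner and avoids the auxiliary lemma; it also explicitly handles the edge case $k=t$, where the paper's expression $(k-t-1)!$ is formally undefined.
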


Note that the claim implies, in particular, that the $(\phi_a)$,
$a\in A$, are independent and thus
\begin{equation}\label{eq:design_dim_V}
  \dim(V) = |A| = \binom{v}{t}.
\end{equation}
The next technical claim is used in the proof of
Claim~\ref{claim:designs_gamma_vecs}. We set $\binom{n}{m}=0$
whenever $n<m$.

\begin{claim}\label{cl:binomial_sum_designs}
Let $a>b \ge 0$ and $c \ge 0$. Then
$$
\sum_{i=0}^a (-1)^{i} \binom{a}{i} \binom{c+i}{b}=0.
$$
\end{claim}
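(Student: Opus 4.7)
The plan is to use a short coefficient-extraction argument. I would write the binomial as an extraction of a coefficient,
\begin{equation*}
\binom{c+i}{b} = [x^b](1+x)^{c+i},
\end{equation*}
swap the order of summation, and apply the binomial theorem to the inner sum in $i$.

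Concretely, the key computation is
\begin{equation*}
\sum_{i=0}^{a}(-1)^i\binom{a}{i}\binom{c+i}{b}
= [x^b](1+x)^c \sum_{i=0}^{a}\binom{a}{i}\bigl(-(1+x)\bigr)^i
= [x^b](1+x)^c\bigl(1-(1+x)\bigr)^a
= (-1)^a[x^{b-a}](1+x)^c,
\end{equation*}
and since $b<a$ we have $b-a<0$, so the coefficient is zero. The whole argument is essentially one line once one recognizes that the polynomial $i\mapsto\binom{c+i}{b}$ has degree $b$, while the operator $\sum_{i=0}^a(-1)^i\binom{a}{i}(\cdot)$ is (up to sign) the $a$-fold finite difference and therefore annihilates any polynomial of degree less than $a$; the generating function manipulation above just makes this observation concrete.

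There is no real obstacle: the only thing to be careful about is the convention $\binom{n}{m}=0$ for $n<m$ (already fixed in the statement), which ensures that the upper limit of the sum can safely be taken to be $a$ regardless of the relative sizes of $b,c,a,i$. Once that is noted, the identity follows immediately from the coefficient extraction above.
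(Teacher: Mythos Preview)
Your proof is correct and takes a genuinely different route from the paper. The paper argues by induction: setting $f(a,b,c)=\sum_{i=0}^a(-1)^i\binom{a}{i}\binom{c+i}{b}$, it uses Pascal's rule $\binom{c+i}{b}=\binom{c-1+i}{b}+\binom{c-1+i}{b-1}$ to obtain the recursion $f(a,b,c)=f(a,b,c-1)+f(a,b-1,c-1)$, and then verifies the base cases $b=0$ (plain alternating binomial sum) and $c=0$ (where $\binom{a}{i}\binom{i}{b}=\binom{a}{b}\binom{a-b}{i-b}$ reduces to another alternating sum). Your coefficient-extraction argument is shorter and more conceptual: it directly exhibits the sum as the $a$-th finite difference of a degree-$b$ polynomial and makes the vanishing transparent in one line. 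The paper's proof, by contrast, uses nothing beyond Pascal's identity and so is a bit more self-contained, but at the cost of the extra bookkeeping of the double induction and two base cases.
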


\begin{proof}
Let $f(a,b,c) = \sum_{i=0}^a (-1)^i \binom{a}{i} \binom{c+i}{b}$. If
$b,c>0$ we have $\binom{c+i}{b}=\binom{c-1+i}{b}+\binom{c-1+i}{b-1}$
and hence $f(a,b,c) = f(a,b,c-1)+f(a,b-1,c-1)$.  So, it is enough to
verify the claim whenever $b=0$ or $c=0$. If $b=0$ then
$f(a,0,c)=\sum_{i=0}^a (-1)^i \binom{a}{i}=0$ since $a \ge 1$. If
$c=0$ then $f(a,b,0)=\sum_{i=b}^a (-1)^i \binom{a}{i} \binom{i}{b} =
\binom{a}{b} \sum_{i=b}^a (-1)^i \binom{a-b}{i-b}=0$.
\end{proof}

\begin{proof}[Proof of Claim~\ref{claim:designs_gamma_vecs}]
It is clear from the definition that $\phi(\gamma_{a,u})_{a'}=0$ if
$a' \not \subset u$. So, we restrict our attention to $a' \subset
u$. For $a'=a$ the contribution is only from sets with $s=|a \cap
b|=t$, of which there are ${k \choose k-t}$, and hence
$$
\phi(\gamma_{a,u})_a = {k \choose k-t} t! = \frac{k!}{(k-t)!}.
$$
We now need to verify that $\phi(\gamma_{a,u})_{a'}=0$ for all $a'
\subset u, a' \ne a$. Let us denote $\ell=|a \cap a'|$ where $0 \le
\ell < t$. The contribution to $\phi(\gamma_{a,u})_{a'}$ comes only
from sets $b$ for which $a' \subset b$. The number of these sets
with $|a \cap b|=s$ is ${t-\ell \choose s-\ell}{k-t+\ell \choose
k-t-s+\ell}={t-\ell \choose t-s}{k-t+\ell \choose s}$. Note,
moreover, that $s \ge \ell$. We have
\begin{align*}
\phi(\gamma_{a,u})_{a'}
&= \sum_{s=\ell}^{t} {t-\ell \choose t-s}{k-t+\ell \choose s} \cdot (-1)^{t-s} \frac{s! (k-s-1)!}{(k-t-1)!}\\
&= \frac{(t-\ell-1)!(k-t+\ell)!}{(k-t-1)!}\sum_{s=\ell}^t
(-1)^{t-s}\binom{t-\ell}{t-s}\binom{k-s-1}{t-\ell-1}\\
&= \frac{(t-\ell-1)!(k-t+\ell)!}{(k-t-1)!}\sum_{i=0}^{t-\ell} (-1)^i
\binom{t-\ell}{i}\binom{k-t-1+i}{t-\ell-1}.
\end{align*}
Recalling that $t-\ell>0$ we now apply
Claim~\ref{cl:binomial_sum_designs} with
$a=t-\ell,b=t-\ell-1,c=k-t-1$ and conclude that
$\phi(\gamma_{a,u})_{a'}=0$.
\end{proof}

In order to obtain tight bounds, we will divide $\gamma_{a,u}$ by a
factor common to all the coefficients appearing in it. Note that
  $$
    \frac{s! (k-s-1)!}{(k-t-1)!} =
    \binom{k-s-1}{k-t-1}\binom{t}{s}^{-1} t!.
  $$
and hence
\begin{equation}
\gamma'_{a,u} := \frac{\lcm(t)}{t!} \cdot \gamma_{a,u} \in\Z^B
\end{equation}
We continue to show that $\phi$ is locally decodable. We have
$$
\phi(\gamma'_{a,u}) = {k \choose t} \lcm(t) \cdot e_a
$$
and we recall that ${k \choose t} \lcm(t)\le {k \choose t}4^t$ by
Claim~\ref{claim:lcm}. To bound $\|\gamma'_{a,u}\|_1$ observe that
the number of $b \in \choosesq{u}{k}$ for which $|a \cap b|=s$ is
${t \choose s}{k \choose k-s}={t \choose s}{k \choose s}$; and
$\frac{s! (k-s-1)!}{t! (k-t-1)!} = {k-1 \choose t}/{k-1 \choose s}$.
Hence
\begin{align*}
\|\gamma'_{a,u}\|_1 &= \lcm(t) \sum_{s=0}^t {t \choose s}{k \choose
s}\frac{{k-1 \choose t}}{{k-1 \choose s}} \le 4^t\frac{k}{k-t}{k-1
\choose t} \sum_{s=0}^t {t \choose s} = 8^t {k \choose t}
\end{align*}
implying that $\phi$ is locally decodable with bound $c_4 = 8^t {k
\choose t} \le (8e \cdot k/t)^t$. Finally, since $|A|= {v \choose t}
\le (ev/t)^t$ we obtain from Claim~\ref{claim:LDC_implies_LDPC} that
$V^\perp$ has a $c_3$-bounded integer basis in $\ell_1$ with $c_3
\le 2 c_2 c_4 |A| \le (4e v/t)^{2t}$.

We have verified the conditions of Theorem~\ref{thm:main} with
$|A|\le (ev/t)^t, c_1\le (4e v/t)^t,c_2=1, c_3 \le (4e v/t)^{2t}$
and thus we establish the existence of a simple $t$-$(v,k,\lambda)$
design of size $|T| \le (cv/t)^{ct}$ for some universal constant
$c>0$, proving Theorem~\ref{thm:designs_first_version}.

We turn to estimate the number of designs of a given size via
Theorem~\ref{thm:count_solutions}. To this end we consider $\phi$ as
a $B\times A$ matrix and need to calculate the parameter $\rho(V)$
from \eqref{eq:rho_formula}. We rely on a theorem of
Wilson~\cite{Wilson90} giving a diagonal form of $\phi^t$ and on a
theorem of Bapat~\cite{Bapat} calculating the eigenvalues of
$\phi^t\phi$.
\begin{thm}\label{thm:Wilson_diagonal_form}\cite[Theorem 2]{Wilson90}
There exist a $\binom{v}{t}\times\binom{v}{t}$ matrix $E$ and a
$\binom{v}{k}\times\binom{v}{k}$ matrix $F$, both with integer
entries and satisfying $|\det(E)|=|\det(F)|=1$, such that
$M:=E\phi^t F$ has $M_{ij}=0$ if $i\neq j$ and has diagonal entries
$\binom{k-s}{t-s}$ with multiplicity $\binom{v}{s}-\binom{v}{s-1}$
for $0\le s\le t$ (with $\binom{v}{-1}:=0$).
\end{thm}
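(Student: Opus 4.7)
The plan is to recognize $\phi^t$ as the classical $t$-versus-$k$ subset inclusion matrix $W_{t,k}$ (rows indexed by $t$-subsets of $[v]$, columns by $k$-subsets, entry $1$ iff the former is contained in the latter) and to compute its integer canonical form. The theorem then amounts to exhibiting unimodular row and column operations over $\Z$ that bring $W_{t,k}$ to a diagonal shape with the listed entries and multiplicities.

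The combinatorial backbone is the identity
\[
W_{s,t}\,W_{t,k} \;=\; \binom{k-s}{t-s}\,W_{s,k}\qquad(0\le s\le t\le k),
\]
obtained by counting, for each pair $(\alpha,\beta)$ with $\alpha\in\binom{[v]}{s}$, $\beta\in\binom{[v]}{k}$, $\alpha\subset\beta$, the number of intermediate $t$-subsets. Working first over $\Q$, I would decompose $\Q^{\binom{[v]}{i}}$ as an $S_v$-module into Specht modules $S^{(v-s,s)}$ for $0\le s\le i$, each of dimension $\binom{v}{s}-\binom{v}{s-1}$, which already matches the claimed multiplicities. The transposes $W_{s,t}^{\mathsf{T}}$ are $S_v$-equivariant and embed $S^{(v-s,s)}$ isotypically into $\Q^{\binom{[v]}{t}}$; by Schur's lemma $W_{t,k}$ must act as a scalar on each such copy, and the displayed identity forces the scalar on the $s$-th piece to be $\binom{k-s}{t-s}$. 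This produces the required diagonalization over $\Q$.

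The main obstacle is upgrading this rational diagonalization to an integer one with $|\det E|=|\det F|=1$. For this I would construct explicit $\Z$-bases piece by piece: in $\Z^{\binom{[v]}{s}}$, build a family of ``signed difference'' vectors indexed by standard Young tableaux of shape $(v-s,s)$ (equivalently, by certain ordered pairs of distinguished elements) that generate the sublattice $\ker(W_{s-1,s})\cap\Z^{\binom{[v]}{s}}$ over $\Z$. Pushing these forward along $W_{s,t}^{\mathsf{T}}$ and $W_{s,k}^{\mathsf{T}}$ yields compatible integer bases of the corresponding sublattices of $\Z^{\binom{[v]}{t}}$ and $\Z^{\binom{[v]}{k}}$. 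Concatenating for $s=0,\ldots,t$ on the $t$-side should exhaust $\Z^{\binom{[v]}{t}}$ and give a unimodular $E$, while on the $k$-side one completes by any $\Z$-basis of a primitive complement to obtain the unimodular $F$. An induction on $t$, peeling off the top isotypic piece by means of $W_{t-1,t}W_{t,k}=(k-t+1)W_{t-1,k}$, then establishes that the resulting diagonal carries exactly the claimed entries.

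The hard part is purely arithmetic: one must show that the explicit signed-difference vectors actually generate the \emph{full} integer kernel of $W_{s-1,s}$ (rather than a proper finite-index sublattice), and that the transported basis on the $k$-side sits inside $\Z^{\binom{[v]}{k}}$ as a primitive sublattice. Neither fact is a formal consequence of the rational picture; each rests on the specific combinatorics of the chosen basis, and this is exactly where Wilson's original argument in \cite{Wilson90} does the real work.
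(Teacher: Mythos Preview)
The paper does not contain a proof of this theorem. Theorem~\ref{thm:Wilson_diagonal_form} is quoted verbatim from \cite{Wilson90} and used as a black box: immediately after stating it (together with Bapat's eigenvalue computation, Theorem~\ref{thm:design_eigenvalues}), the paper writes ``The two theorems immediately imply that $\rho(V)=\ldots$'' and moves on. The only discussion is a short remark sketching how one \emph{might} approach both results via the decomposition of the Young modules $U_{(v-k,k)}$ and $U_{(v-t,t)}$ into irreducibles $V_{(v-s,s)}$, together with Schur's lemma; the paper explicitly says it ``neither require[s] nor develop[s] this approach here.''

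Your proposal is therefore not comparable to a proof in the paper, because there is none. That said, the rational part of your sketch (the identity $W_{s,t}W_{t,k}=\binom{k-s}{t-s}W_{s,k}$, the isotypic decomposition, and Schur's lemma pinning down the scalars) is exactly the representation-theoretic outline the paper alludes to in its remark. You correctly identify that the genuine content of Wilson's theorem is the \emph{integral} statement, and you are honest that your plan defers the hard step (primitivity of the signed-difference lattices and unimodularity of the resulting bases) to Wilson's original argument. As a proof proposal this is incomplete in precisely the place you flag: without carrying out the combinatorial verification that your chosen $\Z$-bases are saturated, you have reproduced only the rational diagonalization, which is Theorem~\ref{thm:design_eigenvalues} rather than Theorem~\ref{thm:Wilson_diagonal_form}.
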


\begin{thm}\label{thm:design_eigenvalues}\cite[Theorem 4]{Bapat}
The eigenvalues of $\phi^t\phi$ are
$\binom{k-s}{t-s}\binom{v-t-s}{k-t}$ with multiplicity
$\binom{v}{s}-\binom{v}{s-1}$ for $0\le s\le t$ (with
$\binom{v}{-1}:=0$).
\end{thm}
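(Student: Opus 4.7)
The plan is to exploit that $\phi^t\phi$ commutes with the natural $S_v$-action on $\Q^{\binom{[v]}{t}}$, since its $(T,T')$-entry $\binom{v-|T\cup T'|}{k-|T\cup T'|}$ depends only on $|T\cap T'|$; thus $\phi^t\phi$ lies in the Bose--Mesner algebra of the Johnson scheme. Let $I_{a,b}$ (for $a\ge b$) denote the $\binom{v}{a}\times\binom{v}{b}$ inclusion matrix, $I_{a,b}(A,B)=\mathbf{1}_{B\subseteq A}$, so that $\phi=I_{k,t}$. I will use the classical harmonic decomposition as $S_v$-modules
\begin{equation*}
\Q^{\binom{[v]}{t}} \;=\; \bigoplus_{s=0}^{t} I_{t,s}(H_s),\qquad H_s := \ker(I_{s,s-1}^t)\subseteq\Q^{\binom{[v]}{s}},
\end{equation*}
in which each summand $I_{t,s}(H_s)$ is irreducible, isomorphic to the Specht module $S^{(v-s,s)}$, and of dimension $\binom{v}{s}-\binom{v}{s-1}$. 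Schur's lemma then forces $\phi^t\phi$ to act as a single scalar $\lambda_s$ on $I_{t,s}(H_s)$, so it suffices to verify $\lambda_s=\binom{k-s}{t-s}\binom{v-t-s}{k-t}$; the multiplicities will then follow automatically.

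To pin down $\lambda_s$ I combine two identities. First, the elementary $I_{k,t}I_{t,s}=\binom{k-s}{t-s}I_{k,s}$ (each $s$-set in a $k$-set extends to $\binom{k-s}{t-s}$ intermediate $t$-sets) gives, for $h\in H_s$,
\begin{equation*}
\phi^t\phi\cdot I_{t,s}h \;=\; I_{k,t}^t I_{k,t} I_{t,s} h \;=\; \binom{k-s}{t-s}\,I_{k,t}^t I_{k,s} h,
\end{equation*}
reducing the task to checking $I_{k,t}^t I_{k,s}h=\binom{v-t-s}{k-t}\,I_{t,s}h$ for $h\in H_s$. Second, the harmonicity $I_{s,s-1}^t h=0$ propagates, via the factorization $(s-u)I_{s,u}^t=I_{s-1,u}^t I_{s,s-1}^t$, to $I_{s,u}^t h=0$ for every $u<s$. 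Fix $T\in\binom{[v]}{t}$ and for $0\le j\le s$ set $a_j(T):=\sum_{|S|=s,\,|S\cap T|=j}h(S)$, so that $a_s(T)=(I_{t,s}h)(T)$. Splitting by $j=|T\cap S|$ expands the desired identity as
\begin{equation*}
(I_{k,t}^t I_{k,s}h)(T) \;=\; \sum_{j=0}^{s}\binom{v-t-s+j}{k-t-s+j}\,a_j(T).
\end{equation*}

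Double-counting $u$-subsets of $T$ turns the harmonicity $I_{s,u}^t h=0$ into the triangular system $\sum_{j\ge u}\binom{j}{u}a_j(T)=0$ for each $u<s$, which M\"obius-inverts to $a_j(T)=(-1)^{s-j}\binom{s}{j}a_s(T)$. Substituting reduces the scalar to $\sum_{i=0}^{s}(-1)^{i}\binom{s}{i}\binom{v-t-i}{k-t-i}$, which equals $\binom{v-t-s}{k-t}$ by the standard identity $\sum_{i=0}^{s}(-1)^{i}\binom{s}{i}\binom{A-i}{B-i}=\binom{A-s}{B}$ (extract $[x^B]$ from $1=((1+x)-x)^s$ and multiply by $(1+x)^{A-s}$). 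Combining yields $\lambda_s=\binom{k-s}{t-s}\binom{v-t-s}{k-t}$ with multiplicity $\binom{v}{s}-\binom{v}{s-1}$. The main obstacle is this combinatorial step: the representation-theoretic scaffolding only guarantees a single scalar appears on each irreducible component, and the identity $I_{k,t}I_{t,s}=\binom{k-s}{t-s}I_{k,s}$ only explains the first factor (in harmony with the Wilson diagonal form, Theorem~\ref{thm:Wilson_diagonal_form}); the second factor has to be squeezed from harmonicity via the binomial identity above.
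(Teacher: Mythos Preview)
Your proof is correct. Note, however, that the paper does not actually prove this theorem: it is quoted from Bapat~\cite{Bapat} and only used as a black box. The paper does append a brief remark outlining precisely the approach you take---identify $\Q^{\binom{[v]}{t}}$ with the Young module $U_{(v-t,t)}$, decompose it as $\bigoplus_{s=0}^t V_{(v-s,s)}$, invoke Schur's lemma to reduce to a scalar on each irreducible piece, and then identify the two factors $\binom{k-s}{t-s}$ and $\binom{v-t-s}{k-t}$ as the scalars by which $\phi^t$ and $\phi$ act---but explicitly declines to carry out the details. Your write-up supplies exactly those details: the identity $I_{k,t}I_{t,s}=\binom{k-s}{t-s}I_{k,s}$ isolates the first factor, and your harmonicity/M\"obius computation (culminating in $\sum_{i=0}^s(-1)^i\binom{s}{i}\binom{v-t-i}{k-t-i}=\binom{v-t-s}{k-t}$) isolates the second. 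So your argument is not a different route from the paper's; it is a complete execution of the sketch the paper leaves unexecuted.
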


The two theorems immediately imply that
\begin{equation*}
  \rho(V) = \frac{\det(\LL(\phi))}{\sqrt{\det(\phi^t\phi)}} =
  \frac{\prod_{s=0}^t
  \binom{k-s}{t-s}^{\binom{v}{s}-\binom{v}{s-1}}}{\prod_{s=0}^t\left[\binom{k-s}{t-s}\binom{v-t-s}{k-t}\right]^{\frac{1}{2}\left(\binom{v}{s}-\binom{v}{s-1}\right)}}
  = \prod_{s=0}^t
  \left[\frac{\binom{k-s}{t-s}}{\binom{v-t-s}{k-t}}\right]^{\frac{1}{2}\left(\binom{v}{s}-\binom{v}{s-1}\right)}.
\end{equation*}
Theorem~\ref{thm:counting_designs} now follows by an application of
Theorem~\ref{thm:count_solutions} (recalling that
$\dim(V)=\binom{v}{t}$ by \eqref{eq:design_dim_V}).

We remark briefly on a possible approach to proving
Theorems~\ref{thm:Wilson_diagonal_form} and
\ref{thm:design_eigenvalues} via representation theory (see
Section~\ref{sec:defs} for some background), though we neither
require nor develop this approach here. One may naturally identify
the set $\choosesq{v}{k}$ with tabloids of shape $(v-k,k)$ as the
numbers appearing in the second row of the tabloid. Thus, $\R^B$ may
be identified with the Young module $U_{(v-k,k)}$. Similarly, $\R^A$
may be identified with $U_{(v-t,t)}$. For any $m$, the decomposition
of $U_{(v-m,m)}$ into irreducible representations is $U_{(v-m,m)} =
\oplus_{s=0}^m V_{(v-s,s)}$. Since $\phi$ intertwines the action of
$S_n$ on $U_{(v-k,k)}$ and $U_{(v-t,t)}$, Schur's lemma implies that
$\phi$ and $\phi^t$ are diagonal in the basis of irreducible
representations for $U_{(v-k,k)}$ and $U_{(v-t,t)}$, and act as
scalars from $V_{(v-s,s)}$ to itself, $0\le s\le t$. Finally, for
each $s$, the scalars appearing in these actions can be determined
by considering the action on some particular vector in
$V_{(v-s,s)}$. Choosing bases appropriately one obtains that the
scalar for the $\phi$ action is $\binom{v-t-s}{k-t}$ and the scalar
for the $\phi^t$ action is $\binom{k-s}{t-s}$, both with
multiplicity $\dim(V_{(v-s,s)})=\binom{v}{s}-\binom{v}{s-1}$.

\subsection{$t$-wise permutations}
We prove Theorem~\ref{thm:perms_first_version} in this subsection.
We recall the relevant definitions from the introduction. A family of permutations $T \subset S_n$ is called a \emph{$t$-wise permutation} if its action on any $t$-tuple of elements is uniform.
In other words, for any distinct elements $i_1,\ldots,i_t \in [n]$ and
distinct elements $j_1,\ldots,j_t \in [n]$,
\begin{equation*}
    \left| \{\pi \in T: \pi(i_1)=j_1,\ldots,\pi(i_t)=j_t\} \right| =
    \frac{1}{n(n-1)\cdots(n-t+1)} |T|.
\end{equation*}

Our general framework includes $t$-wise permutations as follows. We first set notations. Let $[n]_t:=\{(i_1,\ldots,i_t): i_1,\ldots,i_t \in [n] \textrm{ distinct}\}$ denote the family of $t$-tuples of distinct elements. For $I=(i_1,\ldots,i_t) \in [n]_t$ and $\pi \in S_n$ define $\pi(I):=(\pi(i_1),\ldots,\pi(i_t)) \in [n]_t$. We take
$B=S_n$ and $W$ to be the space spanned by all functions of the form
\begin{equation}\label{eq:f_I_J_def}
  f_{I, J}(\pi) = 1_{\pi(I)=J}.
\end{equation}
where $I,J \in [n]_t$ (we changed notation for the subspace from $V$
to $W$ in this section to avoid confusion with notations arising
later which are related to the representation theory of the
symmetric group). With this choice, a subset $T\subset B$ satisfying
\eqref{eq:prob_def} is precisely a $t$-wise permutation. We now
establish the conditions of Theorem~\ref{thm:main}. We will show
that:
\begin{enumerate}
\item The divisibility constant of $W$ is $c_1=\frac{n!}{(n-t)!}$.
\item $W$ has a $c_2$-bounded integer basis in $\ell_{\infty}$ with $c_2=1$.
\item $W^{\perp}$ has a $c_3$-bounded integer basis in $\ell_{1}$ with $c_3=(t+2)!$.
\item The group $S_n$ acts on $W$ transitively.
\item The space $W$ contains the constant functions.
\item The dimension of $W$ equals the number of permutations in
$S_n$ with longest increasing subsequence of length at least $n-t$.
It satisfies $\dim(W)\le |[n]_t|^2\le n^{2t}$.
\end{enumerate}
With these conditions, Theorem~\ref{thm:main} immediately implies
Theorem~\ref{thm:perms_first_version}.

A few conditions are easy to verify. First, $W$ contains the
constant functions since for each $I\in[n]_t$, $\sum_{J \in [n]_t}
f_{I,J}$ is the constant function $1$. Second, any spanning subset
of the functions $f_{I,J}$ is an integer basis for $W$ with
$\ell_{\infty}$ norm $c_2=1$. Third, observe that $S_n$ acts
naturally on $B=S_n$ by composition. Each $\sigma\in S_n$ is a
symmetry of $W$ since for any $I,J\in [n]_t$,
\begin{equation*}
  (f_{I,J}\circ\sigma)(\pi)=f_{I,J}(\sigma(\pi)) =
  1_{\sigma(\pi(I))=J} =
  1_{\pi(I)=\sigma^{-1}(J)}=f_{I,\sigma^{-1}(J)}(\pi)\in W.
\end{equation*}
The action of $S_n$ on $B$ is transitive, from which the symmetry
condition follows. Fourth, $\dim(W)\le |[n]_t|^2$ since the
$(f_{I,J})$ span $W$.

In order to find the divisibility constant of $W$ and establish that
$W^{\perp}$ is spanned by integer vectors with small $\ell_1$ norm,
we will need some basic facts regarding the irreducible
representations of the symmetric group, which we describe next. We
will follow the notation of \cite[Chapter 4]{FultonHarris}, but also
refer the reader to \cite{James} for the necessary background.

\subsubsection{Irreducible representations of the symmetric group}\label{sec:defs}

\paragraph{Partitions.}
A \emph{partition} $\lambda$ of $n$ is a vector $(\lambda_1, \ldots,
\lambda_\ell)$ of some length $\ell$ such that
$\lambda_1\ge\cdots\ge\lambda_\ell\ge 1$ and $\sum_{i=1}^\ell
\lambda_i = n$. If referring to $\lambda_i$ for $i>\ell$ we adapt
the convention that $\lambda_i=0$ for such $i$. The \emph{conjugate
partition} $\lambda'$ is defined as $(\lambda'_1, \ldots,
\lambda'_m)$ where $m=\lambda_1$ and $\lambda'_i:=|\{j\ :\
\lambda_j\ge i\}|$. The \emph{dominance partial order} $\unrhd$ on
partitions is defined by
\begin{equation*}
  \lambda \unrhd \mu\quad\text{if and only if}\quad \sum_{i=1}^j \lambda_i\ge
  \sum_{i=1}^j \mu_i\quad\text{for all $j\ge 1$.}
\end{equation*}
We let $\ge$ stand for the \emph{lexicographic total order} on
partitions. It is well-known that the lexicographic order extends
the dominance order, and that $\lambda\unrhd\mu$ if and only if
$\mu'\unrhd\lambda'$. We denote by $\PP_{n}$ the set of all partitions of $n$.

\paragraph{Young diagrams, tableaux and tabloids.}
Let $\lambda \in \PP_n$. Associated with it is the \emph{Young
diagram} of shape $\lambda$ (in English notation).  A \emph{tableau}
of shape $\lambda$ is a filling of the Young diagram of shape
$\lambda$ with the integers $1$ to $n$, with each integer occuring
once. We say that two tableaux are \emph{row-equivalent} if they are
the same up to the order of the numbers in each row. A
\emph{tabloid} of shape $\lambda$ is an equivalence class of
tableaux for the row-equivalence relation. We denote the tabloid
associated to the tableau $T$ by $[T]$. $S_n$ acts on tableaux with
the permutation action on the numbers in each tableau. This induces
a corresponding action on tabloids. The \emph{column stabilizer} of
a tableau $T$ of shape $\lambda$ is the subgroup $Q_T\le S_n$ of
permutations preserving the columns of $T$.

\paragraph{Irreducible representations, Young modules and Kostka
numbers.} The irreducible representations (over $\C$ or $\Q$) of the
symmetric group $S_n$ are in correspondence with shapes
$\lambda\in\PP_n$. We denote by $V_\lambda$ the irreducible
representation corresponding to $\lambda$. We let $U_\mu$, sometimes
called the Young module or permutation module of shape $\mu$, be the
module whose basis is all tabloids of shape $\mu$, equipped with the
action of $S_n$ on tabloids. Each Young module $U_{\mu}$ is
isomorphic to a sum of irreducible representations. The \emph{Kostka
number} $K_{\lambda,\mu}$ is the multiplicity of the irreducible
representation $V_\lambda$ in $U_\mu$. It is known that
$K_{\lambda,\mu}=0$ unless $\lambda\unrhd \mu$ and
$K_{\lambda,\lambda}=1$.

\paragraph{The group algebra and Fourier analysis.}
We denote by $\C S_n$ the group algebra of $S_n$, the set of
functions $f: S_n\to\C$ endowed with the product
$$
(f * g)(\pi) = \sum_{\sigma \in S_n} f(\sigma) g(\sigma^{-1} \pi).
$$
We fix once and for all a matrix representation for each irreducible
representation $V_\lambda$. Then the functions
$V_\lambda(\cdot)_{i,j}:S_n\to\C$ for $\lambda\in\PP_n$ and $1\le
i,j\le \dim(V_\lambda)$ are linearly independent and span $\C S_n$.
Moreover, extending $V_\lambda(\cdot)$ linearly to all of $\C S_n$,
$$
V_\lambda(f * g)= V_\lambda(f) V_\lambda(g)\quad\text{ for $f,g\in\C
S_n$ and $\lambda\in\PP_n$}.
$$
In addition, $f=0$ if and only if $V_\lambda(f)=0$ for all
$\lambda\in\PP_n$.

\subsubsection{Two bases for $W$ and the divisibility
constant}\label{sec:bases_for_W}

Define $W'$ to be the span of the functions
$\{V_{\lambda}(\cdot)_{i,j}: \lambda_1 \ge n-t, 1 \le i,j \le
  \dim(V_{\lambda})\}$. As all the $(V_\lambda(\cdot)_{i,j})$ are linearly independent, we have
\begin{equation}\label{eq:dim_W'}
  \dim(W')=\sum_{\lambda \in \PP_n: \lambda_1 \ge n-t}\dim(V_{\lambda})^2.
\end{equation}
  In this section we show that $W=W'$ and
  define a combinatorial basis which is useful in determining the
  divisibility constant of $W$.
\begin{claim}\label{cl:W_is_contained_in_W'}
$W\subseteq W'$.
\end{claim}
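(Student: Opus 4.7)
The plan is to identify $f_{I,J}$ as a two-sided translate of the indicator of the Young subgroup $S_{n-t}$, and then use the Fourier basis of $\C S_n$ together with Young's rule to show that only matrix coefficients of representations with $\lambda_1 \ge n-t$ can appear in $f_{I,J}$.

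Fix the reference tuple $I_0:=(1,\ldots,t)$, and for each $I\in[n]_t$ choose a permutation $\sigma_I\in S_n$ with $\sigma_I(I_0)=I$. The pointwise stabilizer of $I_0$ in $S_n$ is the Young subgroup $S_{n-t}$ acting on $\{t+1,\ldots,n\}$. A direct check gives $\pi(I)=J$ iff $\sigma_J^{-1}\pi\sigma_I\in S_{n-t}$, so
$$f_{I,J}(\pi)=\mathbf{1}_{S_{n-t}}(\sigma_J^{-1}\pi\sigma_I).$$
Writing this as a convolution in the group algebra, $f_{I,J}=\delta_{\sigma_J}*\mathbf{1}_{S_{n-t}}*\delta_{\sigma_I^{-1}}$, and applying $V_\lambda$ gives $V_\lambda(f_{I,J})=V_\lambda(\sigma_J)\cdot V_\lambda(\mathbf{1}_{S_{n-t}})\cdot V_\lambda(\sigma_I^{-1})$. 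Since the $V_\lambda(\cdot)_{i,j}$ form a basis of $\C S_n$, Schur orthogonality implies that the expansion of $f_{I,J}$ in this basis uses only those $\lambda$ for which $V_\lambda(f_{I,J})\ne 0$. Hence $f_{I,J}\in W'$ will follow once we show that $V_\lambda(\mathbf{1}_{S_{n-t}})=0$ whenever $\lambda_1<n-t$.

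To finish, observe that $V_\lambda(\mathbf{1}_{S_{n-t}})=\sum_{\sigma\in S_{n-t}}V_\lambda(\sigma)$ equals $(n-t)!$ times the projection onto the subspace $V_\lambda^{S_{n-t}}$ of $S_{n-t}$-fixed vectors in $V_\lambda$. By Frobenius reciprocity, $\dim V_\lambda^{S_{n-t}}$ equals the multiplicity of $V_\lambda$ in $\mathrm{Ind}_{S_{n-t}}^{S_n}\mathbf{1}=U_{(n-t,1^t)}$, which by Young's rule is the Kostka number $K_{\lambda,(n-t,1^t)}$. This number vanishes unless $\lambda\unrhd(n-t,1^t)$, which in particular forces $\lambda_1\ge n-t$. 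Therefore $V_\lambda(\mathbf{1}_{S_{n-t}})=0$ whenever $\lambda_1<n-t$, completing the argument. The whole proof amounts to assembling standard pieces of symmetric group representation theory, so I anticipate the only real obstacle to be carefully matching the paper's conventions for Young modules, Kostka numbers, and the dominance order.
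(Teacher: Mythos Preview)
Your proof is correct and uses essentially the same mathematics as the paper's, namely that the permutation module on ordered $t$-tuples is the Young module $U_{(n-t,1^t)}$ and hence contains only irreducibles $V_\lambda$ with $\lambda\unrhd(n-t,1^t)$. The paper phrases this by observing directly that the $f_{I,J}$ are matrix coefficients of the representation $R_t\cong U_{(n-t,1^t)}$, so they lie in the span of the matrix coefficients of its irreducible constituents; you instead write $f_{I,J}$ as a double translate of $\mathbf{1}_{S_{n-t}}$, compute $V_\lambda(f_{I,J})$ explicitly, and invoke Frobenius reciprocity to identify $\dim V_\lambda^{S_{n-t}}=K_{\lambda,(n-t,1^t)}$. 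Your route makes the final ``passing to the basis of irreducible representations'' step more explicit via Fourier inversion, at the cost of a slightly longer argument; both proofs are equally valid and rest on the same structural fact.
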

\begin{proof}
  It suffices to show that $f_{I,J}\in W'$ for all $I,J\in[n]_t$.
  Consider the representation $R_t$ of the action of $S_n$ on
  $t$-tuples, defined by
  \begin{equation*}
    R_t(\pi)_{I,J} = 1_{\pi(I)=J} = f_{I,J}(\pi) \quad \text{ for }\pi\in S_n\text{ and } I,J\in[n]_t.
  \end{equation*}
  It is simple to see that $R_t$ is isomorphic to the Young
  module $U_{\mu}$ for $\mu=(n-t,1,1,\ldots,1) \in \PP_n$. Indeed,
  this follows by identifying each $I \in [n]_t$ with the tabloid of
  shape $\mu$ having the elements of $I$, in order, as the elements
  in its first column at rows $2,\ldots, t+1$. By definition of the
  Kostka number $K_{\lambda,\mu}$, if the irreducible representation
  $V_\lambda$ appears in the decomposition of $U_\mu$ then $\lambda\unrhd
  \mu$, which occurs if and only if $\lambda_1\ge n-t$. In
  conclusion, the decomposition of $R_t$ into irreducible
  representations contains only $V_\lambda$ with $\lambda_1\ge n-t$.
  Passing to the basis of irreducible representations, we conclude
  that $f_{I,J}\in W'$ for each $I,J\in [n]_t$, as required.
\end{proof}
It is not difficult to use the same representation-theoretic methods
to show that $W=W'$, but we proceed by a different route in order to
identify also the divisibility constant of $W$.

For $\sigma\in S_n$, let $\LIS(\sigma)$ denote the length of the
longest increasing subsequence of $\sigma$. Define
\begin{equation}\label{eq:combinatorial_basis_A}
  A := \{\sigma\in S_n\,:\, \LIS(\sigma)\ge n-t\}.
\end{equation}
\begin{claim}\label{claim:dimW_A}
  $|A|=\dim(W')$.
\end{claim}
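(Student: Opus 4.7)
The plan is to invoke the Robinson--Schensted (RSK) correspondence, which provides a bijection between $S_n$ and pairs $(P,Q)$ of standard Young tableaux of the same shape $\lambda\in\PP_n$. Under this bijection, the shape $\lambda$ of the output tableaux of $\sigma$ satisfies the celebrated identity $\lambda_1=\LIS(\sigma)$, due to Schensted. Thus, restricting to permutations with $\LIS(\sigma)\ge n-t$ corresponds exactly to restricting to pairs $(P,Q)$ whose common shape $\lambda$ satisfies $\lambda_1\ge n-t$.

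Next I would use the classical fact that for each $\lambda\in\PP_n$, the number of standard Young tableaux of shape $\lambda$ equals $\dim(V_\lambda)$ (this is a standard consequence of the representation theory of $S_n$; see, e.g., \cite{FultonHarris} or \cite{James}). Combining RSK with this identity yields
\begin{equation*}
  |A| = \sum_{\lambda\in\PP_n\,:\,\lambda_1\ge n-t}
  (\text{\# SYT of shape }\lambda)^2
  = \sum_{\lambda\in\PP_n\,:\,\lambda_1\ge n-t}\dim(V_\lambda)^2.
\end{equation*}
By \eqref{eq:dim_W'}, the right-hand side is exactly $\dim(W')$, which proves the claim.

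The argument has no real obstacle; the work is entirely carried by the two classical ingredients (RSK together with Schensted's theorem on $\lambda_1$, and the enumeration of SYT by $\dim V_\lambda$). The only decision is whether to cite these or to reproduce the necessary background; given that the paper has already set up the representation-theoretic notation in Section~\ref{sec:defs}, a brief citation-style proof should suffice.
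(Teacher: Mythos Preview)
Your proof is correct and follows essentially the same approach as the paper: both invoke the Robinson--Schensted correspondence together with Schensted's theorem that $\lambda_1=\LIS(\sigma)$, and the identification of the number of standard Young tableaux of shape $\lambda$ with $\dim(V_\lambda)$, to obtain the equality after summing over $\lambda_1\ge n-t$.
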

\begin{proof}
The Robinson–-Schensted correspondence~\cite{Robinson38,Schensted61}
shows that
$$
\left|\{\sigma\in S_n\,:\, \LIS(\sigma) = r\}\right| = \sum_{\lambda
\in \PP_n: \lambda_1 = r} \dim(V_{\lambda})^2.
$$
Summing over $n-t \le r \le n$ and comparing with \eqref{eq:dim_W'}
concludes the proof.
\end{proof}
We now define a set of functions $(f_\sigma)$, $\sigma\in A$,
forming a basis of $W$. For each $\sigma\in A$, let
$S(\sigma)\subseteq [n]$ be the indices of an (arbitrary) increasing
subsequence in $\sigma$ of length $\LIS(\sigma)$. That is,
$S(\sigma)=(i_1,\ldots, i_{\LIS(\sigma)})$ for some indices
satisfying $i_{j+1}>i_j$ and $\sigma(i_{j+1})>\sigma(i_j)$. Define
\begin{equation}\label{eq:f_sigma_def}
  f_\sigma(\pi) = \begin{cases} 1& \pi(j) = \sigma(j)\quad \forall j\notin S(\sigma)\\ 0&\text{otherwise}\end{cases}.
\end{equation}
It is clear that the functions $f_\sigma$ are in $W$ since
$|S(\sigma)|\ge n-t$. Let $\succeq$ stand for the lexicographic
order on $S_n$. I.e., $\pi\succ\sigma$ if there exists a $j$ such
that $\pi(i)=\sigma(i)$ for all $i<j$ and $\pi(j)>\sigma(j)$.
\begin{lemma}\label{lemma:twise-basis-order}
  For each $\sigma\in A$, $f_\sigma(\sigma)=1$ and if
  $f_\sigma(\pi)=1$ then $\pi\succeq\sigma$.
\end{lemma}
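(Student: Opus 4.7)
The first assertion, $f_\sigma(\sigma)=1$, is immediate from the definition \eqref{eq:f_sigma_def} since $\sigma(j)=\sigma(j)$ for every $j\notin S(\sigma)$.

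For the second assertion, suppose $f_\sigma(\pi)=1$, so that $\pi(j)=\sigma(j)$ for all $j\notin S(\sigma)$. The plan is to locate the first index at which $\pi$ and $\sigma$ could disagree and argue that $\pi$ must be larger there. Concretely, assume $\pi\ne\sigma$ and let $j_0:=\min\{j:\pi(j)\ne\sigma(j)\}$. Since $\pi$ and $\sigma$ agree off $S(\sigma)$, we necessarily have $j_0\in S(\sigma)$; write $S(\sigma)=\{i_1<i_2<\cdots<i_k\}$ and $j_0=i_\ell$ for some $\ell\in[k]$.

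The next step is to observe that since $\pi$ and $\sigma$ are both permutations of $[n]$ agreeing on $[n]\setminus S(\sigma)$, their restrictions to $S(\sigma)$ are permutations of the same set of values $\{\sigma(i_1),\ldots,\sigma(i_k)\}$. Moreover, by minimality of $j_0=i_\ell$, we have $\pi(i_r)=\sigma(i_r)$ for $r<\ell$, so $\pi$ restricted to $\{i_\ell,i_{\ell+1},\ldots,i_k\}$ permutes precisely the values $\{\sigma(i_\ell),\sigma(i_{\ell+1}),\ldots,\sigma(i_k)\}$. By the defining property of $S(\sigma)$, the sequence $\sigma(i_\ell)<\sigma(i_{\ell+1})<\cdots<\sigma(i_k)$ is strictly increasing, so $\sigma(i_\ell)$ is the minimum of that set. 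Since $\pi(i_\ell)$ lies in this set and is not equal to $\sigma(i_\ell)$, we conclude $\pi(i_\ell)>\sigma(i_\ell)$, which by definition of $\succ$ on $S_n$ gives $\pi\succ\sigma$.

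Combining with the trivial case $\pi=\sigma$, we obtain $\pi\succeq\sigma$ as claimed. There is no real obstacle here beyond bookkeeping: the whole argument rests on the elementary fact that the values of $\sigma$ on an increasing subsequence are linearly ordered with smallest value at the first position, so any other arrangement of the same values must exceed $\sigma$ at the first point of disagreement.
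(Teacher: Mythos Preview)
Your proof is correct and follows essentially the same approach as the paper: both observe that $\pi$ agrees with $\sigma$ off $S(\sigma)$, hence permutes the same set of values on $S(\sigma)$, and since $\sigma$ arranges these values in increasing order it is lexicographically smallest among all such permutations. You have simply spelled out in more detail the step the paper leaves as ``it follows that $\pi$ appears after $\sigma$ in the lexicographic order.''
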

\begin{proof}
  $f_\sigma(\sigma)=1$ by the definition of $f_\sigma$. Suppose that
  $f_\sigma(\pi)=1$. Then $\pi(i)=\sigma(i)$ for every $i\notin
  S(\sigma)$ and $\{\pi(i)\,:\,i\in S(\sigma)\}=\{\sigma(i)\,:\,i\in S(\sigma)\}$.
  Since $S(\sigma)$ are the indices of an increasing subsequence
  in $\sigma$ it follows that $\pi$ appears after $\sigma$ in the
  lexicographic order.
\end{proof}
Now let $\phi:B\to\Z^A$ be the matrix whose columns are the
$f_\sigma$. It follows from Lemma~\ref{lemma:twise-basis-order} that
when the rows and columns are sorted by the lexicographic order on
permutations, then $\phi$ is in column-echelon form. Consequently,
the columns of $\phi$ are linearly independent and hence $\dim(W)\ge
|A|$. Combining this fact with Claims~\ref{cl:W_is_contained_in_W'}
and \ref{claim:dimW_A} shows that $W=W'$ and that the columns of
$\phi$ form a basis for $W$. In addition, the column-echelon form of
$\phi$ implies that $\LL(\phi) = \Z^A$. Now, if $\sigma\in A$ has
$\LIS(\sigma)=n-\ell$ for some $0\le\ell\le t$ then
\begin{equation*}
  \frac{1}{|B|}\sum_{\pi\in B}\phi(\pi)_\sigma = \frac{1}{n!}\left|\{\pi \in S_n:
\pi(j)=\sigma(j) \quad \forall j \notin S(\sigma)\}\right| =
\frac{(n-\ell)!}{n!}.
\end{equation*}
Hence $\frac{N}{|B|}\sum_{b\in B} \phi(b)\in\Z^A=\LL(\phi)$ if and
only if $N$ is a multiple of $\frac{n!}{(n-t)!}$, implying that
$c_1=\frac{n!}{(n-t)!}$ is the divisibility constant of $W$.

We end the section by giving an alternate characterization of $W$.
\begin{claim}\label{cl:W_characterization}
  A function $f\in\C S_n$ satisfies $f\in W$ if and only if
  $V_\lambda(f)=0$ for every $\lambda\in \PP_n$ with $\lambda_1\le
  n-t-1$.
\end{claim}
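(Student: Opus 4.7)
The plan is to leverage the equality $W=W'$, established earlier in Section~\ref{sec:bases_for_W}, which reduces the claim to the corresponding statement for $W'$: a function $f\in\C S_n$ lies in $W'$ if and only if $V_\mu(f)=0$ for every $\mu\in\PP_n$ with $\mu_1\le n-t-1$. Since the matrix coefficients $V_\lambda(\cdot)_{i,j}$ (over all $\lambda\in\PP_n$ and all $1\le i,j\le \dim V_\lambda$) form a basis of $\C S_n$, every $f\in\C S_n$ admits a unique expansion
\begin{equation*}
  f \;=\; \sum_{\lambda\in\PP_n}\sum_{1\le i,j\le \dim V_\lambda} c^\lambda_{ij}\, V_\lambda(\cdot)_{i,j},
\end{equation*}
and by definition $f\in W'$ is equivalent to $c^\lambda_{ij}=0$ for all $\lambda$ with $\lambda_1\le n-t-1$.

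For the forward direction, I would decompose $f=f'+f''$ with $f'\in W'$ and $f''$ in the complementary direct sum $W''$ spanned by the $V_\lambda(\cdot)_{i,j}$ with $\lambda_1\le n-t-1$. Applying $V_\mu$ and using Schur orthogonality (which tells us that $V_\mu(V_\lambda(\cdot)_{i,j})$ vanishes when $\mu\neq\lambda$), one sees that if $f\in W'$ then every nonzero term in $V_\mu(f)$ comes from $\lambda=\mu$ with $\mu_1\ge n-t$; hence $V_\mu(f)=0$ whenever $\mu_1\le n-t-1$.

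For the reverse direction, assume $V_\mu(f)=0$ for every $\mu$ with $\mu_1\le n-t-1$. Writing $f=f'+f''$ as above, the forward direction applied to $f'$ gives $V_\mu(f')=0$ for such $\mu$, so $V_\mu(f'')=0$ for these $\mu$ as well. For $\mu$ with $\mu_1\ge n-t$, Schur orthogonality already forces $V_\mu(f'')=0$ since $f''$ is a combination of $V_\lambda(\cdot)_{i,j}$ with $\lambda\neq\mu$. Thus $V_\mu(f'')=0$ for every $\mu\in\PP_n$, and the final bullet of the ``Group algebra and Fourier analysis'' discussion ($f=0$ iff $V_\lambda(f)=0$ for all $\lambda$) then yields $f''=0$, i.e.\ $f=f'\in W'=W$.

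There is no serious obstacle here; the only point needing care is the Schur-orthogonality computation showing that $V_\mu$ annihilates the matrix-coefficient subspaces indexed by $\lambda\neq\mu$, which is exactly the content of the isomorphism $\C S_n\cong \bigoplus_\lambda \mathrm{End}(V_\lambda)$ given by $f\mapsto (V_\lambda(f))_\lambda$. Given the earlier identification $W=W'$, the claim is essentially a translation of this Wedderburn-type decomposition into the language of vanishing constraints.
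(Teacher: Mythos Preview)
Your proposal is correct and follows essentially the same route as the paper: both arguments rest on the equality $W=W'$ and the matrix-coefficient decomposition of $\C S_n$, reducing the claim to the observation that the Fourier transform $f\mapsto (V_\lambda(f))_\lambda$ identifies the span of $\{V_\lambda(\cdot)_{i,j}\}_{i,j}$ with the $\lambda$-block of $\bigoplus_\mu\mathrm{End}(V_\mu)$. The only difference is in how this last identification is justified. You invoke the Wedderburn isomorphism directly, which is the clean way and is exactly what is needed, since the orthogonality relations as stated in \eqref{eq:orthogonality_relations} involve $\pi^{-1}$ and do not literally give $V_\mu(V_\lambda(\cdot)_{i,j})=0$ for $\mu\neq\lambda$ without further argument. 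The paper instead works around this by introducing the involution $\tilde f(\pi)=f(\pi^{-1})$, checking that $f\in W\Leftrightarrow\tilde f\in W$, expanding $\tilde f$ in matrix coefficients, and then computing $V_\lambda(f)_{k,\ell}=\frac{n!}{\dim V_\lambda}\alpha_{\lambda,k,\ell}$ directly from \eqref{eq:orthogonality_relations}. Your approach is slightly more structural; the paper's is slightly more explicit. Both are short and yield the same conclusion.
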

\begin{proof}
We first recall the orthogonality relations for irreducible
representations which state that
\begin{equation}\label{eq:orthogonality_relations}
  \sum_{\pi\in S_n} V_\mu(\pi^{-1})_{i,j} V_\lambda(\pi)_{k,\ell} =
  \frac{n!}{\dim(V_\lambda)}\delta_{\lambda, \mu}\delta_{i,k}\delta_{j,\ell} \quad\text{ $\lambda,\mu\in\PP_n$, $1\le i,j\le \dim(V_\lambda)$, $1\le
  k,\ell\le \dim(V_\mu)$}.
\end{equation}
  We continue by observing that $f\in W$ if and only if $\tilde{f}\in
  W$ where $\tilde{f}$ is defined by $\tilde{f}(\pi)=f(\pi^{-1})$. Indeed, $f\in W$ if and only if $f = \sum_{I,J\in [n]_t}
  \alpha_{I,J} f_{I,J}$ for the functions $f_{I,J}$ defined by \eqref{eq:f_I_J_def} and some coefficients
  $\alpha_{I,J}$. The observation now follows since
  $f_{I,J}(\pi^{-1}) = f_{J,I}(\pi)$.

  Now let $f\in\C S_n$. Decompose $\tilde{f}$ in the basis of irreducible
representations of $S_n$ as
\begin{equation*}
\tilde{f}(\pi) = \sum_{\mu \in \PP_n} \sum_{i,j=1}^{\dim(V_{\mu})}
\alpha_{\mu,i,j} \cdot V_{\mu}(\pi)_{i,j},
\end{equation*}
where $\alpha_{\mu,i,j} \in \C$. Then,
\begin{equation*}
  V_\lambda(f) = \sum_{\pi\in S_n}\sum_{\mu \in \PP_n} \sum_{i,j=1}^{\dim(V_{\mu})}
\alpha_{\mu,i,j} \cdot V_{\mu}(\pi^{-1})_{i,j}V_\lambda(\pi).
\end{equation*}
Thus, \eqref{eq:orthogonality_relations} implies that
\begin{equation*}
V_\lambda(f)_{k,\ell}=
\frac{\alpha_{\lambda,k,\ell}n!}{\dim(V_\lambda)}.
\end{equation*}
We conclude that $V_\lambda(f)=0$ for all $\lambda$ with
$\lambda_1\le n-t-1$ if and only if $\alpha_{\mu,i,j}=0$ for all
$\mu$ with $\mu_1\le n-t-1$. That is, if and only if $\tilde{f}\in
W'$. Since $W=W'$ and $\tilde{f}\in W$ if and only if $f\in W$ we
conclude that $V_\lambda(\tilde{f})=0$ for all $\lambda$ with
$\lambda_1\le n-t-1$ if and only if $f\in W$.
\end{proof}

\subsubsection{Antisymmetrizers}
We introduce the \emph{column antisymmetrizer} of a tableau $T$,
\begin{equation}\label{eq:b_t_def}
  b_T := \sum_{\sigma\in Q_T} \sign(\sigma)\cdot \sigma,
\end{equation}
which is an element of the group algebra $\C S_n$. We study in which
irreducible representations the antisymmetrizers have a non-trivial
action. We start by studying this question for the Young modules
since these have a simpler combinatorial nature.

\begin{claim}\label{claim:action_U_bT}
Let $T$ be a tableau of shape $\lambda \in \PP_n$. Then
\begin{enumerate}
\item[(i)] $U_{\lambda}(b_{T}) \ne 0$.
\item[(ii)] If $U_{\mu}(b_T) \ne 0$ then $\lambda \unrhd \mu$.
\end{enumerate}
\end{claim}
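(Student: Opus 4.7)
The plan is to prove both parts by computing the action of $b_T$ explicitly on well-chosen tabloids, using only the combinatorial definitions (rather than any black-box representation theory beyond the setup already in the excerpt).

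For (i), I would apply $b_T$ to the tabloid $[T]$ itself, obtaining
\[
b_T \cdot [T] = \sum_{\sigma \in Q_T} \sign(\sigma)\,[\sigma T].
\]
The key observation is that the tabloids $[\sigma T]$, as $\sigma$ ranges over $Q_T$, are pairwise distinct. This is because each column of $T$ has at most one entry in any given row, so two distinct column permutations $\sigma \neq \sigma'$ in $Q_T$ must move some element between two different rows, producing row-inequivalent tableaux. Consequently $b_T \cdot [T]$ is a $\pm 1$ combination of $|Q_T|$ distinct basis vectors of $U_\lambda$, hence nonzero, which shows $U_\lambda(b_T)\neq 0$.

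For (ii), I would show the contrapositive via a sign-cancellation lemma: if $[S]$ is a $\mu$-tabloid such that some two entries lying in the same column of $T$ also lie in the same row of $S$, then $b_T \cdot [S] = 0$. Indeed, the transposition $\tau$ swapping those two entries belongs to $Q_T$, has sign $-1$, and fixes $[S]$, so it pairs up $\sigma$ and $\sigma\tau$ in the sum defining $b_T \cdot [S]$ with cancelling contributions. Therefore, if $U_\mu(b_T)\neq 0$, there exists a $\mu$-tabloid $[S]$ with $b_T[S]\neq 0$, and for this $[S]$ no column of $T$ contains two entries lying in the same row of $S$.

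Granted this combinatorial condition on $([T],[S])$, the dominance $\lambda \unrhd \mu$ follows from a double-counting argument. For each $i\ge 1$, the entries occupying the first $i$ rows of $S$ number $\mu_1+\cdots+\mu_i$. For each column $c$ of $T$ (of length $\lambda'_c$), the non-repetition hypothesis forces the entries of column $c$ to go to $\lambda'_c$ distinct rows of $S$, so at most $\min(i,\lambda'_c)$ of them land in the first $i$ rows of $S$. Summing over columns gives
\[
\mu_1+\cdots+\mu_i \;\le\; \sum_{c} \min(i,\lambda'_c) \;=\; \lambda_1+\cdots+\lambda_i,
\]
where the last equality is just the count of boxes in the first $i$ rows of the diagram of $\lambda$. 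This is the defining inequality of $\lambda \unrhd \mu$, completing (ii). The main obstacle here is the dominance inequality, which is classical (it is the Young dominance lemma underlying Specht module theory), but the excerpt does not quote it, so I would include the short double-counting derivation above rather than invoking it as a black box.
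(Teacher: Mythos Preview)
Your proof is correct and follows essentially the same approach as the paper: for (i) you use that column permutations of $T$ yield distinct tabloids (the paper phrases this as the diagonal matrix entry $U_\lambda(b_T)_{[T],[T]}$ equaling $1$), and for (ii) you use the same sign-cancellation via a transposition in $Q_T$ fixing $[S]$. The only cosmetic difference is in concluding dominance: the paper observes that the ``columns of $T$ in distinct rows of $S$'' condition gives $\mu' \unrhd \lambda'$ and then invokes the stated duality $\mu'\unrhd\lambda' \Leftrightarrow \lambda\unrhd\mu$, whereas you supply the direct double-counting $\mu_1+\cdots+\mu_i \le \sum_c \min(i,\lambda'_c) = \lambda_1+\cdots+\lambda_i$.
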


\begin{proof}
We first recall the basic properties. For tabloids $[T'],[T'']$ of shape $\mu$ and $\pi \in S_n$ we have
$$
U_{\mu}(\pi)_{[T'],[T'']} = 1_{\pi([T'])=[T'']}.
$$
We first establish $(i)$. We have
$$
U_{\lambda}(b_T)_{[T],[T]} = \sum_{\pi \in Q_T} \sign(\pi) 1_{\pi([T])=[T]} = 1,
$$
since any permutation in $Q_T$ except the identity maps $[T]$ to a
different tabloid. In particular $U_{\lambda}(b_T) \ne 0$.

We next establish $(ii)$. To do so, we show that for every tabloid
$[S]$ of shape $\mu$ we have $b_T [S]=0$. Assume for a moment that
there exist two elements $a,b$ in the same row of $S$ and the same
column of $T$. Define $s_{a,b} = \frac{1}{2}(\rm{Id} - (a,b)) \in \C
S_n$ where $\rm{Id}$ is the identity permutation and $(a,b)$ is the
permutation that swaps $a$ and $b$. On the one hand $b_T * s_{a,b} =
b_T$, and on the other hand $s_{a,b} [S]=0$. Hence
$$
b_T [S] = (b_T * s_{a,b}) [S] = b_T (s_{a,b} [S]) = 0.
$$
So, if $U_{\mu}(b_T) \ne 0$ such a pair cannot exist. Thus, all the elements in the first column of $T$ appear in different rows in $S$; all the elements in the second column of $T$ appear in different rows of $S$, etc. This implies that $\mu' \unrhd \lambda'$ where $\mu',\lambda'$ are the conjugate partitions to $\mu,\lambda$. This in turn implies that $\lambda \unrhd \mu$ as we claimed.
\end{proof}

We now derive the analogous claim for the irreducible
representations.

\begin{claim}\label{claim:action_V_bT}
Let $T$ be a tableau of shape $\lambda \in \PP_n$. Then
\begin{enumerate}
\item[(i)] $V_{\lambda}(b_{T}) \ne 0$.
\item[(ii)] If $V_{\mu}(b_T) \ne 0$ then $\lambda \unrhd \mu$.
\end{enumerate}
\end{claim}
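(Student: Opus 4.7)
The plan is to lift the previous claim about Young modules to irreducible representations by exploiting the decomposition $U_\mu = \bigoplus_{\nu \in \PP_n} K_{\nu,\mu} V_\nu$, together with the two facts recalled just before the claim: $K_{\nu,\mu} = 0$ unless $\nu \unrhd \mu$, and $K_{\mu,\mu} = 1$. Since $b_T \in \C S_n$ and each representation is a homomorphism of algebras, the action of $b_T$ on $U_\mu$ is block-diagonal in the irreducible decomposition, and we can read off information about $V_\nu(b_T)$ from $U_\mu(b_T)$ (and vice versa). I would prove part (ii) first, then deduce (i) as a consequence.

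For (ii), suppose $V_\mu(b_T) \neq 0$. Because $K_{\mu,\mu} = 1$, the irreducible $V_\mu$ occurs (with multiplicity one) as a summand of $U_\mu$. Hence the block corresponding to this copy of $V_\mu$ inside $U_\mu(b_T)$ is exactly $V_\mu(b_T)$, which is non-zero. In particular, $U_\mu(b_T) \neq 0$, and the previous claim (part (ii) for Young modules) then gives $\lambda \unrhd \mu$, as desired.

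For (i), invoke part (i) of the previous claim to conclude $U_\lambda(b_T) \neq 0$. Decompose $U_\lambda = \bigoplus_{\nu} K_{\nu,\lambda} V_\nu$, so that $U_\lambda(b_T)$ is the block sum of the $V_\nu(b_T)$ for $\nu$ with $K_{\nu,\lambda} > 0$. Since this block sum is non-zero, there exists some such $\nu$ with $V_\nu(b_T) \neq 0$. For this $\nu$, the constraint $K_{\nu,\lambda} > 0$ forces $\nu \unrhd \lambda$, while part (ii) (already proved) applied to $\nu$ forces $\lambda \unrhd \nu$. The dominance order is antisymmetric, so $\nu = \lambda$, and therefore $V_\lambda(b_T) \neq 0$.

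There is no real obstacle beyond bookkeeping; the only thing to be careful about is the order of proof, since proving (i) directly would require arguing that the $V_\nu(b_T)$ for $\nu \rhd \lambda$ cannot conspire to hide a zero $V_\lambda(b_T)$ block, and that is exactly what (ii) rules out. Establishing (ii) first sidesteps this issue cleanly.
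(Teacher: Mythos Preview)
Your proposal is correct and follows essentially the same approach as the paper: prove (ii) first via the inclusion $V_\mu \hookrightarrow U_\mu$ (from $K_{\mu,\mu}=1$) combined with the Young-module claim, then deduce (i) from $U_\lambda(b_T)\neq 0$ by using (ii) to rule out the blocks $V_\nu(b_T)$ with $\nu \rhd \lambda$. The only cosmetic difference is that the paper phrases (ii) as a contrapositive and, for (i), directly kills the higher blocks rather than arguing by antisymmetry of dominance, but the logic is identical.
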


\begin{proof}
Let $\mu\in\PP_n$ be such that $\lambda\!\! \not\!\unrhd\, \mu$.
$K_{\mu, \mu}=1$ and hence $V_\mu$ appears in the decomposition of
$U_\mu$ into irreducible representations. Thus, the fact that
$U_\mu(b_T)=0$ by Claim~\ref{claim:action_U_bT} implies that
$V_\mu(b_T)=0$, proving $(ii)$.

Now, since $K_{\lambda, \lambda}=1$ and $K_{\tau,\lambda}=0$ unless
$\tau\unrhd \lambda$ we have that $U_\lambda$ decomposes as the sum
of $V_\lambda$ plus other irreducible representations $V_\tau$ with
$\tau \rhd \lambda$. Since $U_\lambda(b_T)\neq 0$ by
Claim~\ref{claim:action_U_bT} and $V_\tau(b_T)=0$ when $\tau \rhd
\lambda$ by part $(ii)$ we deduce that $V_\lambda(b_T)\neq 0$,
proving $(i)$.
\end{proof}

\subsubsection{Spanning vectors for $W^{\perp}$}
We will prove the following lemma in this subsection.
\begin{lemma}\label{lemma:span_W_perp}
Let $f \in \C S_n$. Then $f \in W$ iff $b_T * f=0$ for all tableaux $T$ of shape $\lambda$ with $\lambda_1=n-t-1$.
\end{lemma}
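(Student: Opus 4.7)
The plan is to argue via the Fourier characterization of $W$ given by Claim~\ref{cl:W_characterization}, namely $f\in W$ iff $V_\mu(f)=0$ for every $\mu\in\PP_n$ with $\mu_1\le n-t-1$. Combining this with the homomorphism property $V_\mu(b_T*f)=V_\mu(b_T)V_\mu(f)$ and the fact that an element of $\C S_n$ is zero iff all its Fourier blocks $V_\mu(\cdot)$ vanish, the statement $b_T*f=0$ becomes $V_\mu(b_T)V_\mu(f)=0$ for every $\mu$.

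For the forward direction I fix $f\in W$ and a tableau $T$ of shape $\lambda$ with $\lambda_1=n-t-1$, and split on $\mu$. If $\mu_1\le n-t-1$ then $V_\mu(f)=0$ by the Fourier characterization. If instead $\mu_1\ge n-t$, then $\mu_1>\lambda_1$ already violates the first prefix-sum inequality defining $\lambda\unrhd\mu$, so $\lambda\not\unrhd\mu$ and Claim~\ref{claim:action_V_bT}(ii) forces $V_\mu(b_T)=0$. Either way $V_\mu(b_T)V_\mu(f)=0$, which gives $b_T*f=0$.

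For the converse I fix $\mu$ with $\mu_1\le n-t-1$ and first exhibit a partition $\lambda$ of $n$ with $\lambda_1=n-t-1$ and $\lambda\unrhd\mu$: take $\lambda=\mu$ if $\mu_1=n-t-1$; otherwise, starting from $\mu$, transfer $n-t-1-\mu_1$ boxes from the bottommost removable corners of the Young diagram up to row~$1$, which keeps the diagram a partition and weakly raises every prefix sum. I then define $K_\mu:=\bigcap_T\ker V_\mu(b_T)\subseteq V_\mu$, the intersection being taken over all tableaux $T$ of shape $\lambda$. The identity $b_{\pi T}=\pi b_T\pi^{-1}$ together with the transitive $S_n$-action on tableaux of a fixed shape shows that $K_\mu$ is $S_n$-invariant, so irreducibility of $V_\mu$ forces $K_\mu\in\{0,V_\mu\}$. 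To rule out $K_\mu=V_\mu$ I will establish the converse of Claim~\ref{claim:action_V_bT}(ii) in the case $\lambda\unrhd\mu$: since $b_T^2=|Q_T|b_T$ the matrix $V_\mu(b_T)/|Q_T|$ is an idempotent, so $V_\mu(b_T)\ne 0$ iff $\chi_\mu(b_T)\ne 0$; Frobenius reciprocity identifies $\chi_\mu(b_T)/|Q_T|$ with the multiplicity of $V_\mu$ in $\mathrm{Ind}_{Q_T}^{S_n}\sign$; and the projection formula together with Young's rule give $\mathrm{Ind}_{Q_T}^{S_n}\sign=\bigoplus_\nu K_{\nu',\lambda'}V_\nu$, whose $V_\mu$-multiplicity $K_{\mu',\lambda'}$ is positive precisely when $\mu'\unrhd\lambda'$, equivalently $\lambda\unrhd\mu$. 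Hence $K_\mu=0$, and then $V_\mu(b_T)V_\mu(f)=0$ for every $T$ of shape $\lambda$ forces every column of $V_\mu(f)$ into $K_\mu$, giving $V_\mu(f)=0$ as required.

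The main obstacle is the converse to Claim~\ref{claim:action_V_bT}(ii), which the paper has not explicitly stated: one needs that $\lambda\unrhd\mu\Rightarrow V_\mu(b_T)\ne 0$ for $T$ of shape $\lambda$, and this amounts to the standard decomposition of the sign representation induced from a Young subgroup. Once this input is granted, the remainder is a short Schur-style analysis of the $S_n$-invariant kernel $K_\mu$ together with the combinatorial verification that every $\mu$ with $\mu_1\le n-t-1$ is dominated by some $\lambda$ with $\lambda_1=n-t-1$.
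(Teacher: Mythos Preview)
Your proof is correct. The forward direction is identical to the paper's. For the converse you take a genuinely different route.

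The paper argues the converse contrapositively: if $f\notin W$ pick $\mu$ with $\mu_1\le n-t-1$ and $V_\mu(f)\ne 0$; it then uses the central element $c_\mu=\sum_S b_S$ (Claim~\ref{cl:multiple_of_identity}) to locate a tableau $S$ of shape $\mu$ with $V_\mu(b_S*f)\ne 0$, and finally the combinatorial factorization $b_S=g*b_T$ with $T$ of some shape $\lambda$ having $\lambda_1=n-t-1$ (Claim~\ref{cl:minimal_derivative}, obtained by moving maximal boxes to the first row so that $Q_T\le Q_S$). Thus the paper never needs $V_\mu(b_T)\ne 0$ for shapes $\lambda\ne\mu$; it only uses Claim~\ref{claim:action_V_bT}(i) at $\lambda=\mu$.

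You instead prove and use the full converse of Claim~\ref{claim:action_V_bT}(ii), namely $\lambda\unrhd\mu\Rightarrow V_\mu(b_T)\ne 0$, via Frobenius reciprocity and the decomposition $\mathrm{Ind}_{Q_T}^{S_n}\sign\cong\sign\otimes M^{\lambda'}\cong\bigoplus_\nu K_{\nu',\lambda'}V_\nu$, together with the Kostka positivity criterion $K_{\mu',\lambda'}>0\iff\mu'\unrhd\lambda'$. This is a stronger input than anything the paper records (the paper only states $K_{\lambda,\mu}=0$ unless $\lambda\unrhd\mu$, not the converse), but it is of course standard. Your Schur-type argument on the $S_n$-invariant common kernel $K_\mu$ then replaces the paper's use of the centrality of $c_\mu$. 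What you gain is a cleaner, more conceptual proof once the representation-theoretic input is granted; what the paper gains is self-containment, since its auxiliary Claims~\ref{cl:minimal_derivative} and~\ref{cl:multiple_of_identity} are proved from scratch with nothing beyond Claim~\ref{claim:action_V_bT}(i) and elementary coset/Schur considerations.
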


We first show that this gives a basis of integer vectors for
$W^{\perp}$ of small $\ell_1$ norm.

\begin{cor}\label{cor:span_W_perp_bound}
$W^{\perp}$ has a $(t+2)!$-bounded integer basis in $\ell_1$.
\end{cor}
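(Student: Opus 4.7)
The plan is to extract from Lemma~\ref{lemma:span_W_perp} an explicit integer spanning set of $W^{\perp}$ whose $\ell_1$ norms are controlled by $|Q_T|$. For each tableau $T$ of shape $\lambda\in\PP_n$ with $\lambda_1 = n-t-1$ and each $\pi \in S_n$, I would translate the single scalar equation $(b_T * f)(\pi) = 0$ into an orthogonality relation against one concrete vector $v_{T,\pi} \in \Z^{S_n}$.

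Expanding the convolution and substituting $\rho = \sigma^{-1}\pi$ gives
\[
(b_T * f)(\pi) = \sum_{\sigma \in Q_T} \sign(\sigma)\, f(\sigma^{-1}\pi) = \sum_{\rho \in S_n} v_{T,\pi}(\rho)\, f(\rho),
\]
where $v_{T,\pi}(\rho) := \sign(\pi\rho^{-1})\cdot \one{\pi\rho^{-1} \in Q_T}$ is $\{-1,0,1\}$-valued and supported on a coset of $Q_T$. By Lemma~\ref{lemma:span_W_perp}, a function $f \in \C S_n$ lies in $W$ iff $\ip{v_{T,\pi}, f} = 0$ for all admissible pairs $(T,\pi)$. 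Hence $W = \{v_{T,\pi}\}^{\perp}$, so $W^{\perp} = \Span\{v_{T,\pi}\}$, and an integer basis is obtained by picking a maximal linearly independent subset.

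It then remains to bound $\|v_{T,\pi}\|_1 = |Q_T| = \prod_j (\lambda'_j)!$ by $(t+2)!$. Since $\lambda_1 = n-t-1$, the parts $\lambda_2,\lambda_3,\ldots$ sum to $t+1$, so $\lambda$ has at most $t+2$ parts and in particular $\lambda'_1 \le t+2$. A short merging argument based on the elementary inequality $a!\,b! \le (a+b-1)!$ for $a,b \ge 2$ shows that, subject to the constraints $\sum_j \lambda'_j = n$ and $\sum_j(\lambda'_j - 1) = t+1$, the product $\prod_j (\lambda'_j)!$ is maximized by concentrating all mass in columns of length $\ge 2$ into a single column; this extremal case is $\lambda = (n-t-1, 1, \ldots, 1)$ and yields exactly $|Q_T| = (t+2)!$.

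I do not anticipate serious difficulty in carrying this out: the corollary is essentially a bookkeeping consequence of Lemma~\ref{lemma:span_W_perp} once the spanning vectors $v_{T,\pi}$ are written down explicitly, and the only genuine combinatorial input is the uniform bound $|Q_T| \le (t+2)!$ for tableaux whose first row has length $n-t-1$.
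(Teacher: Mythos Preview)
Your proposal is correct and essentially the same as the paper's proof: both unwind $b_T*f=0$ into the family of scalar equations $\sum_{\sigma\in Q_T}\sign(\sigma)f(\sigma^{-1}\pi)=0$, read off the corresponding $\{-1,0,1\}$-valued vectors of $\ell_1$-norm $|Q_T|$, and bound $|Q_T|=\prod_j(\lambda'_j)!$ by $(t+2)!$ using the inequality $a!\,b!\le(a+b-1)!$ (the paper writes it as $(a+1)!(b+1)!\le(a+b+1)!$) together with $\sum_j(\lambda'_j-1)=n-\lambda_1=t+1$.
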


\begin{proof}
Let $T$ be a tableau of shape $\lambda \in \PP_n$ with $\lambda_1=n-t-1$. The condition $b_T * f=0$ is equivalent to
$$
\sum_{\sigma \in Q_T} \sign(\sigma) f(\sigma^{-1} \pi)=0\quad\text{
for all $\pi \in S_n$}.
$$
The $\ell_1$ norm of the vectors in $W^{\perp}$ these define is
$|Q_T|$, which we next derive a bound on. Let
$\lambda'=(\lambda'_1,\ldots,\lambda'_m)$ be the conjugate partition
to $\lambda$. Then $|Q_T| = \prod_{i=1}^m \lambda'_i!$. Observe that
for any $a,b\ge 0$, $b+1=\binom{b+1}{b}\le \binom{a+b+1}{b}$ and
hence $(a+1)!(b+1)!\le (a+b+1)!$. Thus
\begin{equation*}
  |Q_T| = \prod_{i=1}^m \lambda'_i! = \prod_{i=1}^{m} (\lambda'_i-1+1)! \le \left(1+\sum_{i=1}^{m} (\lambda'_i-1)\right)! =
  (1+n-\lambda_1)!=(t+2)!.\qedhere
\end{equation*}
\end{proof}
For the proof of Lemma~\ref{lemma:span_W_perp} we need the following
auxiliary claims.
\begin{claim}\label{cl:minimal_derivative}
  Let $\mu\in \PP_n$ satisfy $\mu_1\le n-t-1$. Then for every tableau $S$
  of shape $\mu$ there exists a shape $\lambda\in \PP_n$ with
  $\lambda_1=n-t-1$, a tableau $T$ of shape $\lambda$ and an element
  $g\in \C S_n$ such that $b_S = g*b_T$.
\end{claim}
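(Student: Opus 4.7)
The plan is to choose the tableau $T$ so that its column stabilizer $Q_T$ sits inside the column stabilizer $Q_S$ of the given tableau $S$ as a subgroup, and then to let $g$ be the signed sum over a set of left coset representatives for $Q_S/Q_T$. The identity $b_S = g \ast b_T$ will then be immediate from the multiplicativity of $\sign$ combined with the fact that the cosets partition $Q_S$.

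The first step is to produce the shape $\lambda$. Let $(\mu'_1,\ldots,\mu'_{\mu_1})$ be the multiset of column lengths of $\mu$; these are positive integers summing to $n$. I will greedily refine this multiset: while the number of parts is strictly less than $n-t-1$, choose the largest current part --- which must be $\ge 2$, because otherwise all parts equal $1$ and the number of parts is $n \ge n-t-1$, contradicting the assumption --- and replace it by that part minus $1$ together with a new part equal to $1$. After $n-t-1-\mu_1$ such replacements the multiset has exactly $n-t-1$ positive parts summing to $n$; sorted in non-increasing order, it is the conjugate $\lambda'$ of a partition $\lambda\in\PP_n$ with $\lambda_1=n-t-1$. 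By construction, the column lengths of $\lambda$ refine those of $\mu$.

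Next I build the tableau $T$. Let $C_1,\ldots,C_{\mu_1}\subseteq[n]$ denote the sets of entries in the successive columns of $S$, so $|C_i|=\mu'_i$. The refinement above specifies, for each $i$, a composition of $\mu'_i$ into the sub-parts that originated from it; I split $C_i$ arbitrarily into subsets of exactly those sizes. Collecting all resulting subsets across all $i$ and ordering them into a family $D_1,\ldots,D_{n-t-1}$ with $|D_j|=\lambda'_j$, I let $T$ be any tableau of shape $\lambda$ whose $j$-th column, viewed as a subset of $[n]$, is $D_j$ (the vertical order within each column is irrelevant since $Q_T$ depends only on the column sets).

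The remaining algebraic identity is the main --- and essentially only --- thing to verify, and it is routine. By construction, each $D_j$ is contained in some $C_{i(j)}$, so
\[
Q_T = \prod_{j=1}^{n-t-1}\Sym(D_j)\;\le\;\prod_{i=1}^{\mu_1}\Sym(C_i) = Q_S.
\]
Fix a set $R\subset Q_S$ of representatives for the left cosets $Q_S/Q_T$ and put
\[
g := \sum_{\rho\in R}\sign(\rho)\,\rho \in \C S_n.
\]
Every $\sigma\in Q_S$ admits a unique decomposition $\sigma=\rho\tau$ with $\rho\in R$ and $\tau\in Q_T$, and $\sign(\sigma)=\sign(\rho)\sign(\tau)$ because $\sign$ is a group homomorphism. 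Consequently,
\[
g\ast b_T \;=\; \sum_{\rho\in R}\sum_{\tau\in Q_T}\sign(\rho)\sign(\tau)\,\rho\tau \;=\; \sum_{\sigma\in Q_S}\sign(\sigma)\,\sigma \;=\; b_S,
\]
as required. The only subtlety to watch for is that the greedy refinement indeed terminates at a valid partition shape, but this follows from the elementary observation that whenever fewer than $n-t-1$ parts have been produced, at least one part must exceed $1$.
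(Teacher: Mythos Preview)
Your proof is correct and follows essentially the same approach as the paper: construct $T$ so that its column sets refine those of $S$ (giving $Q_T\le Q_S$), then write $b_S=g\ast b_T$ with $g$ the signed sum over left coset representatives of $Q_T$ in $Q_S$. The only cosmetic difference is that the paper phrases the refinement by iteratively moving a maximal element of the tableau (one that is last in its row and column) to the end of the first row, whereas you describe the same process abstractly at the level of column-length multisets before filling in the entries.
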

\begin{proof}
Define an element in a tableau as {\em maximal} if it is the last
element in its row and its column (in English notation, it is the
rightmost element in its row and the bottom element in its column).
We construct $T$ from $S$ by iteratively moving maximal elements
which are not in the first row to the end of the first row, until
the first row of $T$ contains exactly $n-t-1$ elements. It is simple
to verify that this process guarantees that $Q_T$ is a subgroup of
$Q_S$, since each column of $T$ is contained in a column of $S$. Let
$\{\sigma_1,\ldots,\sigma_r\}$ be representatives for the left
cosets of $Q_T$ in $Q_S$. Then we have
\begin{equation*}
b_S = \left(\sum_{i=1}^r \sign(\sigma_i) \cdot \sigma_i\right) *
b_T.\qedhere
\end{equation*}
\end{proof}

Observe that if $T,S$ are tableaux of shape $\lambda$ and $\sigma\in
S_n$ satisfies $\sigma(T) = S$, then $b_S = \sigma b_T\sigma^{-1}$.
Define, for $\lambda\in\PP_n$,
\begin{equation}\label{eq:c_lambda_def}
  c_\lambda := \sum_{S\text{ of shape $\lambda$}} b_S =
  \sum_{\sigma\in S_n} \sigma b_T\sigma^{-1}
\end{equation}
where $T$ is an arbitrary tableau of shape $\lambda$. Denoting by
$\tr$ the trace of a matrix, the definition implies that
\begin{equation}\label{eq:c_lambda_trace}
  \tr(V_\mu(c_\lambda))=n!\tr(V_\mu(b_T))\quad\text{ for
  $\mu\in\PP_n$}.
\end{equation}
\begin{claim}\label{cl:multiple_of_identity}
  $V_\mu(c_\lambda)$ is a multiple of the identity for all
  $\lambda, \mu\in\PP_n$. Moreover, if there exists a tableau $T$ of
  shape $\lambda$ such that $V_\mu(b_T)\neq 0$ then
  $V_\mu(c_\lambda)\neq 0$.
\end{claim}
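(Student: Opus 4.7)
The plan is to prove the two parts of the claim separately, using the centrality of $c_\lambda$ in the group algebra together with the idempotency of $b_T$.

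For the first assertion, I would begin by observing that the definition \eqref{eq:c_lambda_def}, namely $c_\lambda = \sum_{\sigma \in S_n} \sigma\, b_T\, \sigma^{-1}$, makes it manifest that $c_\lambda$ is invariant under conjugation by any element of $S_n$, and hence lies in the center of the group algebra $\C S_n$. Consequently, for each $\pi \in S_n$ we have $V_\mu(c_\lambda) V_\mu(\pi) = V_\mu(\pi) V_\mu(c_\lambda)$, so that $V_\mu(c_\lambda)$ commutes with the entire image of $V_\mu$. Since $V_\mu$ is irreducible, Schur's lemma then forces $V_\mu(c_\lambda)$ to be a scalar multiple of the identity matrix, establishing part one.

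For the second assertion, the nonvanishing, I would use the trace identity \eqref{eq:c_lambda_trace}, which gives $\tr(V_\mu(c_\lambda)) = n! \cdot \tr(V_\mu(b_T))$ for any tableau $T$ of shape $\lambda$. Because part one has already identified $V_\mu(c_\lambda)$ as a scalar times the identity, it suffices to show that this trace is nonzero whenever there exists a tableau $T$ of shape $\lambda$ with $V_\mu(b_T) \neq 0$. The key point is that $b_T$ is (up to normalization) an idempotent: since $Q_T$ is a subgroup of $S_n$ and $\sign$ is a homomorphism on it, a direct computation gives
\begin{equation*}
b_T * b_T = \sum_{\sigma, \tau \in Q_T} \sign(\sigma)\sign(\tau)\,\sigma\tau = |Q_T|\, b_T.
\end{equation*}
Applying $V_\mu$ yields $V_\mu(b_T)^2 = |Q_T| \, V_\mu(b_T)$, so $P := V_\mu(b_T)/|Q_T|$ is a genuine idempotent matrix.

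Now an idempotent matrix $P$ is diagonalizable with all eigenvalues in $\{0,1\}$, so $\tr(P)$ equals the rank of $P$. If $V_\mu(b_T) \neq 0$ for some $T$ of shape $\lambda$, then $P \neq 0$, hence $\mathrm{rank}(P) \geq 1$, and therefore $\tr(V_\mu(b_T)) = |Q_T| \cdot \mathrm{rank}(P) \geq |Q_T| > 0$. Combining this with \eqref{eq:c_lambda_trace} gives $\tr(V_\mu(c_\lambda)) \geq n! \cdot |Q_T| > 0$, and since $V_\mu(c_\lambda)$ is a scalar multiple of the identity with nonzero trace, it must itself be nonzero. The main conceptual obstacle is really only recognizing that one should extract information about $V_\mu(c_\lambda)$ from the trace; once one notices that $b_T$ is a scaled idempotent, the trace of its image cannot vanish without the image itself vanishing, which is exactly what is needed.
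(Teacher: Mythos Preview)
Your proof is correct and follows essentially the same approach as the paper: centrality of $c_\lambda$ plus Schur's lemma for the first part, and the observation that $b_T * b_T = |Q_T|\, b_T$ makes $|Q_T|^{-1} V_\mu(b_T)$ a projection (hence with nonzero trace when nonzero), combined with the trace identity \eqref{eq:c_lambda_trace}, for the second. The paper's argument is the same, only more tersely stated.
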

\begin{proof}
  Fix $\lambda, \mu\in\PP_n$. By \eqref{eq:c_lambda_def} we have for any $\pi\in S_n$,
  \begin{equation*}
    \pi c_\lambda = \sum_{\sigma\in S_n} \pi\sigma b_T\sigma^{-1} =
    \sum_{\sigma\in S_n} \sigma b_t \sigma^{-1} \pi = c_\lambda \pi.
  \end{equation*}
  Hence Schur's lemma implies that $V_\mu(c_\lambda)$ is a multiple
  of the identity.

  Now suppose that $V_\mu(b_T)\neq 0$ for some tableau $T$ of shape
  $\lambda$. Observe that $b_T*b_T = |Q_T| b_T$. Thus $|Q_T|^{-1} V_\mu(b_T)$ is a projection matrix. Hence, $V_\mu(b_T)\neq 0$ implies that $\tr(V_\mu(b_T))\neq 0$. Thus
  $\tr(V_\mu(c_\lambda))\neq 0$ by \eqref{eq:c_lambda_trace}, from
  which $V_\mu(c_\lambda)\neq 0$ follows.
\end{proof}

\begin{proof}[Proof of Lemma~\ref{lemma:span_W_perp}]
Suppose first that $f\in W$ and let $T$ be a tableau of shape
$\lambda$ for some $\lambda\in\PP_n$ satisfying $\lambda_1=n-t-1$.
We will show that $b_T*f=0$ by showing that $V_\mu(b_T*f)=0$ for all
$\mu\in\PP_n$. Fix $\mu\in\PP_n$. If $\mu_1\le n-t-1$ we have
$V_\mu(f)=0$ by Claim~\ref{cl:W_characterization}. If $\mu_1\ge n-t$
we have $V_\mu(b_T)=0$ by Claim~\ref{claim:action_V_bT}$(ii)$. Thus
in all cases $V_\mu(b_T*f) = V_\mu(b_T)V_\mu(f)=0$.

Now suppose that $f\notin W$. By Claim~\ref{cl:W_characterization}
there exists some $\mu\in\PP_n$ with $\mu_1\le n-t-1$ such that
$V_\mu(f)\neq 0$. Putting together
Claim~\ref{claim:action_V_bT}$(i)$ and
Claim~\ref{cl:multiple_of_identity} we have that $V_\mu(c_\mu)$ is a
non-zero multiple of the identity. Thus $V_\mu(c_\mu *
f)=V_\mu(c_\mu)V_\mu(f)\neq 0$. By the definition
\eqref{eq:c_lambda_def} of $c_\mu$, this implies that there exists
some tableau $S$ of shape $\mu$ such that $V_\mu(b_S * f)\neq 0$. By
Claim~\ref{cl:minimal_derivative}, there exists a shape $\lambda\in
\PP_n$ with $\lambda_1=n-t-1$, a tableau $T$ of shape $\lambda$ and
an element $g\in \C S_n$ such that $b_S = g*b_T$. Since
$V_\mu(g)V_\mu(b_T * f) = V_\mu(b_S
* f)\neq 0$, we conclude that $b_T
* f\neq 0$, as required.
\end{proof}

\subsection{The number of $t$-wise
permutations}\label{sec:number_t_wise_permutations} One may use
Theorem~\ref{thm:count_solutions} to estimate the number of $t$-wise
permutations of a given size, as we did for orthogonal arrays and
$t$-designs. To this end, one needs to calculate the parameter
$\rho(W)$ defined by \eqref{eq:rho_formula}. We leave this
calculation for future work but present in this section the results
of numerical calculations which give some evidence that $\rho(W)$
has a nice product structure.

Let $B=S_n$ and $A$ be as in \eqref{eq:combinatorial_basis_A}. Let
$\phi$ be the $B\times A$ matrix whose columns are given by the
$(f_\sigma)$, $\sigma\in A$, defined in \eqref{eq:f_sigma_def}. As
proven in Section~\ref{sec:bases_for_W}, the columns of $\phi$ form
a basis for $W$ and $\LL(\phi)=\Z^A$. Thus
\begin{equation*}
  \rho(W) = \frac{\det(\LL(\phi))}{\sqrt{\det(\phi^t\phi)}} =
  \frac{1}{\sqrt{\det(\phi^t\phi)}}
\end{equation*}
for this matrix $\phi$. Below we present the results of numerical
calculations of $\det(\phi^t\phi)$ for a few small values of $n$ and
$t$.
\begin{equation*}
\begin{array}{lllll}
  n\backslash t & 1 & 2 & 3\\
  3     & 3\cdot2 & 1 & - \\
  4     & 3\cdot2^{18} & 3\cdot2^3 & 1 \\
  5     & 5^{9} 3^{17} 2^{19} & 5^9 3^2 2^{84} & 5\cdot3\cdot2^3 \\
  6     & 5\cdot3^{42}2^{94} & 5^{64}3^{162}2^{276} & ? \\
  7     & 7^{25}5^{37}3^{38}2^{112} & 7^{100}5^{65}3^{627}2^{1150} & ? \\
  8     & 7\cdot5^{50}3^{100}2^{308} & ? & ? \\
\end{array}
\end{equation*}

\section{Proof of main theorems}
\label{sec:proof_main_theorem}

We prove our main theorems, Theorem~\ref{thm:main} and
Theorem~\ref{thm:count_solutions}, in this section. We start by
stating a local central limit theorem from which our main theorems
will follow.

\subsection{Local central limit theorem
statement}\label{sec:LCLT_with_basis} Let $B$ be a finite set and
$V$ be a vector space of functions from $B$ to the rational numbers
$\Q$. Let $\{\phi_a:B \to \Q\}_{a \in A}$ be a basis for $V$, where
$A$ is some finite index set of size $\dim(V)$. This basis is
arbitrary for now but will be chosen in a convenient way in the next
subsection. Let $\phi:B \to \Q^A$ be defined as $\phi(b)_a =
\phi_a(b)$. It may be useful to think of $\phi$ as a $B\times A$
matrix, whose entries are $\phi_a(b)$. Fix $0<p<1$ and define $T$ to
be a random subset of $B$, with each point of $B$ chosen
independently into $T$ with probability $p$. In other words, we let
$\{T_b\}$, $b\in B$, be a collection of independent identically
distributed random variables with $\Pr[T_b=1]=1-\Pr[T_b=0]=p$ and
let $T:=\{b\in B\,:\, T_b=1\}$. Define
\begin{equation}\label{eq:X_def}
X:=\sum_{b \in B} T_b \cdot \phi(b) \in \LL(\phi),
\end{equation}
where $\LL(\phi)$ is the lattice in $\Q^A$ generated by
$\{\phi(b)\}$, $b\in B$. Our main Theorems will follow from a
precise estimate of the probability $\Pr[X = \Ex[X]]$. Along the
way, however, we will pass through estimating $\Pr[X=\lambda]$ for
an arbitrary point $\lambda\in\LL(\phi)$ (though our estimate will
only be meaningful for $\lambda$ close to $\Ex[X]$). Since this is a
useful result in itself, which also requires less assumptions, we
encapsulate it in the following theorem. We note that the mean of
$X$ is given by
\begin{equation*}
  \Ex[X] = p\sum_{b\in B} \phi(b)
\end{equation*}
and the covariance matrix of $X$ is given by
\begin{equation}\label{eq:Sigma_X_def}
  \Sigma[X] := \Ex[(X-\Ex[X])^t(X-\Ex[X])] = p(1-p) \phi^t\phi
\end{equation}
where $\phi^t\phi$ is the symmetric positive definite $A \times A$
matrix satisfying $(\phi^t\phi)_{a,a'} = \sum_{b \in B} \phi(b)_a
\phi(b)_{a'}$. The positive definite property follows from the fact
that the $\{\phi_a\}$, $a\in A$, are linearly independent.

\begin{thm}[Local central limit theorem]\label{thm:LCLT_with_basis}
There exists a constant $C>0$ such that the following is true.
Assume that the following conditions hold for some integers $c_2,c_3
\ge 1$,
\begin{enumerate}
\item Boundedness of $V$: $V$ has a $c_2$-bounded integer basis in $\ell_{\infty}$.
\item Boundedness of $V^{\perp}$: $V^{\perp}$ has a $c_3$-bounded integer basis in $\ell_{1}$.
\item Symmetry: for any $b_1,b_2 \in B$ there exists a symmetry $\pi$ of $V$ satisfying $\pi(b_1)=b_2$.
\end{enumerate}
If
\begin{equation*}
  \min(p|B|, (1-p)|B|) \ge C \cdot c_2 c_3^2\dim(V)^6\log(2c_3\dim(V))^6
\end{equation*}
then for every $\lambda\in\LL(\phi)$,
\begin{equation}\label{eq:LCLT_prob_estimate}
  \Pr[X=\lambda] =
  \frac{\det(\LL(\phi))}{(2\pi)^{\frac{\dim(V)}{2}}\sqrt{\det{\Sigma[X]}}}\left(e^{-\frac{1}{2}(\lambda-\Ex[X])^t\Sigma[X]^{-1}(\lambda-\Ex[X])}+\delta(\lambda)\right)
\end{equation}
with $|\delta(\lambda)|\le \frac{C\dim(V)^3
(\log(2c_2\dim(V)))^{3/2}}{\sqrt{\min(p|B|,(1-p)|B|)}}$.
\end{thm}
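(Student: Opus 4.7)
The plan is to compute $\Pr[X=\lambda]$ via Fourier inversion on the lattice $\LL(\phi)$, isolate the Gaussian contribution near the origin, and show exponential decay of the characteristic function elsewhere. Let $L^\ast := 2\pi \LL(\phi)^\ast$ be the lattice of periods of
\[
  \hat{X}(\theta) := \Ex\bigl[e^{i\langle \theta, X\rangle}\bigr] = \prod_{b\in B}\bigl(1-p+p\,e^{i\langle\theta,\phi(b)\rangle}\bigr),
\]
which factorises across $b$ by independence of the $T_b$. Letting $F$ be a fundamental domain of $L^\ast$, of volume $(2\pi)^{\dim V}/\det(\LL(\phi))$, Fourier inversion on the lattice gives
\[
  \Pr[X=\lambda] \;=\; \frac{\det(\LL(\phi))}{(2\pi)^{\dim V}} \int_F \hat{X}(\theta)\, e^{-i\langle \theta, \lambda\rangle}\, d\theta.
\]
I would then split $F = F_{\mathrm{near}} \sqcup F_{\mathrm{far}}$, with $F_{\mathrm{near}}$ a small neighbourhood of the origin of radius inverse-polynomial in $c_2$ and $\dim V$ (to be tuned at the end).

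On $F_{\mathrm{near}}$ I would expand $\log(1-p+pe^{i\alpha}) = ip\alpha - \tfrac{p(1-p)}{2}\alpha^2 + O(p\,|\alpha|^3)$ with $\alpha = \langle\theta,\phi(b)\rangle$, using $\|\phi\|_\infty \le c_2$ together with the radius of $F_{\mathrm{near}}$ to make each $|\alpha|$ small and control the cubic remainder uniformly. Summing over $b$ gives
\[
  \log \hat X(\theta) = i\langle\theta,\Ex[X]\rangle - \tfrac12 \theta^t \Sigma[X]\theta + E(\theta),
\]
and the Gaussian integral $\int_{\R^A} \exp\!\bigl(i\langle\theta, \Ex[X]-\lambda\rangle - \tfrac12 \theta^t \Sigma[X]\theta\bigr)\, d\theta$ evaluates to $(2\pi)^{\dim V/2}\det(\Sigma[X])^{-1/2} \exp\!\bigl(-\tfrac12 (\lambda-\Ex[X])^t \Sigma[X]^{-1}(\lambda-\Ex[X])\bigr)$, which is precisely the main term in \eqref{eq:LCLT_prob_estimate}. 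The transitive symmetry of $V$ enters here to show that $\Sigma[X] = p(1-p)\phi^t\phi$ is well conditioned: up to $G$-averaging, $\tr(\phi^t\phi) = |B|\cdot \|\phi(b)\|^2$ is independent of $b$, and the orbit of any $\theta$ covers all of $\R^A$ so the smallest singular value can be bounded below in terms of $|B|$ and $\dim V$. This in turn ensures both that the Gaussian tail outside $F_{\mathrm{near}}$ is negligible and that $\int E(\theta)$ against the leading Gaussian delivers the claimed error dependence on $\dim V$.

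The hard part will be the bound on $F_{\mathrm{far}}$, where I need $|\hat X(\theta)|$ to decay fast enough to absorb the $F_{\mathrm{far}}$-contribution into the error term. Using
\[
  |1-p+pe^{i\alpha}|^2 = 1 - 2p(1-p)(1-\cos\alpha) \le \exp\bigl(-2p(1-p)(1-\cos\alpha)\bigr)
\]
and $1-\cos\alpha \ge \tfrac{2}{\pi^2}\,\mathrm{dist}(\alpha,2\pi\Z)^2$, the task reduces to proving a quantitative lower bound of the form
\[
  \sum_{b\in B}\mathrm{dist}\bigl(\langle\theta,\phi(b)\rangle,\,2\pi\Z\bigr)^2 \;\ge\; \frac{\mathrm{dist}(\theta,L^\ast)^2}{\mathrm{poly}(c_3,\dim V)} \qquad (\theta\in F).
\]
This is exactly where the $V^\perp$ hypothesis is essential. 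If the left-hand side is small, I would round each $\langle\theta,\phi(b)\rangle$ to the nearest $2\pi n_b$, giving an $n\in\Z^B$; pairing $n$ against any $c_3$-bounded integer vector $\gamma \in V^\perp\cap\Z^B$ forces the integer $\sum_b \gamma_b n_b$ to have absolute value strictly less than $1$, and hence to vanish. Therefore $n \in (V^\perp)^\perp\cap\Z^B$, so there is $\theta' \in L^\ast$ with $\langle\theta',\phi(b)\rangle = 2\pi n_b$; bounding $\|\theta-\theta'\|$ via the smallest singular value of $\phi$ (again controlled using symmetry) then closes the argument.

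Putting the pieces together, the cubic Taylor remainder on $F_{\mathrm{near}}$ integrates against the leading Gaussian to produce an error of order $\dim(V)^3(\log(2c_2\dim V))^{3/2}/\sqrt{p(1-p)|B|}$, the Gaussian tail outside $F_{\mathrm{near}}$ is exponentially small in $\dim V$, and the $F_{\mathrm{far}}$-contribution is exponentially small once $p(1-p)|B| \ge C\cdot c_2c_3^2 \dim(V)^6\log(2c_3\dim V)^6$; summing the three sources of error yields the stated bound on $\delta(\lambda)$. With Theorem~\ref{thm:LCLT_with_basis} in hand, Theorems~\ref{thm:main} and \ref{thm:count_solutions} then follow: the divisibility and constant-function assumptions ensure that $\Ex[X]\in \LL(\phi)$ and that $X=\Ex[X]$ simultaneously forces $|T|=N$ and \eqref{eq:prob_def_3}, while \eqref{eq:LCLT_prob_estimate} applied at $\lambda=\Ex[X]$ gives both positivity and the promised count via the formula \eqref{eq:rho_formula} for $\rho(V)$.
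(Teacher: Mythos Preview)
Your overall architecture---Fourier inversion on a fundamental domain, a near/far split, Gaussian approximation near zero, and the $V^\perp$-pairing argument to handle the integer parts---matches the paper. The $V^\perp$ step (pairing the rounded vector $n=(n_b)$ against short integer vectors in $V^\perp$ to force $n\in V$) is exactly right and is the paper's Claim~4.10.

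The genuine gap is in how you use the symmetry hypothesis. You assert that transitivity makes $\Sigma[X]=p(1-p)\phi^t\phi$ ``well conditioned'' and that the smallest singular value of $\phi$ can be bounded below in terms of $|B|$ and $\dim V$; you then invoke this singular value bound twice, once to control the cubic remainder on $F_{\mathrm{near}}$ and once to convert $\sum_b r_b^2$ into a Euclidean bound on $\mathrm{dist}(\theta,L^\ast)$. But the hypotheses do \emph{not} give a Euclidean condition-number bound on $\phi^t\phi$: a symmetry $\pi$ comes with a linear map $\tau$ on $\Q^A$ satisfying $\phi(\pi(b))=\tau(\phi(b))$, and the relation $\tau R\tau^t=R$ only says that $\tau$ is an isometry for the $R$-inner product, not for the Euclidean one. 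Nothing prevents the $c_2$-bounded integer basis from being badly conditioned in the Euclidean sense.

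The paper avoids any condition-number estimate by working entirely in the $R$-norm $\|\theta\|_R^2=\tfrac{1}{|B|}\theta^t R\theta$ and replacing your singular-value step with a \emph{tameness lemma} (Lemma~4.2): there is $M=O\bigl((|A|\log(2c_2|A|))^{3/2}\bigr)$ such that
\[
  \max_{b}|\langle\phi(b),\theta\rangle|\le M\,\|\theta\|_R
  \qquad\text{and}\qquad
  \max_b|r_b|\le M\Bigl(\tfrac{1}{|B|}\textstyle\sum_b r_b^2\Bigr)^{1/2}.
\]
This is proved not from spectral considerations but via a \emph{local correctability} argument: pigeonhole on $\{0,1\}$-combinations of rows (using only $\|\phi\|_\infty\le c_2$) produces a short $\{-1,0,1\}$-relation supported outside any small ``bad'' set, and transitivity transports it so that the fixed bad row is expressed as a short integer combination of good rows. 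The first inequality then bounds the cubic term by $O(M\|\theta\|_R^3 N)$, which integrates against the Gaussian (in the $R$-norm!) to give exactly the stated $M|A|^{3/2}/\sqrt{N}$ error. For the far region, taking $D$ to be the Voronoi cell of $0$ in the $R$-norm gives $\|\theta\|_R\le\|\theta-\alpha\|_R$ for all $\alpha\in L$, so once your $V^\perp$-argument produces $\alpha$ with $\langle\phi(b),\alpha\rangle=n_b$, one has $\|\theta\|_R^2\le\tfrac{1}{|B|}\sum_b r_b^2$ directly---no singular values needed. The complementary case $\max_b|r_b|\ge 1/c_3$ is handled by the second tameness inequality. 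This local-correctability mechanism is the missing idea in your sketch; without it (or a genuine substitute) the error bounds you claim do not follow from the stated hypotheses.
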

We point out explicitly that this theorem does not require the
divisibility or constant functions assumptions of our main theorems,
Theorem~\ref{thm:main} and Theorem~\ref{thm:count_solutions}. In
Subsection~\ref{sec:deduction_main_thms} below we explain how our
main theorems follow (easily) from this local central limit theorem
and the extra assumptions by applying the theorem with
$\lambda=\Ex[X]$.

We also note that the local central limit theorem does not depend on
our choice of basis $\phi$ in the sense that if it holds for one
basis it holds for all bases. We make this fact more explicit in
Subsection~\ref{sec:LCLT_basis_free} where we state an equivalent
basis-free version of the theorem.

The local central limit theorem is proved in
Subsections~\ref{sec:Fourier_analysis} to
\ref{sec:proof_of_main_thm_from_lemmas} below.

\subsection{Fourier analysis}\label{sec:Fourier_analysis}

We continue with the notation of the previous section and assume the
conditions of Theorem~\ref{thm:LCLT_with_basis}. We fix
$\{\phi_a\}$, $a\in A$ to be the basis of integer-valued functions,
satisfying $\|\phi_a\|_{\infty}\le c_2$ for all $a\in A$, whose
existence is guaranteed by the boundedness condition for $V$. We
also make the simplifying assumption
\begin{equation*}
  p\le \frac{1}{2}.
\end{equation*}
Near the end of the proof we will show how to get rid of this
assumption by utilizing the bijection $T\mapsto B\setminus T$.
Finally, we denote
\begin{equation*}
  N:=p|B|.
\end{equation*}
We stress that $N$ need not be an integer in our proof of the local
central limit theorem. However, when we later deduce our main
theorems from the local central limit theorem, we will choose $p$ in
such a way that $N$ will be an integer.

Our main technique to study the distribution of $X$ is Fourier
analysis. The Fourier transform of $X$ is the function
$\hat{X}:\R^A\to\C$ defined by
\begin{equation*}
  \hat{X}(\theta) := \Ex[e^{2 \pi i \ip{X,\theta}}],
\end{equation*}
where $\ip{X,\theta}:=\sum_{a \in A} X_a \theta_a$. Define the dual
lattice $L$ to $\LL(\phi)$ (sometimes called the annihilator or
reciprocal lattice of $\LL(\phi)$), as the set of vectors in $\R^A$
having an integer inner product with the vectors of $\LL(\phi)$.
That is,
$$
L := \{\theta \in \R^A\, :\, \ip{\theta,\lambda} \in \Z \quad
\forall \lambda \in \LL(\phi)\}.
$$
Noting that $\LL(\phi)$ has full rank, since the $\{\phi_a\}$, $a\in
A$, are linearly independent, it follows that $L$ is also a full
rank lattice and the relation $\det(\LL(\phi))\det(L)=1$ holds.
Since $e^{2\pi i \ip{X,\alpha}}=1$ almost surely when $\alpha\in L$,
we see that $\hat{X}$ is $L$-periodic,
\begin{equation}\label{eq:FT_periodicity}
    \hat{X}(\theta+\alpha) = \hat{X}(\theta)\quad\forall\theta\in\R^A,\, \alpha\in L.
\end{equation}
The covariance matrix of $X$ provides a natural norm to work with in
Fourier space. Define
$$
R:= \phi^t\phi,
$$
so that $\Sigma[X]=p(1-p)R$ by \eqref{eq:Sigma_X_def}. As mentioned,
$R$ is a symmetric positive definite $A \times A$ matrix satisfying
$R_{a,a'} = \sum_{b \in B} \phi(b)_a \phi(b)_{a'}$. We define a norm
in Fourier space by
\begin{equation*}
  \|\theta\|_R := \left(\frac{1}{|B|}\theta^t R\theta\right)^{1/2} = \left(\frac{1}{|B|}\sum_{b \in B}
\ip{\phi(b),\theta}^2\right)^{1/2}\qquad(\theta\in\R^A).
\end{equation*}
Balls in the $R$-norm are denoted by
$$
\B_{R}(\eps) := \{\theta \in \R^A\,:\, \|\theta\|_R\le \eps\}.
$$
Let $D$ be the Voronoi cell of $0$ in the lattice $L$, with respect
to the $R$-norm. That is,
\begin{equation}\label{eq:Voronoi_cell}
  D:=\{\theta\in\R^A\,:\, \|\theta\|_R< \|\theta-\alpha\|_R\quad
\forall \alpha \in L\setminus\{0\}\}.
\end{equation}
Observe that $D$ is a bounded set since $L$ has full rank. Moreover,
$\alpha+D$ and $\alpha'+D$ are disjoint for distinct
$\alpha,\alpha'\in L$, and $\cup_{\alpha\in L} (\alpha+D)$ covers
all of $\R^A$ except a set of Lebesgue measure zero (since only a
Lebesgue measure zero of points in $\R^A$ are equidistant to two
points in $L$). It follows that
$\vol(D)=\det(L)=\det(\LL(\phi))^{-1}$, where $\vol$ denotes
Lebesgue measure, and that we have the following inversion formula.
\begin{fact}[Fourier inversion formula on
lattices]\label{fact:Fourier_inversion}
$$
\Pr[X=\lambda] = \det(\LL(\phi))\int_D \hat{X}(\theta)
    e^{-2 \pi i \ip{\lambda,\theta}} d
    \theta\quad\forall\lambda\in\LL(\phi).
$$
\end{fact}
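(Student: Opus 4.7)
The plan is to carry out the standard Fourier inversion on the lattice $\LL(\phi)$, using the fact that $D$ is a fundamental domain for the dual lattice $L$. First I would expand the Fourier transform directly from its definition. Since $X$ takes values in the finite set $\{\phi(\gamma) : \gamma\in\{0,1\}^B\}\subset \LL(\phi)$, we have the finite sum
\begin{equation*}
\hat{X}(\theta) \;=\; \sum_{\mu \in \LL(\phi)} \Pr[X=\mu]\, e^{2\pi i \ip{\mu,\theta}}.
\end{equation*}
Substituting into the right-hand side of the claimed formula and exchanging the (finite) sum with the integral yields
\begin{equation*}
\det(\LL(\phi))\int_D \hat{X}(\theta) e^{-2\pi i \ip{\lambda,\theta}} d\theta
\;=\; \det(\LL(\phi)) \sum_{\mu \in \LL(\phi)} \Pr[X=\mu] \cdot I(\mu-\lambda),
\end{equation*}
where $I(\nu) := \int_D e^{2\pi i \ip{\nu,\theta}} d\theta$ for $\nu \in \LL(\phi)$.

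The heart of the proof is to compute $I(\nu)$ for $\nu\in\LL(\phi)$. When $\nu=0$ this is just $\vol(D) = \det(L) = \det(\LL(\phi))^{-1}$, by the paragraph preceding the statement. When $\nu\neq 0$, the key observation is that the integrand is $L$-periodic: for any $\alpha\in L$, $\ip{\nu,\alpha}\in\Z$ by the definition of $L$ as the dual lattice, so $e^{2\pi i \ip{\nu,\theta+\alpha}}=e^{2\pi i \ip{\nu,\theta}}$. Because $D$ is a fundamental domain for $L$, every translate $D-\theta_0$ is also a fundamental domain, and the integral of an $L$-periodic function over any fundamental domain is the same. Thus for any $\theta_0\in\R^A$,
\begin{equation*}
I(\nu) \;=\; \int_{D-\theta_0} e^{2\pi i \ip{\nu,\theta+\theta_0}} d\theta
\;=\; e^{2\pi i \ip{\nu,\theta_0}} \int_{D-\theta_0} e^{2\pi i \ip{\nu,\theta}} d\theta
\;=\; e^{2\pi i \ip{\nu,\theta_0}} I(\nu).
\end{equation*}
Since $\nu\neq 0$ there exists some $\theta_0\in\R^A$ with $\ip{\nu,\theta_0}\notin\Z$, which forces $I(\nu)=0$.

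Combining the two cases, only the term $\mu=\lambda$ survives in the sum, giving
\begin{equation*}
\det(\LL(\phi))\int_D \hat{X}(\theta) e^{-2\pi i \ip{\lambda,\theta}} d\theta
\;=\; \det(\LL(\phi)) \cdot \Pr[X=\lambda] \cdot \det(\LL(\phi))^{-1} \;=\; \Pr[X=\lambda],
\end{equation*}
as desired. There is no real obstacle here; the only point requiring mild care is the justification that every translate of the Voronoi cell $D$ is also a fundamental domain for $L$, which follows from $D$ tiling $\R^A$ under $L$-translation up to a set of Lebesgue measure zero, a fact already noted in the excerpt.
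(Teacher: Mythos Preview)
Your argument is correct and is the standard proof of Fourier inversion on a lattice. The paper itself does not supply a proof of this statement: it is recorded as a \emph{Fact} and then used immediately. So there is no paper proof to compare against; your write-up fills in exactly the details one would expect, using only the properties already established in the preceding paragraph (that $D$ tiles $\R^A$ under $L$-translates up to measure zero, that $\vol(D)=\det(\LL(\phi))^{-1}$, and that functions of the form $e^{2\pi i\ip{\nu,\cdot}}$ with $\nu\in\LL(\phi)$ are $L$-periodic).
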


Thus, our goal from now on is to understand the Fourier transform of
$X$. We start with an explicit formula for $\hat{X}$.

\begin{claim}\label{cl:structure_fourier_X}
We have
$$
\hat{X}(\theta) = \prod_{b \in B}
    \left(1-p+p e^{2 \pi i \cdot \ip{\phi(b),\theta}}\right).
$$
\end{claim}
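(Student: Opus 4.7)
The plan is to prove this by direct computation, starting from the definition of the Fourier transform and exploiting the independence of the indicator variables $T_b$. This is essentially the standard fact that the characteristic function of a sum of independent random variables factors as a product, applied in the lattice Fourier setting.

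First I would unfold the definition. From \eqref{eq:X_def} we have $X=\sum_{b\in B}T_b\phi(b)$, so by bilinearity of the pairing,
\begin{equation*}
\langle X,\theta\rangle \;=\; \sum_{b\in B} T_b\langle \phi(b),\theta\rangle,
\end{equation*}
and therefore
\begin{equation*}
e^{2\pi i\langle X,\theta\rangle} \;=\; \prod_{b\in B} e^{2\pi i\, T_b\langle \phi(b),\theta\rangle}.
\end{equation*}
Taking expectations and using the fact that the random variables $\{T_b\}_{b\in B}$ are mutually independent, the expectation of the product equals the product of the expectations:
\begin{equation*}
\hat X(\theta) \;=\; \mathbb{E}\Bigl[\prod_{b\in B} e^{2\pi i\, T_b\langle \phi(b),\theta\rangle}\Bigr] \;=\; \prod_{b\in B}\mathbb{E}\bigl[e^{2\pi i\, T_b\langle \phi(b),\theta\rangle}\bigr].
\end{equation*}

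To finish, I would evaluate each factor using that $T_b$ is Bernoulli with $\Pr[T_b=1]=p$ and $\Pr[T_b=0]=1-p$, so that $e^{2\pi i T_b\langle \phi(b),\theta\rangle}$ takes the value $1$ with probability $1-p$ and the value $e^{2\pi i\langle \phi(b),\theta\rangle}$ with probability $p$. Thus each factor equals $(1-p)+p\,e^{2\pi i\langle \phi(b),\theta\rangle}$, yielding the claimed product formula.

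There is no real obstacle here; the only thing worth emphasizing is that the independence of the summands $T_b\phi(b)$ (inherited from the independence of the $T_b$) is exactly what allows the characteristic function to factor, and it is precisely this product structure that will be exploited in the subsequent Fourier-analytic estimates of $\hat X$ on and off the Voronoi cell $D$.
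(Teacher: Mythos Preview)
Your proof is correct and follows exactly the same approach as the paper: unfold the definition of $X$, use independence of the $T_b$ to factor the expectation, and evaluate each Bernoulli factor explicitly. The paper's proof is essentially a one-line version of what you wrote.
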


\begin{proof}
By definition $X=\sum_{b \in B} T_b \phi(b)$, where $T_b \in
\{0,1\}$ are independent with $\Pr[T_b=1]=p$. Thus
\begin{align*}
\hat{X}(\theta) &= \Ex[e^{2 \pi i \ip{X,\theta}}] = \Ex
    [e^{2 \pi i \sum_{b \in B} T_b \ip{\phi(b),\theta}}]\\
&= \prod_{b \in B} \Ex[e^{2 \pi i \; T_b \ip{\phi(b),\theta}}] =
\prod_{b \in B} (1-p+pe^{2 \pi i \ip{\phi(b),\theta}}).\qedhere
\end{align*}
\end{proof}

An important ingredient in controlling $\hat{X}$ is the following
property. The terms $\ip{\phi(b),\theta}$ which arise in the Fourier
transform $\hat{X}(\theta)$ are tame in the following sense: if most
of them are small, then all of them are small; and if most of them
are close to integers, then all of them are close to integers. This
is captured by the following lemma. The constant $c_2$ appearing in
the lemma is the one given in the boundedness assumption for $V$.
\begin{lemma}\label{lemma:phi_FC_tame}
There exists a universal constant $C>0$ such that if we set
\begin{equation}\label{eq:M_def}
M:=C(|A| \log(2c_2|A|))^{3/2}
\end{equation}
then for every $\theta \in \R^A$:
\begin{enumerate}
\item
$$
\max_{b \in B} |\ip{\phi(b),\theta}| \le  M
\left(\frac{1}{|B|}\sum_{b \in B} \ip{\phi(b),\theta}^2\right)^{1/2}
= M\|\theta\|_R.
$$

\item Write $\ip{\phi(b),\theta}=n_b+r_b$, where $n_b \in \Z$ and $r_b \in [-1/2,1/2)$. Then
$$
\max_{b \in B} |r_b| \le  M \left(\frac{1}{|B|}\sum_{b \in B}
r_b^2\right)^{1/2}.
$$
\end{enumerate}
\end{lemma}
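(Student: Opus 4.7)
The plan is to use the symmetry hypothesis for Part 1 and the $V^\perp$-boundedness for Part 2. For Part 1, observe that $f(b):=\ip{\phi(b),\theta}$ lies in $V$ and satisfies $\|f\|_2^2=|B|\,\|\theta\|_R^2$, so Part 1 reduces to proving $\|f\|_\infty\le\sqrt{|A|}\,\|f\|_2/\sqrt{|B|}$ for every $f\in V$, which is comfortably below the target $M$. To see this, let $K_b\in V$ denote the reproducing kernel at $b$, characterized by $f(b)=\ip{f,K_b}$ for every $f\in V$. Taking the trace of the orthogonal projection onto $V$ gives $\sum_{b\in B}\|K_b\|_2^2=\dim(V)$, and transitivity of the symmetry group---which commutes with the projection onto $V$---forces $\|K_b\|_2^2$ to be constant in $b$, hence equal to $|A|/|B|$. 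Cauchy-Schwarz then yields $|f(b)|=|\ip{f,K_b}|\le\sqrt{|A|/|B|}\cdot\|f\|_2$, completing Part 1.

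\medskip

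For Part 2, the central observation is the following rounding lemma: if $\|r\|_\infty<1/(2c_3)$, then the vector $r$ actually lies in $V$ (not merely in $V+\Z^B$). Indeed, with $f=r+n\in V$ and any integer $\delta\in V^\perp$ of $\ell_1$-norm at most $c_3$,
\[
\sum_{b\in B}\delta_b r_b \;=\; -\sum_{b\in B}\delta_b n_b \;\in\;\Z,
\]
while the left-hand side has absolute value at most $c_3\cdot\|r\|_\infty<1/2$, so it must vanish. Since such $\delta$'s span $V^\perp$, we conclude $r\perp V^\perp$, i.e.\ $r\in V$, and Part 1 applied to $r$ yields the desired inequality in this regime.

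\medskip

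The main obstacle is the complementary regime $\|r\|_\infty\ge 1/(2c_3)$, where one must establish a matching lower bound $\|r\|_2/\sqrt{|B|}\gtrsim\|r\|_\infty/M$ for valid fractional vectors. A first step is to decompose $r=r_V+r_{V^\perp}$ and to note that $r_{V^\perp}=-n_{V^\perp}$ for some $n\in\Z^B\setminus V$; testing against a $c_3$-bounded integer $\delta\in V^\perp$ with $\ip{n,\delta}\ne 0$ gives $|\ip{r_{V^\perp},\delta}|\ge 1$, whence $\|r\|_2\ge 1/c_3$. This is much weaker than the $\sqrt{|B|}$-scale bound that is actually needed, and my plan for closing the gap is to average the single-$\delta$ estimate over the $G$-orbit of the $V^\perp$-basis and combine it with the $\ell_\infty$-bound $c_2$ on the $V$-basis---presumably through an $\eps$-net or dyadic truncation on the magnitudes $|r_b|$, which is where the $\log(2c_2|A|)$ factor should enter. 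Tying all three hypotheses together to reach the final $|A|^{3/2}(\log\cdot)^{3/2}$ bound is the primary technical hurdle of the lemma.
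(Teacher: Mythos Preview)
Your Part 1 is correct and in fact sharper than the paper's: the reproducing-kernel argument gives $\max_b|\ip{\phi(b),\theta}|\le\sqrt{|A|}\,\|\theta\|_R$, well inside the claimed $M$. The paper does not use this; it proves Parts 1 and 2 by a single unified mechanism (see below), which is why its Part 1 bound is looser.

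Part 2, however, has a genuine gap, and your sketched plan for the regime $\|r\|_\infty\ge 1/(2c_3)$ does not close it. First, note that the lemma's $M$ depends only on $c_2$ and $|A|$, not on $c_3$; the paper explicitly remarks that this lemma uses only the boundedness of $V$ and the symmetry, and \emph{not} the boundedness of $V^\perp$. So any argument that routes through $c_3$ is already off target. Second, and more fundamentally, your reproducing-kernel bound expresses $f(b)$ as a \emph{real} linear combination of the values $f(b')$, and real combinations do not commute with taking fractional parts; this is why your Part 1 argument cannot simply be re-applied to $r$. Testing against a single $\delta\in V^\perp$ gives only $\|r\|_2\ge 1/c_3$, i.e.\ $\frac{1}{|B|}\sum r_b^2\ge 1/(c_3^2|B|)$, which is off by a factor of $|B|$ from what you need; averaging over the $G$-orbit of $\delta$ does not help, because the lower bound $|\ip{r,\delta}|\ge 1$ holds only for \emph{some} $\delta$, not uniformly over the orbit.

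The paper's route is different and worth knowing. It first proves, via pigeonhole plus symmetry, a \emph{local correctability} statement: for $s=O(|A|\log(2c_2|A|))$ and $\delta=1/(8s)$, any $\phi(e)$ can be written as $\sum_{b\notin E}\gamma_b\phi(b)$ with $\gamma\in\Z^{B\setminus E}$ and $\|\gamma\|_1\le s$, whenever $|E|\le\delta|B|$. The crucial point is that $\gamma$ is \emph{integer}-valued, so the identity $\ip{\phi(e),\theta}=\sum_b\gamma_b\ip{\phi(b),\theta}$ survives reduction mod $1$: one gets $r_e\equiv\sum_b\gamma_b r_b\pmod 1$, hence $|r_e|\le s\max_{b\notin E}|r_b|$. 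Both parts of the lemma then follow by the same Markov-type argument (take $E$ to be the set where $|\ip{\phi(b),\theta}|$, resp.\ $|r_b|$, exceeds $\beta/\sqrt{\delta}$), and one reads off $M=s/\sqrt{\delta}=O(s^{3/2})$. This integer-coefficient local expression is the missing idea in your Part 2.
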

We note that the proof of the lemma uses only the boundedness
assumption for $V$ and the assumption that $V$ has a transitive
symmetry group and it is the only place where the symmetry
assumption is used. We prove Lemma~\ref{lemma:phi_FC_tame} in
Subsection~\ref{subsec:tame}. The main ingredient in its proof is
the notion of local correctability of the map $\phi$.

Our next step is to approximate $\hat{X}$ near zero. The next lemma
achieves this by approximating $\hat{X}(\theta)$ by its Taylor
expansion at zero for $\theta\in\B_R(\eps)$.

\begin{lemma}[Estimating the Fourier transform near zero]
\label{lemma:estimate_fourier_near_zero} For all
$0<\eps\le\frac{1}{8M}$ and $\theta\in \B_R(\eps)$,
$$
\hat{X}(\theta) = \exp(2 \pi i \cdot \ip{\Ex[X], \theta} - 2 \pi^2
\cdot \theta^t \Sigma[X] \theta  + \delta(\theta))
$$
where $|\delta(\theta)|= O(M \|\theta\|_R^3 N)$.
\end{lemma}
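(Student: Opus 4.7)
\medskip

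\noindent\textbf{Proof plan for Lemma~\ref{lemma:estimate_fourier_near_zero}.}

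The plan is to start from the product formula $\hat X(\theta)=\prod_{b\in B}g_b$ with $g_b:=1-p+pe^{2\pi i\alpha_b}$ and $\alpha_b:=\ip{\phi(b),\theta}$ given by Claim~\ref{cl:structure_fourier_X}, take the logarithm factor by factor, Taylor expand each $\log g_b$ to second order in $\alpha_b$, and sum. Since $\theta\in\B_R(\eps)$ and $\eps\le 1/(8M)$, Lemma~\ref{lemma:phi_FC_tame}(1) gives $|\alpha_b|\le M\|\theta\|_R\le 1/8$ for every $b$; in particular
$|g_b-1|=|p(e^{2\pi i\alpha_b}-1)|\le 2\pi p|\alpha_b|\le \pi/8<1$, so each $g_b$ sits in the open right half-plane and the principal branch of $\log g_b$ is well-defined. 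Consequently $\hat X(\theta)=\exp\bigl(\sum_b\log g_b\bigr)$, and it suffices to show that
\[
\sum_{b\in B}\log g_b
\;=\;2\pi i\,\ip{\Ex[X],\theta}\,-\,2\pi^2\,\theta^t\Sigma[X]\theta\,+\,\delta(\theta)
\]
with $|\delta(\theta)|=O(M\|\theta\|_R^3 N)$.

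\medskip

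For the local expansion I would set $y_b:=e^{2\pi i\alpha_b}-1=2\pi i\alpha_b-2\pi^2\alpha_b^2+O(|\alpha_b|^3)$ and use $\log(1+py_b)=py_b-\tfrac12 p^2y_b^2+O(p^3|y_b|^3)$, valid since $|py_b|\le 1/2$. Collecting terms yields
\[
\log g_b\;=\;2\pi i\,p\,\alpha_b\;-\;2\pi^2 p(1-p)\,\alpha_b^2\;+\;\eps_b,
\qquad|\eps_b|\le C\,p\,|\alpha_b|^3,
\]
for a universal constant $C$; the crucial point is that \emph{every} remainder term on the right-hand side carries at least one factor of $p$ (because $y_b=0$ when $p=0$). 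Summing the main terms over $b$, the linear contribution gives $2\pi i\,p\sum_b\ip{\phi(b),\theta}=2\pi i\,\ip{\Ex[X],\theta}$, and the quadratic contribution gives $-2\pi^2 p(1-p)\sum_b\ip{\phi(b),\theta}^2=-2\pi^2\theta^t\Sigma[X]\theta$ by the identities $\Ex[X]=p\sum_b\phi(b)$ and $\Sigma[X]=p(1-p)\phi^t\phi$ from \eqref{eq:Sigma_X_def}.

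\medskip

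The one place that requires care, and which I expect to be the main obstacle, is bounding the cumulative remainder $\delta(\theta)=\sum_b\eps_b$. A naive per-term bound $|\alpha_b|\le M\|\theta\|_R$ would inflate each $|\eps_b|$ by $M^3$ and give a useless total of order $pM^3|B|\|\theta\|_R^3$. The fix is to combine the tameness lemma with a Hölder-type decomposition: write
\[
\sum_{b\in B}|\alpha_b|^3\;\le\;\Bigl(\max_{b\in B}|\alpha_b|\Bigr)\sum_{b\in B}\alpha_b^2,
\]
use Lemma~\ref{lemma:phi_FC_tame}(1) for $\max_b|\alpha_b|\le M\|\theta\|_R$, and use the identity $\sum_b\alpha_b^2=|B|\|\theta\|_R^2$ (by definition of $\|\cdot\|_R$) for the quadratic sum---the latter importantly has no extra $M$. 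Combining with the factor of $p$ extracted above,
\[
|\delta(\theta)|\;\le\;Cp\sum_b|\alpha_b|^3\;\le\;Cp\cdot M\|\theta\|_R\cdot|B|\,\|\theta\|_R^2\;=\;CM\,\|\theta\|_R^3\,N,
\]
using $N=p|B|$. This gives the claimed bound and completes the proof.
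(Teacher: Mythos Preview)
Your proposal is correct and follows essentially the same route as the paper: factorwise Taylor expansion of $\log(1-p+pe^{2\pi i\alpha_b})$ to second order, identification of the linear and quadratic sums with $\ip{\Ex[X],\theta}$ and $\theta^t\Sigma[X]\theta$, and the crucial cubic-error bound $\sum_b|\alpha_b|^3\le(\max_b|\alpha_b|)\sum_b\alpha_b^2\le M\|\theta\|_R\cdot|B|\|\theta\|_R^2$ via Lemma~\ref{lemma:phi_FC_tame}(1). The paper packages the single-factor expansion as a separate Claim~\ref{claim:approx_f}, but the argument is the same.
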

In this lemma as well as in the remainder of the paper we use the
$O(\cdot)$ notation to hide universal constants, independent of all
other parameters. We prove
Lemma~\ref{lemma:estimate_fourier_near_zero} in
Subsection~\ref{subsec:fourier_near_zero}. We next derive an upper
bound on the Fourier transform at points which are far from zero.
Recalling that $\hat{X}(\theta)$ is an $L$-periodic function, such a
bound can only hold for $\theta$ bounded away from the points of
$L$. We achieve this by requiring $\theta$ to belong to
$D\setminus\B_R(\eps)$.

\begin{lemma}[Bounding the Fourier transform far from $L$]
\label{lemma:estimate_fourier_far_L} For all $\eps>0$ and $\theta\in
D\setminus \B_R(\eps)$,
$$
|\hat{X}(\theta)| \le \exp(-\beta^2 N)
$$
where $\beta = \beta(\eps) = \min(\eps, \frac{1}{c_3 M})$.
\end{lemma}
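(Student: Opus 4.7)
The plan is to leverage the product formula $\hat X(\theta) = \prod_{b\in B}(1-p+p e^{2\pi i\langle\phi(b),\theta\rangle})$ from Claim~\ref{cl:structure_fourier_X}. Each factor satisfies $|1-p+pe^{i\xi}|^2 = 1-2p(1-p)(1-\cos\xi) \le \exp(-16\,p(1-p)\operatorname{dist}(\xi/(2\pi),\Z)^2)$, using $1-\cos(2\pi y) \ge 8y^2$ for $|y|\le 1/2$. Writing $\langle\phi(b),\theta\rangle = n_b + r_b$ with $n_b\in\Z$ and $r_b\in[-1/2,1/2)$ as in Lemma~\ref{lemma:phi_FC_tame}(2), and recalling $p\le 1/2$ and $N=p|B|$, I would reduce the lemma to showing that $\sum_{b\in B} r_b^2 \gtrsim |B|\beta^2$ for every $\theta\in D\setminus\B_R(\eps)$.

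I would then split into two cases depending on whether $\max_b |r_b|$ exceeds $\tfrac{1}{2c_3}$. The easy case is when some $|r_b|>\tfrac{1}{2c_3}$: Lemma~\ref{lemma:phi_FC_tame}(2) then forces $\tfrac{1}{|B|}\sum_b r_b^2 \ge \tfrac{1}{4c_3^2 M^2}$, which is $\ge \beta^2$ because $\beta\le 1/(c_3 M)$.

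The interesting case is when $|r_b| \le \tfrac{1}{2c_3}$ for every $b$, and this is where the $\ell_1$-boundedness of $V^{\perp}$ enters crucially. For any $\gamma\in V^{\perp}\cap\Z^B$ with $\|\gamma\|_1\le c_3$ one has $\sum_b\gamma_b\phi(b)=0$ (since $\gamma\cdot\phi_a=0$ for each basis element), hence $\sum_b\gamma_b\langle\phi(b),\theta\rangle=0$, so $\sum_b\gamma_b n_b = -\sum_b\gamma_b r_b$ has absolute value at most $c_3\cdot\tfrac{1}{2c_3}=\tfrac{1}{2}$; being an integer, it must vanish. Because such $\gamma$ span $V^{\perp}$, the integer vector $n=(n_b)_{b\in B}$ lies in $V$, so I can write $n=\sum_a\tilde\theta_a\phi_a$ for some $\tilde\theta\in\Q^A$. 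Then $\langle\phi(b),\tilde\theta\rangle=n_b\in\Z$ for every $b$, so $\tilde\theta\in L$. Since $\theta$ lies in the Voronoi cell $D$ of $0$ in $L$ under the $R$-norm, $\|\theta\|_R \le \|\theta-\tilde\theta\|_R$, and expanding the right-hand side gives $\|\theta-\tilde\theta\|_R^2 = \tfrac{1}{|B|}\sum_b\langle\phi(b),\theta-\tilde\theta\rangle^2 = \tfrac{1}{|B|}\sum_b r_b^2$. Combining with $\|\theta\|_R>\eps\ge\beta$ yields $\sum_b r_b^2 > |B|\beta^2$, as required.

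The main obstacle is the Case A argument: translating the pointwise smallness of the $r_b$'s into a global lattice statement. The bridge is supplied precisely by the $\ell_1$-LDPC-type boundedness of $V^{\perp}$, which converts the sub-integer residue relations into exact vanishing and thereby certifies a lattice representative $\tilde\theta\in L$; the Voronoi property of $D$ then forces $\tilde\theta$ to be a faithful proxy for $\theta$ itself. The precise shape of the threshold $\beta = \min(\eps,1/(c_3 M))$ is the honest accounting of exactly these two cases.
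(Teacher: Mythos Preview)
Your proposal is correct and follows essentially the same route as the paper's proof: the same factor-by-factor bound on $|\hat X(\theta)|$ in terms of $\sum_b r_b^2$, the same two-case split on $\max_b|r_b|$ (the paper uses the threshold $1/c_3$ rather than $1/(2c_3)$, which is immaterial), the same use of the $\ell_1$-boundedness of $V^\perp$ to conclude $(n_b)\in V$ and hence produce a lattice point $\tilde\theta\in L$, and the same Voronoi argument to convert $\theta\in D$ into $\|\theta\|_R^2\le \tfrac1{|B|}\sum_b r_b^2$. Your constants are in fact slightly sharper than the paper's (your per-factor estimate gives $|\hat X(\theta)|\le\exp(-8p(1-p)\sum_b r_b^2)$, which dominates the paper's $\exp(-p\sum_b r_b^2)$ when $p\le 1/2$), so the ``$\gtrsim$'' can be replaced by an honest inequality yielding the stated bound $\exp(-\beta^2 N)$.
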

We prove Lemma~\ref{lemma:estimate_fourier_far_L} in
Subsection~\ref{subsec:fourier_far_L}. This lemma is the only place
where we use the boundedness assumption for $V^\perp$.

\subsection{Local correction}
\label{subsec:tame}

A map $\psi:B \to \Z^A$ is said to be locally correctable, if for
any small subset $E \subset B$ and any $e \in E$, we can express
$\psi(e)$ as a short integer combination of $\{\psi(b): b \in B
\setminus  E\}$. This is an analog of the local correction property
of codes, usually studied over finite fields.

\begin{dfn}[Locally correctable]
A map $\psi:B \to \Z^A$ is called {\em $(\delta,s)$-locally
correctable} if for any $E \subset B$ of size $1\le|E| \le \delta
|B|$ and any $e \in E$, there exists a $\gamma \in \Z^{B \setminus
E}$ with $\|\gamma\|_1 \le s$ such that
$$
\psi(e) = \sum_{b \in B \setminus E} \gamma_b \cdot \psi(b).
$$
\end{dfn}
Regarding $\psi$ as a $B\times A$ matrix, we see that local
correctability is actually a property of the space $W$ spanned by
the columns of $\psi$ and does not depend on the particular choice
of basis given by $\psi$. Still, it is convenient to define local
correctability this way since our usage for it will be with a
particular choice of basis. We say that a permutation $\pi\in S_B$
is a \emph{symmetry of $\psi$} if it is a symmetry of $W$. We show
that the assumptions that $\psi$ is bounded and has a transitive
symmetry group imply that it is locally correctable. We use the
notation $\|\psi\|_{\infty}:=\max_{a\in A, b\in B}|\psi(b)_a|$,
$\psi(\gamma):=\sum_{b\in B}\gamma_b\psi(b)$ for $\gamma\in\R^B$ and
$\psi(S)=\sum_{b\in S} \psi(b)$ for $S\subset B$.

\begin{lemma}\label{lemma:phi_is_locally_correctable}
Let $\psi:B \to \Z^A$ be such that $\|\psi\|_{\infty} \le c$ and
such that the symmetry group of $\psi$ acts transitively on $B$.
Then $\psi$ is $(\delta, s)$-locally correctable for some $s = O(|A|
\log(2c|A|))$ and $\delta = \frac{1}{8s}$.
\end{lemma}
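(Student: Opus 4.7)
Fix $E\subseteq B$ with $|E|\le\delta|B|$ and $e\in E$. The target is an integer vector $\gamma\in\Z^{B\setminus E}$ with $\psi(\gamma)=\psi(e)$ and $\|\gamma\|_1\le s$ for $s=O(|A|\log(2c|A|))$. The approach is probabilistic: I will construct a random $\gamma\in\Z^B$ of deterministic $\ell_1$-norm at most $s$, and then show via a multidimensional lattice local central limit theorem plus a simple union bound on support that with positive probability both $\mathrm{supp}(\gamma)\subseteq B\setminus E$ and $\psi(\gamma)=\psi(e)$.

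Concretely, set $s:=C|A|\log(2c|A|)$ for a large universal constant $C$. Sample $b_1,\dots,b_s\in B$ i.i.d.\ uniformly and signs $\varepsilon_1,\dots,\varepsilon_s\in\{-1,+1\}$ i.i.d.\ uniformly, all independent, and set $\gamma:=\sum_{i=1}^s\varepsilon_i\mathbf{1}_{b_i}\in\Z^B$ and $Y:=\psi(\gamma)=\sum_i\varepsilon_i\psi(b_i)$. Then $\|\gamma\|_1\le s$ holds deterministically, and $\mathrm{supp}(\gamma)\subseteq B\setminus E$ with probability $(1-|E|/|B|)^s\ge(1-\delta)^s\ge 7/8$ (using $s\delta\le 1/8$). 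For the event $\{Y=\psi(e)\}$ I condition on $b_1,\dots,b_s\in B\setminus E$ and apply the local CLT to the conditional walk: each coordinate of $Y$ is mean zero, bounded in absolute value by $c$, and the conditional covariance $\tfrac{s}{|B\setminus E|}\sum_{b\in B\setminus E}\psi(b)\psi(b)^t$ is comparable, by transitivity of $G$, to $\tfrac{s}{|B|}\phi^t\phi$ (removing a $\delta$-fraction of i.i.d.-like terms only perturbs the covariance by $O(\delta)$ in operator norm). The local CLT then yields $\Pr[Y=\psi(e)\mid\mathrm{supp}(\gamma)\subseteq B\setminus E]\ge(c\sqrt s)^{-O(|A|)}$, which is strictly positive. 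Combining with the support event gives $\Pr[\gamma\text{ is good}]>0$, so a good $\gamma$ exists.

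Transitivity of $G$ plays two essential roles here: (i) it is what guarantees $\psi(e)\in\LL(\psi|_{B\setminus E})$, a necessary condition for the local-CLT lower bound on $\Pr[Y=\psi(e)\mid\cdots]$ to be nontrivial; the key point is that for any $b\in E$, one can find $\pi\in G$ moving $b$ into $B\setminus E$, and averaging over such symmetries (using that $|E|\le\delta|B|$ is tiny) reconstructs $\psi(b)$ as an integer combination of $\{\psi(b'):b'\in B\setminus E\}$; (ii) it controls the conditional covariance so the local CLT constants do not deteriorate after removing $E$. The main obstacle is the quantitative local CLT step: one needs $\Pr[Y=\psi(e)\mid\cdots]\ge(c\sqrt s)^{-O(|A|)}$ after only $s=\Theta(|A|\log(c|A|))$ steps, which requires bounding the characteristic function of $Y$ both near zero (by a Taylor/Gaussian approximation) and far from zero (where $\|\psi\|_\infty\le c$ and transitivity prevent clustering of the walk on a proper sublattice). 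This is essentially a lightweight version of the Fourier-analytic local CLT developed in Subsection~\ref{sec:Fourier_analysis} for the main theorem, applied to the auxiliary walk $Y$, and the principal work of the proof is to carry out this Fourier argument with the stated parameter dependence.
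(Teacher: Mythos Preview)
Your approach is circular. The lemma you are trying to prove is precisely the ingredient that makes the Fourier-analytic local CLT in this paper work: Lemma~\ref{lemma:phi_FC_tame} (the tameness of the inner products $\ip{\phi(b),\theta}$) is derived \emph{from} Lemma~\ref{lemma:phi_is_locally_correctable}, and that tameness is what allows the paper to bound the characteristic function both near zero and, crucially, far from the dual lattice. If you now invoke ``a lightweight version'' of that same Fourier argument to establish the lemma, you need an independent bound on the characteristic function of your auxiliary walk $Y$ away from zero. The only hypotheses available are $\|\psi\|_\infty\le c$ and transitivity; there is no boundedness assumption on $V^\perp$ here, and you do not yet have local correctability. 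Without one of these, there is no mechanism ruling out that $\ip{\psi(b),\theta}$ is close to (but not exactly) an integer for almost all $b$, which is exactly the obstruction to a local CLT after only $s=O(|A|\log(2c|A|))$ steps in dimension $|A|$.

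Your point (i) also does not work as written. A symmetry $\pi$ gives $\psi(\pi(b))=\tau(\psi(b))$ for an invertible linear $\tau$ on $\Q^A$; this does \emph{not} express $\psi(b)$ as an integer combination of $\psi(b')$ for $b'\in B\setminus E$, and ``averaging over such symmetries'' does not help since the $\tau$'s vary with $\pi$. The paper's proof avoids all of this with an elementary pigeonhole argument: any subset $S\subset B$ of size $O(|A|\log(2c|A|))$ supports a vector $\gamma\in\{-1,0,1\}^S$ with $\psi(\gamma)=0$ and at least $|S|/4$ nonzero entries (because the number of possible values of $\psi(S')$ is at most $(2c|S|+1)^{|A|}$). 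One then picks a random $S$ and a random $b_0\in S$, uses the symmetry $\pi_{b_0}$ sending $b_0$ to $e$, and observes that the transported kernel relation $\sum_b\gamma_b\psi(\pi_{b_0}(b))=0$ expresses $\psi(e)$ in terms of the $\psi(\pi_{b_0}(b))$, which avoid $E$ with positive probability. No analytic estimate is needed.
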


We need the following auxiliary claim.

\begin{claim}\label{claim:phi_has_short_orth_vecs}
Let $\psi:B \to \Z^A$ be such that $\|\psi\|_{\infty} \le c$. Then
for any subset $S \subset B$ of size $|S| \ge O(|A| \log(2c |A|))$
there exists a vector $\gamma \in \{-1,0,1\}^S$ having at least
$\frac{1}{4}|S|$ non-zero coordinates and satisfying
$\psi(\gamma)=0$.
\end{claim}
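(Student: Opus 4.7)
The claim is a Siegel's-lemma-type statement, and I would prove it in two stages: a pigeonhole step to produce a \emph{single} nonzero $\pm 1/0$ relation on any sufficiently large subset, followed by an iteration to force many nonzero coordinates.

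\textbf{Step 1 (pigeonhole on any large subset).} Fix $T\subset B$ and look at the $2^{|T|}$ vectors in $\{0,1\}^T$. For each such $\eta$, the image $\psi(\eta)=\sum_{b\in T}\eta_b\psi(b)\in\Z^A$ has coordinates in the integer interval $[0,c|T|]$, so the number of possible images is at most $(c|T|+1)^{|A|}$. Whenever
\begin{equation*}
  2^{|T|}>(c|T|+1)^{|A|},
\end{equation*}
two distinct $\{0,1\}$-vectors $\eta_1,\eta_2$ have the same image, and $\gamma=\eta_1-\eta_2\in\{-1,0,1\}^T$ is a nonzero relation with $\psi(\gamma)=0$. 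A routine calculation shows this inequality is satisfied as soon as $|T|\ge s_0$ for some $s_0=O(|A|\log(2c|A|))$.

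\textbf{Step 2 (iteration to amplify the support).} Starting from $T_0=S$ and $\gamma^{(0)}=0\in\{-1,0,1\}^S$, apply Step 1 repeatedly: given $T_i$ with $|T_i|\ge s_0$, produce a nonzero $\eta^{(i+1)}\in\{-1,0,1\}^{T_i}$ with $\psi(\eta^{(i+1)})=0$, set $\gamma^{(i+1)}:=\gamma^{(i)}+\eta^{(i+1)}$ (zero-extended), and let $T_{i+1}:=T_i\setminus\supp(\eta^{(i+1)})$. The supports of the $\eta^{(i)}$ are pairwise disjoint by construction, so the partial sums stay in $\{-1,0,1\}^S$, and by linearity $\psi(\gamma^{(i)})=0$ throughout. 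The process terminates once $|T_k|<s_0$, at which point
\begin{equation*}
  |\supp(\gamma^{(k)})|=|S|-|T_k|>|S|-s_0.
\end{equation*}
Choosing the hidden constant in the hypothesis $|S|\ge O(|A|\log(2c|A|))$ large enough that $|S|\ge \tfrac{4}{3}s_0$ (e.g., $|S|\ge 2s_0$), the support has size at least $\tfrac{1}{4}|S|$, as required. The resulting $\gamma:=\gamma^{(k)}$ is the desired vector.

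\textbf{Main obstacle.} There is no serious obstacle; the only points to verify carefully are the asymptotics of the pigeonhole inequality $2^{|T|}>(c|T|+1)^{|A|}$ (which gives the right logarithmic dependence on $c$ and $|A|$) and the bookkeeping that disjoint supports allow the $\eta^{(i)}$ to be added without leaving $\{-1,0,1\}^S$. Note that this argument does not use the transitive symmetry hypothesis on $\psi$ from Lemma~\ref{lemma:phi_is_locally_correctable}; that hypothesis is invoked only at the subsequent stage, where Claim~\ref{claim:phi_has_short_orth_vecs} is converted into local correctability by using symmetries to shuffle a correcting relation away from a specified ``bad'' set $E$.
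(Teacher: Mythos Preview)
Your argument is correct and complete (modulo a harmless slip: since $\psi(b)_a$ may be negative, the image of a $\{0,1\}$-vector lies in $[-c|T|,c|T|]^A$, giving $(2c|T|+1)^{|A|}$ possible values rather than $(c|T|+1)^{|A|}$; this does not affect the threshold $s_0=O(|A|\log(2c|A|))$).

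Your route, however, differs from the paper's. The paper does \emph{not} iterate: it runs a single pigeonhole on all $2^{|S|}$ subsets of $S$, but strengthens the conclusion by throwing in a Hamming-ball volume bound. Concretely, it fixes $\alpha<1$ so that the number of binary strings of length $|S|$ with fewer than $\tfrac14|S|$ ones is at most $2^{\alpha|S|}$, and observes that if $2^{(1-\alpha)|S|}>(2c|S|+1)^{|A|}$ then some fibre $\psi^{-1}(v)$ contains more than $2^{\alpha|S|}$ subsets, hence two subsets $S_1,S_2$ with $|S_1\triangle S_2|\ge\tfrac14|S|$; then $\gamma=1_{S_1}-1_{S_2}$ works in one shot. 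Your approach is arguably more elementary (no entropy/volume input) and in fact delivers a stronger conclusion, support $\ge |S|-s_0$ rather than merely $\ge\tfrac14|S|$; the paper's approach has the advantage of being a single clean step. Either is perfectly adequate for the downstream use in Lemma~\ref{lemma:phi_is_locally_correctable}, and you are right that transitivity plays no role here.
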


\begin{proof}
The claim follows from the pigeon hole principle. Let $\alpha\in
(0,1)$ be such that for any $n\ge1$, the number of strings in
$\{0,1\}^{n}$ having less than $\frac{1}{4}n$ ones is at most
$2^{\alpha n}$. Fix $S\subset B$. For a subset $S' \subseteq S$, we
have that $\psi(S')$ is an $|A|$-dimensional integer vector with
entries bounded by $c |S|$ in absolute value and hence the total
number of distinct values for it is bounded by $(2c|S|+1)^{|A|}$.
The number of subsets of $S$ is $2^{|S|}$. Hence, if
\begin{equation}\label{eq:volume_condition}
2^{(1-\alpha)|S|} > (2c|S|+1)^{|A|}
\end{equation}
there must exist two distinct subsets $S_1,S_2$ such that
$\psi(S_1)=\psi(S_2)$ and $|S_1\bigtriangleup S_2|\ge\frac{1}{4}|S|$
(where we use $\bigtriangleup$ to denote symmetric difference). We
then set $\gamma = 1_{S_1} - 1_{S_2}$ and have that $\psi(\gamma)=0$
and $\gamma$ has at least $\frac{1}{4}|S|$ non-zero coordinates.
Now, it is a simple exercise to verify that the condition $|S| \ge
O(|A| \log(2c |A|))$ with a large enough hidden constant implies
\eqref{eq:volume_condition}.
\end{proof}

\begin{proof}[Proof of Lemma~\ref{lemma:phi_is_locally_correctable}]
Let $s=O(|A| \log(2c |A|))$ be an integer larger than the lower
bound on $|S|$ given by Claim~\ref{claim:phi_has_short_orth_vecs}.
Set $\delta=\frac{1}{8 s}$. Assume that $\delta|B|\ge 1$ since
otherwise the claim holds vacuously. Fix a subset $E \subset B$ of
size $1\le|E| \le \delta |B|$ and an element $e \in E$. For each
$b\in B$, let $\pi_b$ be a symmetry of $\psi$ satisfying $\pi_b(b) =
e$.

Choose $S$ uniformly among the subsets of size $s$ of $B$ and choose
$b_0$ uniformly in $S$. Let $\gamma\in\{-1,0,1\}^S$ be some (random)
vector having at least $\frac{1}{4}|S|$ non-zero coordinates and
satisfying $\psi(\gamma)=0$, whose existence is guaranteed by
Claim~\ref{claim:phi_has_short_orth_vecs}. Define the events
\begin{align*}
  \Omega_1&:=\{\gamma_{b_0}=0\},\\
  \Omega_2&:=\{\exists b\in S\setminus\{b_0\}\text{ satisfying }\pi_{b_0}(b)\in E\}.
\end{align*}
We first show that $\Omega_1^c\cap\Omega_2^c$ has positive
probability. Indeed, this follows from the fact that
$\Pr[\Omega_1]\le \frac{3}{4}$ by the properties of $\gamma$ and,
observing that conditioned on $b_0$ each $b\in S\setminus\{b_0\}$ is
uniformly distributed in $B\setminus\{b_0\}$,
\begin{equation*}
  \Pr[\Omega_2] \le s\frac{|E|}{|B|}\le s\delta= \frac{1}{8}.
\end{equation*}
Now, assume that $\Omega_1^c\cap\Omega_2^c$ occurred. We may assume
WLOG that $\gamma_{b_0}=1$ (since otherwise we may replace $\gamma$
by $-\gamma$). Thus
\begin{equation}\label{eq:phi_b_0_expr}
  \psi(b_0) = -\sum_{b\in S\setminus\{b_0\}} \gamma_b \psi(b).
\end{equation}
Since $\pi_{b_0}$ is a symmetry of $\psi$, there exists an
invertible linear map $\tau:\Q^A \to \Q^A$ such that
$\psi(\pi_{b_0}(b))=\tau(\psi(b))$ for all $b \in B$. Applying
$\tau$ to \eqref{eq:phi_b_0_expr} gives
$$
\psi(e) = \tau(\psi(b_0)) = -\sum_{b\in S\setminus\{b_0\}} \gamma_b
\tau(\psi(b)) = -\sum_{b\in S\setminus\{b_0\}} \gamma_b
\psi(\pi_{b_0}(b)).
$$
Finally, $\Omega_2^c$ implies that $\pi_{b_0}(b)\notin E$ for all
$b\in S\setminus \{b_0\}$ and the lemma follows.
\end{proof}

We now derive Lemma~\ref{lemma:phi_FC_tame}.

\begin{proof}[Proof of Lemma~\ref{lemma:phi_FC_tame}]
We have by Lemma~\ref{lemma:phi_is_locally_correctable} that $\phi$
is $(\delta,s)$-locally correctable with $s=O(|A| \log(2c_2 |A|))$
and $\delta=\frac{1}{8s}$. We will establish
Lemma~\ref{lemma:phi_FC_tame} with $M=s/\sqrt{\delta}$.

Let us first prove the first item. Let
$\beta:=\left(\frac{1}{|B|}\sum_{b \in B}
|\ip{\phi(b),\theta}|^2\right)^{1/2}$ and set $E:=\{b \in B:
|\ip{\phi(b),\theta}| \ge \frac{\beta}{\sqrt{\delta}}\}$. Then we
must have that $|E| \le \delta |B|$. If $E$ is empty we are done.
Otherwise, since $\phi$ is $(\delta,s)$-locally correctable, for any
$e \in E$ we can express $\phi(e)$ as $\phi(e) = \sum_{b \notin E}
\gamma_b \cdot \phi(b)$ where $\sum |\gamma_b| \le s$. Hence in
particular,
$$
|\ip{\phi(e),\theta}| = |\sum_{b \notin E} \gamma_b
\ip{\phi(b),\theta}| \le \sum_{b \notin E} |\gamma_b| \cdot
|\ip{\phi(b),\theta}| \le s \cdot \frac{\beta}{\sqrt{\delta}}.
$$
The second item is very similar. Let
$\beta:=\left(\frac{1}{|B|}\sum_{b \in B} |r_b|^2\right)^{1/2}$ and
set $E:=\{b \in B: |r_b| \ge \frac{\beta}{\sqrt{\delta}}\}$. Again,
$|E|\le\delta|B|$ and thus any $e \in E$ can be expressed as
$\phi(e) = \sum_{b \notin E} \gamma_b \cdot \phi(b)$ with
$\|\gamma\|_1\le s$. Hence
$$
\ip{\phi(e),\theta} = \sum_{b \notin E} \gamma_b \ip{\phi(b),\theta}
= \sum_{b \notin E} \gamma_b (n_b+r_b),
$$
and in particular $r_e = \sum_{b \notin E} \gamma_b r_b \pmod{1}$
(where we mean that the modulo 1 maps $\R$ to $[-1/2, 1/2)$). Hence
\begin{equation*}
|r_e| \le \sum_{b \notin E} |\gamma_b| |r_b| \le s \cdot
\frac{\beta}{\sqrt{\delta}}.\qedhere
\end{equation*}
\end{proof}

\subsection{Estimating the Fourier transform near zero}
\label{subsec:fourier_near_zero}

We prove Lemma~\ref{lemma:estimate_fourier_near_zero} in this
subsection. Let $\theta \in \B_{R}(\eps) \subset \R^A$. Recall that
by Claim~\ref{cl:structure_fourier_X} we have that
$$
\hat{X}(\theta) = \prod_{b \in B} \left(1-p + p \cdot e^{2\pi i \ip{\phi(b),\theta}}\right).
$$
Let us shorthand $x_b = 2 \pi \ip{\phi(b),\theta}$. By our
assumptions that $\theta\in\B_R(\eps)$ and
Lemma~\ref{lemma:phi_FC_tame} we have
\begin{equation}\label{eq:x_b_bound}
  \max_{b\in B}|x_b| = 2\pi\max_{b\in B}|\ip{\phi(b),\theta}| \le 2\pi M \|\theta\|_R \le
  \frac{\pi}{4},
\end{equation}
where the last inequality follows from the assumption that
$\|\theta\|_R\le \eps\le\frac{1}{8M}$. Define the function $f:\R \to
\C$ given by $f(x):=1-p + p e^{ix}$. Then
\begin{equation}\label{eq:fourier_decompose_with_f}
\hat{X}(\theta) = \prod_{b \in B} f(x_b).
\end{equation}
\begin{claim} \label{claim:approx_f}
If $0 \le p \le 1$ and $|x|\le \frac{\pi}{4}$ then $f(x) =
\exp(ipx-\frac{1}{2}p(1-p) x^2+\delta(x))$ where $|\delta(x)| = O(p
|x|^3)$.
\end{claim}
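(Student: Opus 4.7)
The plan is to take the complex logarithm of $f(x) = 1 - p + pe^{ix}$ and expand it as a Taylor series around $x=0$, tracking carefully how the error depends on $p$.

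First, I would rewrite $f(x) = 1 + w$ where $w := p(e^{ix}-1)$. Since $|e^{ix}-1| \le |x|$ for real $x$, the hypothesis $|x| \le \pi/4$ gives $|w| \le p|x| \le \pi/4 < 1$, so $\log(1+w)$ is well-defined via its convergent series and satisfies $\log(1+w) = w - w^2/2 + R_2(w)$ with $|R_2(w)| \le |w|^3/(1-|w|) = O(|w|^3)$. Second, I would use the Taylor expansion $e^{ix}-1 = ix - x^2/2 + R_1(x)$ with $|R_1(x)| = O(|x|^3)$ uniformly on $|x|\le\pi/4$, so that
\[
w = ipx - \tfrac{p x^2}{2} + p R_1(x), \qquad w^2 = -p^2 x^2 + p^2 R_3(x)
\]
where $R_3(x) = O(|x|^3)$ uniformly on $|x|\le\pi/4$ (obtained by squaring the expansion of $e^{ix}-1$).

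Substituting, I would get
\[
\log f(x) = ipx - \tfrac{p x^2}{2} + \tfrac{p^2 x^2}{2} + \delta(x) = ipx - \tfrac{p(1-p)x^2}{2} + \delta(x),
\]
where $\delta(x) = pR_1(x) - \tfrac{1}{2}p^2 R_3(x) + R_2(w)$. Each piece is bounded by $O(p|x|^3)$: the first directly; the second because $p^2 \le p$ (since $p\in[0,1]$); the third because $|R_2(w)| = O(|w|^3) = O(p^3|x|^3) \le O(p|x|^3)$, again using $p\le 1$. Exponentiating then yields the desired formula $f(x) = \exp(ipx - \tfrac12 p(1-p)x^2 + \delta(x))$ with $|\delta(x)| = O(p|x|^3)$.

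There is no real obstacle here; the only thing to be careful about is that the claim demands the error be \emph{linear} in $p$, not merely polynomial. This is automatic once one notes that every error term naturally comes out as $p^k|x|^3$ with $k\ge 1$, and these are all dominated by $p|x|^3$ since $p\le 1$. The exact cancellation that produces the coefficient $p(1-p)$ (rather than $p$) occurs between the quadratic part of $w$ and the quadratic part of $-w^2/2$, and is the only nontrivial algebraic point in the argument.
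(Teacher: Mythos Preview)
Your proof is correct and follows essentially the same approach as the paper: write $f(x)=1+w$ with $w=p(e^{ix}-1)$, bound $|w|<1$, Taylor-expand $\log(1+w)$ to second order and $e^{ix}-1$ to second order, and combine. The paper's version is terser (it bounds $|e^{ix}-1|\le\sqrt{2-\sqrt{2}}$ directly rather than by $|x|$, and does not name the remainders), but the argument is the same.
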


\begin{proof}
Let $y=p(e^{ix}-1)$ so that $f(x)=1+y$. Our assumptions imply that
$|y|\le |e^{ix}-1|\le \sqrt{2-\sqrt{2}}<1$ so that
$\log(1+y)=y-y^2/2+O(|y|^3)$. Now, $y=ipx - px^2/2 + O(p|x|^3)$ and
$y^2=-p^2 x^2 + O(p^2|x|^3)$. Hence
\begin{equation*}
\log(f(x)) = ipx-\frac{1}{2}p(1-p)x^2+O(p|x|^3).\qedhere
\end{equation*}
\end{proof}
Applying Claim~\ref{claim:approx_f} to each term
in~\eqref{eq:fourier_decompose_with_f} we obtain
\begin{align}\label{eq:estimate_fourier_approx}
\hat{X}(\theta) &= \exp(2 \pi i \cdot p \cdot \sum_{b \in B} \ip{\phi(b),\theta}- 4 \pi^2 \cdot \frac{1}{2} p(1-p) \cdot \sum_{b \in B} \ip{\phi(b),\theta}^2 + \delta(\theta)) \nonumber \\
&= \exp(2 \pi i \cdot \ip{\Ex[X],\theta}- 2 \pi^2 \cdot \theta
^t\Sigma[X] \theta + \delta(\theta))
\end{align}
where we recall that $\Ex[X]=p \sum_{b \in B} \phi(b)$ and that
$\Sigma[X]_{a,a'}=p(1-p) \sum_{b \in B} \phi(b)_a \phi(b)_{a'}$, for
$a,a'\in A$. The error term is bounded by
\begin{equation}
|\delta(\theta)| = O(p \sum_{b \in B} |\ip{\phi(b),\theta}|^3).
\end{equation}
Applying \eqref{eq:x_b_bound} again yields
\begin{equation*}
  \sum_{b \in B} |\ip{\phi(b),\theta}|^3 \le \max_{b\in
B}|\ip{\phi(b),\theta}| \cdot \sum_{b \in B} |\ip{\phi(b),\theta}|^2
\le M\|\theta\|_R \cdot \|\theta\|_R^2 |B| = M\|\theta\|_R^3|B|,
\end{equation*}
and since $p|B|=N$ we can bound the error term by
$$
|\delta(\theta)| \le O(M \|\theta\|_R^3 N).
$$

\subsection{Bounding the Fourier transform far from $L$}
\label{subsec:fourier_far_L} We next bound the Fourier transform far
from $0$ in the $R$-norm, proving
Lemma~\ref{lemma:estimate_fourier_far_L}. Fix $\eps>0$ and
$\theta\in D\setminus\B_R(\eps)$. Our goal is to show that
$|\hat{X}(\theta)|$ must be small. Let us decompose
$\ip{\phi(b),\theta}=n_b+r_b$ where $n_b \in \Z$ and $r_b \in
[-1/2,1/2)$. Recall that by Claim~\ref{cl:structure_fourier_X} we
have that
\begin{equation}
\hat{X}(\theta) = \prod_{b \in B} \left(1-p + p \cdot e^{2\pi i
\ip{\phi(b),\theta}}\right) = \prod_{b \in B} \left(1-p + p \cdot
e^{2\pi i \cdot r_b}\right).
\end{equation}
We need two auxiliary claims.
\begin{claim}\label{claim:beta_lower_bound_by_eps}
$|\hat{X}(\theta)| \le \exp(-\frac{N}{|B|} \sum_{b \in B} r_b^2)\le
\exp(-\frac{N}{M^2}\max_{b\in B}
  r_b^2)$, where $M$ is defined in
  \eqref{eq:M_def}.
\end{claim}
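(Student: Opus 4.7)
The claim has two inequalities, and each reduces to a short, concrete computation, so the plan is simply to identify the right elementary bound for a single Fourier factor and then invoke the tameness lemma from Subsection~\ref{subsec:tame}.

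For the first inequality, I would start from the product formula
\[
\hat{X}(\theta) = \prod_{b \in B} \bigl(1-p + p\, e^{2\pi i r_b}\bigr)
\]
from Claim~\ref{cl:structure_fourier_X} (using that $e^{2\pi i n_b}=1$), and bound each factor in modulus. Expanding,
\[
\bigl|1-p+p e^{2\pi i r_b}\bigr|^2 = 1 - 2p(1-p)\bigl(1-\cos(2\pi r_b)\bigr).
\]
Since $|r_b|\le 1/2$, we have $|2\pi r_b|\le \pi$, so the standard inequality $1-\cos y \ge 2 y^2/\pi^2$ for $|y|\le \pi$ gives $1-\cos(2\pi r_b)\ge 8 r_b^2$. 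Combining with $1-x\le e^{-x}$ yields
\[
\bigl|1-p+p e^{2\pi i r_b}\bigr| \le \exp\bigl(-8 p(1-p) r_b^2\bigr).
\]
Taking the product over $b\in B$, and using the standing assumption $p\le 1/2$ (so $1-p\ge 1/2$ and $8p(1-p)\ge 4p \ge p$), one obtains
\[
|\hat{X}(\theta)| \le \exp\Bigl(-8 p(1-p)\sum_{b\in B} r_b^2\Bigr) \le \exp\Bigl(-p \sum_{b\in B} r_b^2\Bigr) = \exp\Bigl(-\tfrac{N}{|B|}\sum_{b\in B} r_b^2\Bigr),
\]
which is the first inequality.

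For the second inequality, I would apply part~(2) of Lemma~\ref{lemma:phi_FC_tame} to the decomposition $\langle \phi(b),\theta\rangle = n_b + r_b$. Squaring its conclusion gives
\[
\max_{b\in B} r_b^2 \le \frac{M^2}{|B|}\sum_{b\in B} r_b^2,
\]
i.e., $\sum_b r_b^2 \ge \tfrac{|B|}{M^2}\max_b r_b^2$. Substituting this into the first inequality yields
\[
\exp\Bigl(-\tfrac{N}{|B|}\sum_{b\in B} r_b^2\Bigr) \le \exp\Bigl(-\tfrac{N}{M^2}\max_{b\in B} r_b^2\Bigr),
\]
which completes the proof.

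There is no real obstacle: the first inequality is a one-variable trigonometric estimate applied factor by factor, and the second is a direct invocation of the tameness lemma already proved in Subsection~\ref{subsec:tame}. The only mild care needed is to verify the trig bound on the correct range (which is exactly the range guaranteed by the convention $r_b\in[-1/2,1/2)$) and to track that the assumption $p\le 1/2$ absorbs the loss between $p$ and $p(1-p)$.
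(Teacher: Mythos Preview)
Your proposal is correct and follows essentially the same approach as the paper: bound each factor $|1-p+pe^{2\pi i r_b}|$ by $\exp(-p r_b^2)$ (the paper states this as ``simple to verify'' for $|r_b|\le 1/2$, $p\le 1/2$, while you actually carry out the verification), multiply over $b$, and then invoke part~(2) of Lemma~\ref{lemma:phi_FC_tame} for the second inequality.
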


\begin{proof}
It is simple to verify that for any $|x| \le 1/2$ and $p \le 1/2$,
$$
|1-p + p e^{2 \pi i x}| \le \exp(-p x^2).
$$
Hence
\begin{equation*}
|\hat{X}(\theta)| \le \exp(-p \sum_{b \in B} r_b^2) = \exp(-
\frac{N}{|B|} \sum_{b \in B} r_b^2).
\end{equation*}
The second inequality now follows from the second part of
Lemma~\ref{lemma:phi_FC_tame}.
\end{proof}

Recall that $c_3$ is the constant from our assumption that $V^\perp$
has a bounded integer basis in $\ell_1$. We next argue that if
$\max_{b\in B}|r_b|<\frac{1}{c_3}$ then the vector $(n_b)_{b \in B}$
belongs to the space $V$.

\begin{claim}\label{claim:decode_nb}
If $\max_{b\in B}|r_b| < 1/c_3$ then there exists $\alpha \in \Q^A$
such that $\ip{\phi(b),\alpha}=n_b$ for all $b \in B$.
\end{claim}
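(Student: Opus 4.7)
The plan is to show that the integer vector $n=(n_b)_{b\in B}$ lies in $V$ (regarded as a subspace of $\Q^B$ via the usual inclusion of functions on $B$), since then the basis property of the $(\phi_a)$, $a\in A$, immediately yields the desired $\alpha\in\Q^A$ with $\ip{\phi(b),\alpha}=n_b$ for all $b\in B$. To prove that $n\in V$, I will verify that $n$ is orthogonal to $V^\perp$, exploiting the bounded integer basis of $V^\perp$ provided by the boundedness hypothesis.

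First, I would assemble the identity $\phi\theta = n + r$, where $\phi\theta\in\Q^B$ denotes the vector with $b$-th coordinate $\ip{\phi(b),\theta}$ and $r=(r_b)_{b\in B}$. By definition $\phi\theta\in V$. Now let $\delta\in V^\perp\cap\Z^B$ be any integer vector with $\|\delta\|_1\le c_3$; by the boundedness assumption, such vectors span $V^\perp$. Then
\begin{equation*}
\ip{\delta,n} \;=\; \ip{\delta,\phi\theta}-\ip{\delta,r} \;=\; -\ip{\delta,r},
\end{equation*}
because $\delta\perp V$ and $\phi\theta\in V$.

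The key observation is the size comparison: $\ip{\delta,n}\in\Z$ since both $\delta$ and $n$ are integer vectors, whereas
\begin{equation*}
|\ip{\delta,r}| \;\le\; \|\delta\|_1\cdot\max_{b\in B}|r_b| \;<\; c_3\cdot\frac{1}{c_3} \;=\; 1,
\end{equation*}
using the hypothesis $\max_b|r_b|<1/c_3$. Since an integer of absolute value less than $1$ must vanish, we get $\ip{\delta,n}=0$ for every integer $\delta\in V^\perp$ with $\|\delta\|_1\le c_3$, and hence for every $\delta\in V^\perp$ by linearity and the spanning property. This gives $n\in (V^\perp)^\perp = V$.

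There is no substantial obstacle: the argument is just a clean application of the fact that a small real number that is integer-valued must be zero. The only place to be a bit careful is to make sure that we use precisely the right boundedness assumption (namely the $\ell_1$ bound on $V^\perp$, which is what allows $|\ip{\delta,r}|<1$ to be deduced from the $\ell_\infty$-type bound $\max_b|r_b|<1/c_3$) and that $V^\perp$ is genuinely spanned by such short integer vectors.
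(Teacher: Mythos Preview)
Your proof is correct and follows essentially the same approach as the paper: both arguments show that the integer vector $(n_b)_{b\in B}$ lies in $V$ by pairing it against the short integer spanning vectors of $V^{\perp}$ and using that an integer of absolute value less than $1$ must vanish. The only cosmetic difference is that the paper phrases this by contradiction (assume $(n_b)\notin V$, find a short integer $\gamma\in V^{\perp}$ with $\sum_b\gamma_b n_b\neq 0$, derive $|\sum_b\gamma_b r_b|\ge 1$, contradiction), whereas you argue directly; the underlying idea is identical.
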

We remark that this is the only place in our proof where the
assumption that $V^\perp$ has a bounded integer basis in $\ell_1$ is
used.

\begin{proof}[Proof of Claim~\ref{claim:decode_nb}]
Assume to the contrary that no such $\alpha$ exists. Then $(n_b)_{b
\in B}$ does not belong to $V$ and hence it must violate some
constraint of $V^{\perp}$. However, by assumption, $V^{\perp}$ is
spanned by integer vectors of $\ell_1$ norm at most $c_3$. Hence,
there exists $\gamma \in \Z^B$, $\|\gamma\|_1 \le c_3$ such that
$$
\sum_{b \in B} \gamma_b n_b \ne 0.
$$
Since both $\gamma$ and $(n_b)$ are integer vectors, we must have that
$$
|\sum_{b \in B} \gamma_b n_b| \ge 1.
$$
However, we know that the vector $(n_b+r_b)_{b \in B}$ belongs to
$V$ (more precisely, to the span over $\R$ of the vectors in $V$).
Hence
$$
\sum_{b \in B} \gamma_b (n_b+r_b) = 0.
$$
Thus we conclude that
$$
|\sum_{b \in B} \gamma_b r_b| \ge 1.
$$
This is, however, impossible if $|r_b| < 1/c_3$ for all $b \in B$.
\end{proof}

We now conclude the proof of
Lemma~\ref{lemma:estimate_fourier_far_L}. There are two cases to
consider. Suppose first that $\max_{b\in B} |r_b|\ge \frac{1}{c_3}$.
Then Claim~\ref{claim:beta_lower_bound_by_eps} implies that
\begin{equation}\label{eq:hat_X_bound_first_case}
  |\hat{X}(\theta)|\le \exp\left(-\frac{N}{M^2c_3^2}\right)\quad\text{if $\max_{b\in B}
  |r_b|\ge\frac{1}{c_3}$}.
\end{equation}
Now assume instead that $\max_{b\in B} |r_b|<\frac{1}{c_3}$ and let
$\alpha \in \Q^A$ be as in Claim~\ref{claim:decode_nb}. By
definition, $\alpha$ belongs to the lattice $L$. It follows that
\begin{equation*}
  \|\theta-\alpha\|_R^2 = \frac{1}{|B|} \sum_{b\in B} \ip{\phi(b),\theta-\alpha}^2 =
  \frac{1}{|B|} \sum_{b\in B} r_b^2.
\end{equation*}
Thus, by the definition \eqref{eq:Voronoi_cell} of the Voronoi cell
$D$ and the fact that $\theta\in D$ and $\alpha\in L$, we deduce
that
\begin{equation*}
  \|\theta\|_R^2 \le \frac{1}{|B|} \sum_{b\in B} r_b^2.
\end{equation*}
Since $\theta\notin\B_R(\eps)$ we also have $\|\theta\|_R> \eps$ and
thus Claim~\ref{claim:beta_lower_bound_by_eps} shows that
\begin{equation}\label{eq:hat_X_bound_second_case}
  |\hat{X}(\theta)|\le \exp(-N\eps^2)\quad\text{if $\max_{b\in B}
  |r_b|<\frac{1}{c_3}$}.
\end{equation}
Taken together, \eqref{eq:hat_X_bound_first_case} and
\eqref{eq:hat_X_bound_second_case} prove
Lemma~\ref{lemma:estimate_fourier_far_L}.

\subsection{Proof of central limit theorem from auxiliary lemmas}
\label{sec:proof_of_main_thm_from_lemmas}

In this section we prove Theorem~\ref{thm:LCLT_with_basis}. We start
with a bound on the in-radius of $D$ in the $R$-norm.
\begin{claim}\label{claim:in_radius_D}
  If $\eps<\frac{1}{2M}$ then $\B_R(\eps)\subset D$.
\end{claim}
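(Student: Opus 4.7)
The plan is to show, for any $\theta \in \B_R(\eps)$ and any nonzero $\alpha \in L$, the strict inequality $\|\theta\|_R < \|\theta-\alpha\|_R$. By the triangle inequality $\|\theta-\alpha\|_R \ge \|\alpha\|_R - \|\theta\|_R$, so it suffices to prove the lattice spacing bound $\|\alpha\|_R \ge 1/M$ for every nonzero $\alpha\in L$. Indeed, combined with $\|\theta\|_R \le \eps < 1/(2M)$, this gives $\|\theta-\alpha\|_R \ge 1/M - \eps > \eps \ge \|\theta\|_R$, placing $\theta$ strictly closer to $0$ than to $\alpha$.

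The spacing bound is where the two key ingredients enter. First, by definition of the dual lattice, $\ip{\phi(b),\alpha}\in \Z$ for every $b\in B$ (because each $\phi(b)$ lies in $\LL(\phi)$). Since $\LL(\phi)$ has full rank in $\Q^A$, the vectors $\{\phi(b)\}_{b\in B}$ span $\R^A$, so $\alpha\neq 0$ forces some $b_0\in B$ with $\ip{\phi(b_0),\alpha}\neq 0$; being an integer, $|\ip{\phi(b_0),\alpha}|\ge 1$. Second, applying Lemma~\ref{lemma:phi_FC_tame}(1) with $\theta=\alpha$ yields
\begin{equation*}
1 \le \max_{b\in B} |\ip{\phi(b),\alpha}| \le M\,\|\alpha\|_R,
\end{equation*}
so $\|\alpha\|_R \ge 1/M$ as required.

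There is no real obstacle here; once Lemma~\ref{lemma:phi_FC_tame} is in hand, this claim is essentially a one-line consequence. The only subtle point worth checking is that the $R$-norm is an honest norm on $\R^A$, which follows from the positive definiteness of $R=\phi^t\phi$ (noted in the paragraph following \eqref{eq:Sigma_X_def}), so the triangle inequality and Cauchy--Schwarz steps are legitimate. Once that is observed, the proof is immediate.
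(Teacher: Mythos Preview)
Your proof is correct and follows essentially the same approach as the paper: both establish the lattice spacing bound $\|\alpha\|_R \ge 1/M$ for every nonzero $\alpha\in L$ via Lemma~\ref{lemma:phi_FC_tame}(1), and then deduce that $\B_R(\eps)\subset D$ whenever $\eps<\frac{1}{2M}$. You are simply a bit more explicit than the paper in spelling out the triangle-inequality step that converts the spacing bound into the Voronoi containment.
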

\begin{proof}
Let $0\neq\alpha\in L$. By definition, $\ip{\phi(b), \alpha}\in\Z$
for all $b\in B$. Since $\LL(\phi)$ is of full rank and $\alpha\neq
0$, there exists some $b\in B$ for which $|\ip{\phi(b), \alpha}|\ge
1$. It follows from Lemma~\ref{lemma:phi_FC_tame} that
$\|\alpha\|_R\ge \frac{1}{M}$. Since $\alpha$ is arbitrary, we
deduce from the definition~\eqref{eq:Voronoi_cell} of $D$ that
$\B_R(\eps)\subset D$ for any $\eps<\frac{1}{2M}$.
\end{proof}
Now fix $\lambda\in\LL(\phi)$ and recall from
Fact~\ref{fact:Fourier_inversion} that
\begin{equation}\label{eq:Fourier_inversion_X}
\Pr[X=\lambda] = \det(\LL(\phi))\int_D \hat{X}(\theta)
    e^{-2 \pi i \ip{\lambda,\theta}} d
    \theta\quad\forall\lambda\in\LL(\phi).
\end{equation}
Introduce a second random vector $Y\in\R^A$ having the Gaussian
distribution with mean $\Ex[X]$ and covariance matrix $\Sigma[X]$
(that is, with the same mean and covariance matrix as $X$). Recall
that the density function $f_Y$ of $Y$ equals
\begin{equation}\label{eq:Gaussian_density}
  f_Y(x):=\frac{\exp(-\frac{1}{2}(x-\Ex[X])^t\Sigma[X]^{-1}(x-\Ex[X]))}{(2\pi)^{\frac{|A|}{2}}\sqrt{\det{\Sigma[X]}}},
\end{equation}
and that the Fourier transform of $Y$ equals
\begin{equation}\label{eq:Gaussian_FT}
  \hat{Y}(\theta):=\Ex[e^{2\pi i\ip{Y,\theta}}] = e^{2\pi
  i\ip{\Ex[X],\theta}-2\pi^2\theta^t\Sigma[X]\theta}.
\end{equation}
Moreover, the Fourier inversion formula applied to $Y$ yields
\begin{equation}\label{eq:Fourier_inversion_Y}
  f_Y(x) = \int_{R^A} \hat{Y}(\theta)e^{-2\pi
  i\ip{x,\theta}}d\theta \quad\forall x\in\R^A.
\end{equation}
Theorem~\ref{thm:LCLT_with_basis} will follow by showing that
$\Pr[X=\lambda]$ approximately equals $\det(\LL(\phi))f_Y(\lambda)$.
Fix $0<\eps<\frac{1}{2M}$ whose exact value will be chosen later
(see \eqref{eq:eps_choice} and \eqref{eq:N_assumption}). Combining
\eqref{eq:Fourier_inversion_X}, \eqref{eq:Fourier_inversion_Y} and
Claim~\ref{claim:in_radius_D} we may write
\begin{align}\label{eq:Gaussian_approx}
  |\Pr[X=\lambda] &- \det(\LL(\phi))f_Y(\lambda)|\le\nonumber\\
  &\det(\LL(\phi))\Big(\underbrace{\int_{\B_R(\eps)}|\hat{X}(\theta)-\hat{Y}(\theta)|d\theta}_{=:I_1}
  + \underbrace{\int_{D\setminus\B_R(\eps)}|\hat{X}(\theta)|d\theta}_{=:I_2} +
  \underbrace{\int_{\R^A\setminus\B_R(\eps)}|\hat{Y}(\theta)|d\theta}_{=:I_3}\Big).
\end{align}
Our next lemma provides upper bounds on each of the above integrals.
\begin{lemma}\label{lemma:integral_bounds}
There exists a universal constant $c>0$ such that:
\begin{enumerate}
\item If $\eps\le \frac{1}{8M}$ and $M\eps^3 N\le c$ then
  \begin{equation*}
    I_1\le \frac{M|A|^{3/2}}{2\sqrt{N}(2\pi)^{\frac{|A|}{2}}\sqrt{\det(\Sigma[X])}}.
  \end{equation*}
  \item If $\eps\le\frac{1}{c_3 M}$ then
  \begin{equation*}
    I_2\le \frac{e^{-\eps^2 N}}{\det(\LL(\phi))}.
  \end{equation*}
  \item If $\eps^2 N\ge \frac{2|A|}{\pi^2}$ then
  \begin{equation*}
    I_3\le \frac{e^{-\frac{1}{4}\pi^2\eps^2
    N}}{(2\pi)^{\frac{|A|}{2}}\sqrt{\det(\Sigma[X])}}.
  \end{equation*}
  \end{enumerate}
\end{lemma}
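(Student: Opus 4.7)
The plan is to treat the three integrals separately, with the common technical device being the change of variables $u = 2\pi\Sigma[X]^{1/2}\theta$, which turns $\hat{Y}(\theta) = e^{2\pi i\langle \Ex[X],\theta\rangle - 2\pi^2\theta^t\Sigma[X]\theta}$ into a standard Gaussian in $u$-coordinates and produces the Jacobian factor $(2\pi)^{|A|/2}\sqrt{\det\Sigma[X]}$ that appears in the denominators of the claimed bounds. A second common ingredient is the relation $\theta^t\Sigma[X]\theta = p(1-p)|B|\,\|\theta\|_R^2 = (1-p)N\|\theta\|_R^2$; under the simplifying assumption $p\le \tfrac12$ this gives $\theta^t\Sigma[X]\theta \ge \tfrac{N}{2}\|\theta\|_R^2$, which converts estimates in the $R$-norm into estimates in the natural Gaussian norm.

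For $I_1$, I would invoke Lemma~\ref{lemma:estimate_fourier_near_zero} to write $\hat{X}(\theta)=\hat{Y}(\theta)e^{\delta(\theta)}$ on $\B_R(\eps)$, with $|\delta(\theta)|=O(M\|\theta\|_R^3 N)$. Choosing the constant in $M\eps^3 N\le c$ small enough forces $|\delta|\le 1$, so $|e^{\delta}-1|\le 2|\delta|$ and hence
\[
I_1 \le CMN\int_{\R^A} \|\theta\|_R^3\, e^{-2\pi^2\theta^t\Sigma[X]\theta}\,d\theta.
\]
Bounding $\|\theta\|_R^3$ by $(2/N)^{3/2}(\theta^t\Sigma[X]\theta)^{3/2}$ and changing variables reduces the integral to a Gaussian third absolute moment $\int|u|^3 e^{-|u|^2/2}du = (2\pi)^{|A|/2}\Ex|U|^3$ with $U$ standard Gaussian in $\R^{|A|}$; by Jensen, $\Ex|U|^3 \le (\Ex|U|^4)^{3/4} = O(|A|^{3/2})$. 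Collecting factors and dividing by the Jacobian gives the stated bound.

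For $I_2$, the assumption $\eps \le 1/(c_3 M)$ makes $\beta(\eps)=\eps$ in Lemma~\ref{lemma:estimate_fourier_far_L}, so $|\hat{X}(\theta)|\le e^{-\eps^2 N}$ uniformly on $D\setminus\B_R(\eps)$. Then $I_2\le e^{-\eps^2 N}\vol(D)$, and we use the already-recorded identity $\vol(D)=\det(L)=1/\det(\LL(\phi))$. For $I_3$, we have $|\hat{Y}(\theta)| = e^{-2\pi^2\theta^t\Sigma[X]\theta}$; after the change of variables, the region $\R^A\setminus\B_R(\eps)$ is contained in $\{|u|\ge r\}$ with $r^2 \ge 2\pi^2 N\eps^2$. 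Splitting $e^{-|u|^2/2}=e^{-|u|^2/4}e^{-|u|^2/4}$ and using $e^{-|u|^2/4}\le e^{-r^2/4}$ on the tail yields $\int_{|u|\ge r} e^{-|u|^2/2}du \le (4\pi)^{|A|/2} e^{-r^2/4}$, producing a stray $2^{|A|/2}$ relative to the $(2\pi)^{|A|/2}$ in the denominator; the hypothesis $\eps^2 N\ge 2|A|/\pi^2$ is exactly what is needed to swap half of the exponent for this $2^{|A|/2}$ factor, leaving $e^{-\pi^2\eps^2 N/4}$ in the numerator.

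None of the three estimates is genuinely difficult; the main obstacle is bookkeeping the constants so that $\B_R(\eps)\subset D$ (which requires $\eps<1/(2M)$, already supplied by Claim~\ref{claim:in_radius_D}), the Taylor-type expansion is valid (requiring $M\eps^3 N\le c$), the $I_2$ bound uses $\beta=\eps$ (requiring $\eps\le 1/(c_3 M)$), and the Gaussian tail dominates the dimensional factor (requiring $\eps^2 N\ge 2|A|/\pi^2$). These are precisely the four assumptions listed in the three parts of the lemma, so matching the hypotheses to the inequalities they are used to establish is the only real content beyond routine calculus.
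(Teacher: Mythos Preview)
Your proposal is correct and follows essentially the same approach as the paper. The only cosmetic difference is that the paper phrases the change of variables probabilistically (introducing a standard Gaussian $G$ and the rescaled vector $Z=\tfrac{1}{2\pi}\Sigma[X]^{-1/2}G$, then invoking Markov's inequality with the moment generating function for $I_3$), whereas you do the same computation via the direct substitution $u=2\pi\Sigma[X]^{1/2}\theta$ and the splitting $e^{-|u|^2/2}=e^{-|u|^2/4}e^{-|u|^2/4}$; these are the same calculation, and your treatment of $I_1$ and $I_2$ matches the paper's exactly.
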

\begin{proof}
  We start with the second item. By
  Lemma~\ref{lemma:estimate_fourier_far_L}, if $\eps\le \frac{1}{c_3
  M}$ and $\theta\in D\setminus\B_R(\eps)$ then
  $|\hat{X}(\theta)|\le \exp(-\eps^2 N)$. Hence,
  \begin{equation*}
    I_2 \le \vol(D)e^{-\eps^2 N} = \frac{e^{-\eps^2
    N}}{\det(\LL(\phi))}.
  \end{equation*}
  We continue with the third item. By \eqref{eq:Gaussian_FT} we have
  \begin{equation}\label{eq:I_3_value}
    I_3=\int_{\R^A\setminus\B_R(\eps)}
    e^{-2\pi^2\theta^t\Sigma[X]\theta}d\theta.
  \end{equation}
To evaluate the integral let $G$ be a standard multivariate Gaussian
random vector in $\R^A$ (with mean zero and identity covariance
matrix). Recalling that $\Sigma[X]$ is a positive definite matrix,
let $\Sigma[X]^{-1/2}$ be a symmetric positive definite matrix such
that $(\Sigma[X]^{-1/2})^2 = \Sigma[X]^{-1}$. It follows that
\begin{equation*}
Z:=\frac{1}{2\pi}\Sigma[X]^{-1/2}G
\end{equation*}
has a Gaussian distribution with mean zero and covariance matrix
$\frac{1}{4\pi^2}\Sigma[X]^{-1}$. Thus the density function of $Z$
is
\begin{equation}\label{eq:Z_density}
  f_Z(\theta) =
  (2\pi)^{\frac{|A|}{2}}\sqrt{\det(\Sigma)}e^{-2\pi^2\theta^t\Sigma[X]\theta}.
\end{equation}
Comparing with \eqref{eq:I_3_value} yields
\begin{align}\label{eq:I_3_to_Gaussian}
  I_3 =
  (2\pi)^{-\frac{|A|}{2}}\det(\Sigma)^{-\frac{1}{2}}\Pr[\|Z\|_R>\eps] \le
  (2\pi)^{-\frac{|A|}{2}}\det(\Sigma)^{-\frac{1}{2}}\Pr[\|G\|_2^2>2\pi^2
  \eps^2 N],
\end{align}
where in the last inequality we used that
\begin{equation}\label{eq:Z_to_R}
\|Z\|_R^2 = \frac{1}{|B|}Z^tRZ =
\frac{1}{4\pi^2|B|}G^t\Sigma[X]^{-1/2}R\Sigma[X]^{-1/2}G =
\frac{\|G\|_2^2}{4\pi^2 p(1-p)|B|}\le \frac{\|G\|_2^2}{2\pi^2N},
\end{equation}
recalling that $p|B|=N$ and our standing assumption that $p\le
\frac{1}{2}$. Now, The distribution of $\|G\|_2^2$ is chi-squared
with $|A|$ degrees of freedom. Observing that $\Ex e^{t\|G\|_2^2} =
(1-2t)^{-|A|/2}$ for $t<1/2$ and fixing $t=1/4$, Markov's inequality
yields for any $\rho\ge 4|A|$ that
$$
  \Pr[\|G\|_2^2 > \rho] \le \frac{\Ex e^{\|G\|_2^2/4}}{e^{\rho/4}} =
    2^{|A|/2} e^{-\rho/4} \le e^{-\rho/8}.
$$
Applying this result to \eqref{eq:I_3_to_Gaussian} and using our
assumption that $\eps^2 N\ge\frac{2|A|}{\pi^2}$ we have
\begin{equation*}
  I_3 \le \frac{e^{-\frac{1}{4}\pi^2\eps^2
  N}}{(2\pi)^{\frac{|A|}{2}}\sqrt{\det(\Sigma)}}.
\end{equation*}
Lastly, we verify the first item. Using
Lemma~\ref{lemma:estimate_fourier_near_zero} and
\eqref{eq:Gaussian_FT} we have
\begin{equation*}
  I_1 = \int_{\B_R(\eps)}e^{-2\pi^2
  \theta^t\Sigma[X]\theta}|e^{\delta(\theta)}-1|d\theta,
\end{equation*}
where $|\delta(\theta)|=O(M\|\theta\|_R^3 N)$. Our assumption that
$M\eps^3 N\le c$ for a sufficiently small $c$ implies that
$|e^{\delta(\theta)}-1|\le 2M\|\theta\|_R^3 N$ for
$\theta\in\B_R(\eps)$. Thus
\begin{align*}
  I_1 &\le 2M N \int_{\B_R(\eps)}e^{-2\pi^2
  \theta^t\Sigma[X]\theta}\|\theta\|_R^3 d\theta \le 2M N \int_{\R^A}e^{-2\pi^2
  \theta^t\Sigma[X]\theta}\|\theta\|_R^3 d\theta = \\
  &=\frac{2M
  N}{(2\pi)^{\frac{|A|}{2}}\sqrt{\det(\Sigma)}}\Ex\left[\|Z\|_R^3\right]\le \frac{2M}{(2\pi^2)^{3/2}\sqrt{N}(2\pi)^{\frac{|A|}{2}}\sqrt{\det(\Sigma)}}\Ex\left[\|G\|_2^{3}\right],
\end{align*}
where we used again the relations \eqref{eq:Z_density} and
\eqref{eq:Z_to_R}. Finally, observing that by Jensen's inequality,
$\Ex[\|G\|_2^3]\le (\Ex[\|G\|_2^4])^{3/4} =
(3|A|+|A|(|A|-1))^{3/4}\le 4^{3/4}|A|^{3/2}$ and that $\frac{2\cdot
4^{3/4}}{(2\pi^2)^{3/2}}\le \frac{1}{2}$ finishes the proof.
\end{proof}
We make the choice
\begin{equation}\label{eq:eps_choice}
  \eps := \sqrt{\frac{2|A|\log N}{N}}
\end{equation}
and the assumption
\begin{equation}\label{eq:N_assumption}
  N\ge C'\cdot c_2
  c_3^2|A|^6\log(2c_3|A|)^6
\end{equation}
for some universal constant $C'>0$ chosen sufficiently large for the
following calculations. It is simple to check that with these
choices the assumption $\eps<\frac{1}{2M}$ as well as all the
assumptions in the items of Lemma~\ref{lemma:integral_bounds} hold.
Thus, \eqref{eq:Gaussian_density}, \eqref{eq:Gaussian_approx} and
Lemma~\ref{lemma:integral_bounds} imply
\begin{align*}
  \Bigg|\Pr[X=\lambda] &-
  \frac{\det(\LL(\phi))}{(2\pi)^{\frac{|A|}{2}}\sqrt{\det{\Sigma[X]}}}e^{-\frac{1}{2}(\lambda-\Ex[X])^t\Sigma[X]^{-1}(\lambda-\Ex[X])}\Bigg|\le\\
  &\le \frac{\det(\LL(\phi))}{(2\pi)^{\frac{|A|}{2}}\sqrt{\det(\Sigma[X])}}\Bigg(\frac{M|A|^{3/2}}{2\sqrt{N}} + \frac{(2\pi)^{\frac{|A|}{2}}\sqrt{\det(\Sigma[X])}}{\det(\LL(\phi))}e^{-\eps^2
    N} + e^{-\frac{1}{4}\pi^2\eps^2
    N}\Bigg).
\end{align*}
To compare the middle summand in the right-hand side with the others
we use the following crude bounds on $\det(\LL(\phi))$ and
$\det(\Sigma[X])$. First, $\det(\LL(\phi))\ge 1$ since
$\LL(\phi)\subset\Z^A$. Second, since
$$
\Sigma[X]_{a,a'} = p(1-p) \sum_{b \in B} \phi(b)_a \phi(b)_{a'} \le
c_2^2 N,
$$
by Hadamard's inequality, we deduce that $\det(\Sigma[X]) \le
c_2^{2|A|}N^{|A|} |A|^{|A|/2}$. Thus, \eqref{eq:eps_choice} and
\eqref{eq:N_assumption} imply
\begin{equation*}
  \frac{(2\pi)^{\frac{|A|}{2}}\sqrt{\det(\Sigma[X])}}{\det(\LL(\phi))}e^{-\eps^2
  N} \le \frac{(2\pi c_2^2 N)^{\frac{|A|}{2}}|A|^{\frac{|A|}{4}}}{N^{2|A|}} \le
  \frac{1}{4N^{|A|/2}}
\end{equation*}
if the constant $C'$ in \eqref{eq:N_assumption} is large enough.
Since also $e^{-\frac{1}{4}\pi^2\eps^2 N}\le \frac{1}{4N^{|A|/2}}$
we finally conclude that
\begin{equation}\label{eq:main_estimate}
  \Pr[X=\lambda] = \frac{\det(\LL(\phi))}{(2\pi)^{\frac{|A|}{2}}\sqrt{\det{\Sigma[X]}}}\left(e^{-\frac{1}{2}(\lambda-\Ex[X])^t\Sigma[X]^{-1}(\lambda-\Ex[X])}+\delta\right)
\end{equation}
where $|\delta|\le \frac{M|A|^{3/2}}{\sqrt{N}}$. Recalling that
$\lambda$ is an arbitrary point in $\LL(\phi)$ and $N=p|B|$, we see
that we have proven Theorem~\ref{thm:LCLT_with_basis} in the case
$p\le \frac{1}{2}$.

We now get rid of the assumption $p\le \frac{1}{2}$. Fix $p\ge
\frac{1}{2}$, let $N:=p|B|$ and assume that
\begin{equation}\label{eq:N_assumption2}
  |B|-N \ge C'\cdot c_2 c_3^2|A|^6\log(2c_3|A|)^6
\end{equation}
holds. Recall that $X=\sum_{b\in B} T_b \phi(b)$ with the $\{T_b\}$
independent, identically distributed and satisfying
$\Pr[T_b=1]=1-\Pr[T_b=0]=p$. Let us temporarily write $\Pr_p, \Ex_p$
and $\Sigma_p[X]$ for the probability, expectation and covariance
matrix of $X$ with a given $p$. Denote $\phi(B):=\sum_{b\in
B}\phi(b)$. The fact that $X=\lambda$ if and only $\sum_{b\in B}
(1-T_b)\phi(b)=\phi(B)-\lambda$ implies that for any
$\lambda\in\LL(\phi)$, by \eqref{eq:main_estimate}, we have
\begin{align*}
  \Pr_p[X=\lambda] &= \Pr_{1-p}[X=\phi(B)-\lambda] =\\
  &=\frac{\det(\LL(\phi))}{(2\pi)^{\frac{|A|}{2}}\sqrt{\det{\Sigma_{1-p}[X]}}}\left(e^{-\frac{1}{2}(\phi(B)-\lambda-\Ex_{1-p}[X])^t\Sigma_{1-p}[X]^{-1}(\phi(B)-\lambda-\Ex_{1-p}[X])}+\delta\right)=\\
  &=\frac{\det(\LL(\phi))}{(2\pi)^{\frac{|A|}{2}}\sqrt{\det{\Sigma_{p}[X]}}}\left(e^{-\frac{1}{2}(\lambda-\Ex_{p}[X])^t\Sigma_{p}[X]^{-1}(\lambda-\Ex_p[X])}+\delta\right),
\end{align*}
with $|\delta|\le \frac{M|A|^{3/2}}{\sqrt{|B|-N}}$, as required. In
the last equality we used the facts that
$\Sigma_{1-p}[X]=\Sigma_p[X]$, $\Ex_{1-p}[X]=\phi(B)-\Ex_p[X]$ and
that $\mu^t D\mu = (-\mu)^t D(-\mu)$ for any vector $\mu$ and matrix
$D$. This establishes Theorem~\ref{thm:LCLT_with_basis} in full.

\subsection{Proof of main theorems}\label{sec:deduction_main_thms}
We now proceed to deduce Theorems~\ref{thm:main} and
\ref{thm:count_solutions} from the local central limit theorem,
Theorem~\ref{thm:LCLT_with_basis}. Assume the conditions of
Theorem~\ref{thm:main} and let $N$ be an integer satisfying
condition~\eqref{eq:N_condition}. We wish to estimate the number of
subsets $T\subset B$ of size $N$ satisfying
\begin{equation}\label{eq:prob_def_4}
  \frac{1}{|T|}\sum_{t\in T} f(t) = \frac{1}{|B|}\sum_{b\in B}
f(b)\quad\text{for all $f$ in $V$}.
\end{equation}
Define $p:=\frac{N}{|B|}$. Let $\{\phi_a:B \to \Q\}_{a \in A}$ be a
basis for $V$. Define the random subset $T\subset B$ and random
vector $X\in\Q^A$ as in Theorem~\ref{thm:LCLT_with_basis}. Since
\begin{equation*}
  X=\sum_{t\in T} \phi(t)
\end{equation*}
the event $X=\Ex[X]$ means
\begin{equation*}
  \sum_{t\in T} \phi(t) = p\sum_{b\in B} \phi(b) =
  \frac{N}{|B|}\sum_{b\in B} \phi(b).
\end{equation*}
Thus, since $\{\phi_a\}$, $a\in A$, is a basis for $V$, the event
$X=\Ex[X]$ is equivalent to
\begin{equation}\label{eq:X_Ex_conclusion}
  \sum_{t\in T} f(t) = \frac{N}{|B|}\sum_{b\in B}
f(b)\quad\text{for all $f$ in $V$}.
\end{equation}
Now, by assumption, the constant function $h\equiv 1$ belongs to
$V$. Thus, on the event $X=\Ex[X]$ we have
\begin{equation}\label{eq:constant_assumption_usage}
  |T| = \sum_{t\in T} h(t) = \frac{N}{|B|}\sum_{b\in B} h(b) = N.
\end{equation}
Comparing \eqref{eq:prob_def_4}, \eqref{eq:X_Ex_conclusion} and
\eqref{eq:constant_assumption_usage} we see that the event
$X=\Ex[X]$ is equivalent to the event that $|T|=N$ and
\eqref{eq:prob_def_4} holds. Now, denoting by $\alpha_N$ the number
of subsets $T\subset B$ of size $N$ for which \eqref{eq:prob_def_4}
holds it follows that
\begin{equation}\label{eq:X_alpha_relation}
  \Pr[X=\Ex[X]] = \alpha_N p^N (1-p)^{|B|-N}.
\end{equation}
Finally, the divisibility assumption implies that
\begin{equation*}
  \Ex[X] = \frac{N}{|B|}\sum_{b\in B}\phi(b)\in\LL(\phi).
\end{equation*}
Thus we may substitute $\lambda=\Ex[X]$ in
Theorem~\ref{thm:LCLT_with_basis} to obtain
\begin{equation}\label{eq:prob_central_point1}
  \Pr[X=\Ex[X]] =
  \frac{\det(\LL(\phi))}{(2\pi)^{\frac{|A|}{2}}\sqrt{\det{\Sigma[X]}}}(1+\delta),
\end{equation}
with $|\delta|\le \frac{C\dim(V)^3
(\log(2c_2\dim(V)))^{3/2}}{\sqrt{\min(N,|B|-N)}}$. Comparing
\eqref{eq:X_alpha_relation} and \eqref{eq:prob_central_point1}
proves the assertion of Theorem~\ref{thm:count_solutions}. Lastly,
Theorem~\ref{thm:main} follows upon noting that the assumption
\eqref{eq:N_condition} implies that $|\delta|\le \frac{1}{2}$, so
that $\Pr[X=\Ex[X]]>0$ and hence $\alpha_N>0$.

\begin{subsection}{Basis-free formulation of local central limit
theorem}\label{sec:LCLT_basis_free} In this section we describe an
equivalent ``basis-free'' version of our local central limit
theorem, Theorem~\ref{thm:LCLT_with_basis}. The theorem is a
high-dimensional, lattice, local central limit theorem with a rate
of convergence estimate involving only universal constants.

Recall the parameter $\rho(V)$ of the vector space $V$ introduced in
\eqref{eq:rho_formula}. We introduce a second parameter of $V$, a
non-negative definite form $\ip{\cdot,\cdot}_V$ on $\Q^B$. As is the
case for $\rho(V)$, it is easiest to define $\ip{\cdot,\cdot}_V$ via
a choice of basis for $V$ but we stress that it is independent of
this choice. If $\phi:B\to\Q^A$ is such that the vectors $(\phi_a)$,
$a\in A$, form a basis for $V$, we define
\begin{equation*}
  \ip{\gamma_1,\gamma_2}_V :=
  \gamma_1^t\phi(\phi^t\phi)^{-1}\phi^t\gamma_2\qquad(\gamma_1,\gamma_2\in\Q^B).
\end{equation*}
In this definition, $\phi$ is regarded as a $B\times A$ matrix with
columns $\{\phi_a\}$. The matrix $\phi(\phi^t\phi)^{-1}\phi^t$
represents the orthogonal projection operator from $\Q^B$ (with the
standard basis and inner product) to $V$. We denote the semi-norm
induced from $\ip{\cdot,\cdot}_V$ by $\|\cdot\|_V$,
\begin{equation*}
  \|\gamma\|_V:=\sqrt{\ip{\gamma,\gamma}_V}\qquad(\gamma\in\Q^B),
\end{equation*}
so that $\|\gamma\|_V$ is the length of the orthogonal projection of
$\gamma$ to $V$. Finally, we denote by ${\bf 1}$ the identically one
vector in $\Q^B$.

\begin{thm}[Basis-free formulation of local central limit theorem]\label{thm:LCLT_without_basis}
There exists a constant $C>0$ such that the following is true. Let
$B$ be a finite set and let $V$ be a linear subspace of functions
$f:B \to \Q$. Assume that the following conditions hold for some
integers $c_2,c_3 \ge 1$,
\begin{enumerate}
\item Boundedness of $V$: $V$ has a $c_2$-bounded integer basis in $\ell_{\infty}$.
\item Boundedness of $V^{\perp}$: $V^{\perp}$ has a $c_3$-bounded integer basis in $\ell_{1}$.
\item Symmetry: for any $b_1,b_2 \in B$ there exists a symmetry $\pi$ of $V$ satisfying $\pi(b_1)=b_2$.
\end{enumerate}
Let $0<p<1$ and form a random subset $T\subset B$ by taking each
element of $B$ into $T$ independently with probability $p$. If
\begin{equation*}
  \min(p|B|, (1-p)|B|) \ge C \cdot c_2 c_3^2\dim(V)^6\log(2c_3\dim(V))^6
\end{equation*}
then for every $\gamma\in\Z^B$ the probability of the event
\begin{equation}\label{eq:LCLT_gamma_event}
\sum_{t\in T} f(t) = \sum_{b\in B} \gamma_b f(b)\quad\text{for all
$f$ in $V$}
\end{equation}
equals
\begin{equation}\label{eq:LCLT2_prob_estimate}
\frac{\rho(V)}{(2\pi
p(1-p))^{\frac{\dim(V)}{2}}}\Bigg(\exp\bigg(-\frac{\|\gamma-p\cdot{\bf
1}\|_V^2}{2p(1-p)}\bigg)+\delta(\gamma)\Bigg)
\end{equation}
with $|\delta(\gamma)|\le \frac{C\dim(V)^3
(\log(2c_2\dim(V)))^{3/2}}{\sqrt{\min(p|B|,(1-p)|B|)}}$.
\end{thm}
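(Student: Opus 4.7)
\medskip

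\noindent\textbf{Proof proposal.} The plan is to deduce Theorem~\ref{thm:LCLT_without_basis} directly from the basis-dependent version, Theorem~\ref{thm:LCLT_with_basis}, by fixing an arbitrary basis of $V$ and translating every object in the conclusion into basis-free form. The hypotheses on $V$ transfer verbatim, so the only work is the translation.

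First I would pick any basis $\phi:B\to\Q^A$ of $V$, form the random vector $X=\sum_{b\in B}T_b\phi(b)$ as in Section~\ref{sec:LCLT_with_basis}, and observe that since $X_a=\sum_{t\in T}\phi_a(t)$ and $\phi(\gamma)_a=\sum_{b\in B}\gamma_b\phi_a(b)$, the event \eqref{eq:LCLT_gamma_event} is exactly the event $X=\phi(\gamma)$. Because $\gamma\in\Z^B$, the target point $\lambda:=\phi(\gamma)$ lies in $\LL(\phi)$, so Theorem~\ref{thm:LCLT_with_basis} applies and gives an estimate for $\Pr[X=\phi(\gamma)]$ in basis-dependent notation.

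Next I would simplify the prefactor and the exponent. Since $\Sigma[X]=p(1-p)\phi^t\phi$, we have $\sqrt{\det\Sigma[X]}=(p(1-p))^{\dim(V)/2}\sqrt{\det(\phi^t\phi)}$, so
\[
\frac{\det(\LL(\phi))}{(2\pi)^{\dim(V)/2}\sqrt{\det\Sigma[X]}}=\frac{\rho(V)}{(2\pi p(1-p))^{\dim(V)/2}},
\]
matching the prefactor in \eqref{eq:LCLT2_prob_estimate}. For the exponent, write $\lambda-\Ex[X]=\phi(\gamma)-p\,\phi({\bf 1})=\phi(\gamma-p\cdot{\bf 1})$ and, regarding $\phi$ as a $B\times A$ matrix, identify $\phi(\eta)$ with the column vector $\phi^t\eta\in\Q^A$ for $\eta\in\Q^B$. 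Then
\[
(\lambda-\Ex[X])^t\Sigma[X]^{-1}(\lambda-\Ex[X])=\frac{1}{p(1-p)}(\gamma-p{\bf 1})^t\phi(\phi^t\phi)^{-1}\phi^t(\gamma-p{\bf 1})=\frac{\|\gamma-p\cdot{\bf 1}\|_V^2}{p(1-p)},
\]
using the basis-free definition of $\|\cdot\|_V$ as the norm of the orthogonal projection onto $V$. Substituting both expressions into the conclusion of Theorem~\ref{thm:LCLT_with_basis} yields \eqref{eq:LCLT2_prob_estimate} with the same error bound on $\delta$.

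Finally I would note (as a sanity check, not a real obstacle) that the outcome does not depend on the basis chosen: under a change of basis $\phi\mapsto\phi M$ with $M\in\GL(\Q^A)$, both $\det(\LL(\phi))$ and $\sqrt{\det(\phi^t\phi)}$ scale by $|\det M|$, so $\rho(V)$ is invariant; and $\phi(\phi^t\phi)^{-1}\phi^t$ is the orthogonal projector onto $V\subset\Q^B$, which is also intrinsic, so $\|\cdot\|_V$ is basis-independent as well. There is no real obstacle in this reduction---the content of the theorem is entirely in Theorem~\ref{thm:LCLT_with_basis}, and the present statement is a repackaging that makes the invariance under change of basis manifest and replaces the lattice point $\lambda$ by its natural combinatorial preimage $\gamma\in\Z^B$.
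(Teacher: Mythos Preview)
Your proposal is correct and follows essentially the same route as the paper: fix a basis, identify the event \eqref{eq:LCLT_gamma_event} with $X=\phi^t\gamma\in\LL(\phi)$, and then rewrite the prefactor via $\Sigma[X]=p(1-p)\phi^t\phi$ to recover $\rho(V)$ and the quadratic form in the exponent as $\|\gamma-p\cdot{\bf 1}\|_V^2/(p(1-p))$. The paper presents exactly this translation (including the basis-independence remark), so your argument matches it step for step.
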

It is important to emphasize that one must take $\gamma\in\Z^B$,
rather than $\gamma\in\Q^B$ in the theorem, analogously to the
restriction that $\lambda\in\LL(\phi)$ in
Theorem~\ref{thm:LCLT_with_basis}. However, since $V$ typically has
dimension strictly less than $|B|$, it is possible that for some
$\gamma\in\Q^B\setminus\Z^B$ there exists another $\gamma'\in\Z^B$
such that
\begin{equation}\label{eq:gamma_gamma_prime_equality}
  \sum_{b\in B} \gamma_b f(b) = \sum_{b\in B} \gamma_b' f(b)\quad\text{for all
$f$ in $V$}.
\end{equation}
Indeed, this is exactly the scenario we face in our main theorems.
There, we are interested in the case that $\gamma_b=\frac{N}{|B|}$
for all $b\in B$, a vector which is not in $\Z^B$. The divisibility
condition in Theorem~\ref{thm:main} exactly ensures that for this
vector there exists some $\gamma'\in\Z^B$ such that
\eqref{eq:gamma_gamma_prime_equality} holds.

Let us say that a vector $\gamma\in\Q^B$ has an \emph{integer
representation by $\gamma'\in\Z^B$} if
\eqref{eq:gamma_gamma_prime_equality} holds. It is not difficult to
check that in this case $\|\gamma-p\cdot{\bf 1}\|_V =
\|\gamma'-p\cdot1\|_V$. Thus, our theorem remains true as stated if
the restriction that $\gamma\in\Z^B$ is replaced by the condition
that $\gamma\in\Q^B$ and has an integer representation. Moreover, it
is evident that if $\gamma$ has no integer representation then the
probability of \eqref{eq:LCLT_gamma_event} is zero, since the left
hand side of \eqref{eq:LCLT_gamma_event} exactly provides an integer
representation for $\gamma$.

We now briefly explain the equivalence of
Theorem~\ref{thm:LCLT_with_basis} and
Theorem~\ref{thm:LCLT_without_basis}. Suppose that $\{\phi_a\}$,
$a\in A$, form a basis for $V$, define $\phi:B\to\Q^A$ by $\phi(b)_a
= \phi_a(B)$ and regard $\phi$ as a $B\times A$ matrix. If
$\lambda\in\LL(\phi)$ then, by definition, there exists some
$\gamma\in\Z^B$ such that
\begin{equation}\label{eq:lambda_gamma_relation}
\lambda = \sum_{b\in B}\gamma_b \phi(b) = \phi^t\gamma.
\end{equation}
It is then straightforward to check that for a subset $T\subset B$
condition \eqref{eq:LCLT_gamma_event} is equivalent to
\begin{equation}\label{eq:lambda_event}
  \sum_{t\in T} \phi(t) = \lambda.
\end{equation}
Conversely, given $\gamma\in\Z^B$ we may define
$\lambda\in\LL(\phi)$ by \eqref{eq:lambda_gamma_relation} and
observe again that \eqref{eq:LCLT_gamma_event} is equivalent to
\eqref{eq:lambda_event}. Thus, to see the equivalence of the two
theorems, it suffices to show that the main terms in the probability
estimates \eqref{eq:LCLT_prob_estimate} and
\eqref{eq:LCLT2_prob_estimate} are equal when the relation
\eqref{eq:lambda_gamma_relation} holds. This follows from the
definitions of $\rho(V)$, the definitions of $\Ex[X]$ and
$\Sigma[X]$ in Subsection~\ref{sec:LCLT_with_basis} and the
observation that under \eqref{eq:lambda_gamma_relation} we have
\begin{align*}
  \|\gamma-p\cdot{\bf 1}\|_V^2 &= (\gamma-p\cdot{\bf
  1})^t\phi(\phi^t\phi)^{-1}\phi^t(\gamma-p\cdot{\bf
  1}) =\\
  &= (\lambda - \Ex[X])^t(\phi^t\phi)^{-1}(\lambda-\Ex[X]) =
  p(1-p)(\lambda-\Ex[X])^t\Sigma[X]^{-1}(\lambda-\Ex[X]).
\end{align*}
\end{subsection}

\section{Summary and open problems}
\label{sec:summary}

Our main theorem guarantees the existence of a small subset $T
\subset B$ for which \eqref{eq:prob_def} holds. The conditions we
require are boundedness, divisibility and symmetry. In many natural
scenarios it is easy to guarantee that $V$ has a bounded integer
basis in $\ell_{\infty}$, the divisibility and the symmetry
condition, and the condition which seems hardest to verify is that
$V^{\perp}$ has a bounded integer basis in $\ell_1$. In particular,
the following question captures much of the difficulty. Let $G$ be a
group that acts transitively on a set $X$. A subset $T \subset G$ is
\emph{$X$-uniform} (or an \emph{$X$-design}) if it acts on $X$
exactly as $G$ does. That is, for any $x,y \in X$,
$$
\frac{1}{|T|} |\{g \in T: g(x)=y\}| = \frac{1}{|G|} |\{g \in G:
g(x)=y\}| = \frac{1}{|X|}.
$$
In our language we may take $B=G$ and $V$ to be the space spanned by
all functions $\phi_{(x,y)}:B\to\{0,1\}$ of the form
$\phi_{(x,y)}(b)=\mathbf{1}_{\{b(x)=y\}}$ for $x,y\in X$. Then $T$
is $X$-uniform if and only if \eqref{eq:prob_def} holds. We have
given a bounded integer basis for $V$ in $\ell_{\infty}$, and also
by definition the symmetry condition holds. The other conditions are
less clear. One may still speculate that:

\begin{conj} Let $G$ be a group that acts transitively on a set $X$. Then
there exists an $X$-uniform subset $T \subset G$ such that $|T| \le |X|^c$
for some universal constant $c>0$.
\end{conj}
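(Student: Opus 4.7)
The plan is to verify the hypotheses of Theorem~\ref{thm:main} for the choice $B := G$ and $V := \Span\{\phi_{(x,y)} : x,y \in X\}$, with $\phi_{(x,y)}(g) := \mathbf{1}_{\{g(x)=y\}}$. Several hypotheses are immediate. The set $\{\phi_{(x,y)}\}$ is a $1$-bounded integer spanning set for $V$ in $\ell_\infty$, so $c_2 = 1$. The identity $\sum_{y \in X}\phi_{(x,y)} \equiv 1$ for any fixed $x$ shows that the constant function belongs to $V$. Left multiplication by any $h \in G$ is a symmetry of $V$, since $\phi_{(x,y)}(hg) = \mathbf{1}_{\{g(x) = h^{-1}(y)\}} = \phi_{(x,h^{-1}(y))}(g)$, and the left regular action of $G$ on $B=G$ is transitive. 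Finally, $\dim(V) \le |X|^2$.

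For divisibility, every entry of $\frac{1}{|G|}\sum_{g \in G}\phi(g)$ equals $1/|X|$, so the condition $\frac{N}{|G|}\sum_g \phi(g) \in \LL(\phi)$ reduces to the constant vector with value $N/|X|$ lying in the row lattice of $\phi$. Since the rows of $\phi$ are $\{0,1\}$-vectors and summing rows indexed by a left coset of a point stabiliser recovers the appropriate integer vector, one readily obtains $c_1 \le |X|$.

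The main obstacle is to exhibit a $c_3$-bounded integer basis for $V^\perp$ in $\ell_1$ with $c_3$ polynomial in $|X|$. The first natural attempt is via local decodability (Claim~\ref{claim:LDC_implies_LDPC}): for each pair $(x^*, y^*) \in X \times X$ produce $\gamma^{(x^*, y^*)} \in \Z^G$ with $\|\gamma^{(x^*,y^*)}\|_1 \le |X|^{O(1)}$ and a non-zero integer $m$ of polynomial magnitude such that
\[
  \sum_{g \in G}\gamma_g^{(x^*,y^*)}\,\phi(g) \;=\; m \cdot e^{(x^*,y^*)}.
\]
Concretely one seeks a short signed combination of group elements whose aggregate action-indicator isolates the single transition $x^* \mapsto y^*$. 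In the three applications developed above we built such decoders using highly specific combinatorial structure: inclusion-exclusion over nested index sets in Claim~\ref{cl:OA_local_decodability} and Claim~\ref{claim:designs_gamma_vecs}, and column antisymmetrizers in Section~\ref{sec:perms}. For an arbitrary transitive $(G,X)$ no such structure is available.

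The hard part is therefore the uniform, polynomial-in-$|X|$, \emph{integer} construction of a local decoder (or directly a short spanning set for $V^\perp$) valid for every transitive group action. A promising angle is representation-theoretic: decompose the permutation representation $\Q^X$ into isotypic components under $G$, lift this to a decomposition of the natural surjection $\Q G \twoheadrightarrow V$, and control the integral structure of the resulting projections via the double cosets $G_{x^*}\backslash G/G_{y^*}$, generalising the symmetric-group analysis of Section~\ref{sec:perms}. Whether short integer duals of size $|X|^{O(1)}$ always exist at this level of generality is unclear, and producing them appears to require ideas beyond those developed in this paper; this is the step on which the conjecture most plausibly stands or falls.
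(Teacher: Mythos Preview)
The statement you were asked to prove is a \emph{conjecture} in the paper, presented in the open-problems section without proof. The paper's own discussion says precisely what you say: for $B=G$ and $V=\Span\{\phi_{(x,y)}\}$, the $\ell_\infty$-boundedness of $V$, the symmetry condition, and the constant-functions condition are immediate, while ``the condition which seems hardest to verify is that $V^{\perp}$ has a bounded integer basis in $\ell_1$.'' Your final paragraph correctly isolates this as the genuine obstruction, and your conclusion that it ``appears to require ideas beyond those developed in this paper'' is exactly the paper's position. There is no proof to compare against; your analysis matches the paper's assessment of why the question is open.

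One minor point worth flagging: your divisibility argument is too quick. Knowing that every entry of $\tfrac{1}{|G|}\sum_g\phi(g)$ equals $1/|X|$ shows that $|X|$ \emph{divides} $c_1$, but the sentence ``summing rows indexed by a left coset of a point stabiliser recovers the appropriate integer vector'' does not actually produce a $\gamma\in\Z^G$ with $\sum_{g:g(x)=y}\gamma_g$ simultaneously constant over \emph{all} pairs $(x,y)$; a transversal of $G/G_{x_0}$ handles the constraints for $x=x_0$ but not for other $x$. Bounding $c_1$ by a polynomial in $|X|$ for a general transitive action is itself not entirely obvious, and since Theorem~\ref{thm:main} requires $N$ to be a multiple of $c_1$, this is a second place where the approach could in principle fail. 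That said, the paper singles out the $V^\perp$ condition as the decisive one, and you are right to focus there.
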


A second question is whether one can apply our techniques to get
\emph{minimal} objects. Recall that the size of the objects we
achieve is only minimal up to polynomial factors. For example, can
one use these methods to show the existence of a Steiner system
(i.e., a $t$-design with $\lambda=1$)? A major open problem of a
similar spirit is the existence of Hadamard matrices of all orders
$n=4m$, or equivalently, $2$-$(4m-1, 2m-1, m-1)$ designs. Empirical
estimates for $n \le 32$ suggest that there are $\exp(O(n \log n))$
Hadamard matrices of order $n = 4m$. Since there are so many of
them, and since the logarithm of their number grows at a regular
rate, we suspect that they exist for some purely statistical reason.
However, the Gaussian local limit model seems to be false for
Hadamard matrices interpreted as $t$-designs; it does not accurately
estimate how many there are.

A third question is whether there exists an algorithmic version
of our work, similar to the algorithmic Moser~\cite{Moser2009} and
Moser-Tardos~\cite{MoserTardos2010} versions of the Lov\'{a}sz local
lemma~\cite{ErdosLovasz73}, and the algorithmic Bansal~\cite{Bansal2010}
and Lovett-Meka~\cite{LovettMeka12} versions of the six standard deviations method of Spencer~\cite{Spencer1985}.
If an efficient randomized algorithm of our method were found, then we
could no longer indisputably claim that we have a low-probability version
of the probabilistic method.  On the other hand it would be strange,
from the viewpoint of computational complexity theory, if low-probability
existence can always be converted to high-probability existence.  Maybe our
construction is fundamentally a low-probability construction.

It is also of interest to extend our results to continuous setups,
with one representative example being that of spherical designs (see
\cite{SeymourZaslavsky84}, \cite{BondarenkoRadchenkoViazovska},
\cite{kane2015small}, \cite{Gilboa2016Chebyshev} and references
within).

{\bf Acknowledgements.} We thank David Soudry and Gady Kozma for
useful remarks on the representation theory of the symmetric group.
We thank Alexander Barvinok, Peter Keevash and Brendan McKay for
valuable discussions of the connections between their work and ours.

\bibliographystyle{hamsalpha}
\bibliography{designs}

\newcommand{\etalchar}[1]{$^{#1}$}
\providecommand{\bysame}{\leavevmode\hbox to3em{\hrulefill}\thinspace}
\providecommand{\MR}{\relax\ifhmode\unskip\space\fi MR }
\providecommand{\MRhref}[2]{%
  \href{http://www.ams.org/mathscinet-getitem?mr=#1}{#2}
}
\providecommand{\href}[2]{#2}
\providecommand{\eprint}{\begingroup \urlstyle{rm}\Url}
\begin{thebibliography}{CGG{\etalchar{+}}10}

\bibitem[AL11]{AlonLovett11}
Noga Alon and Shachar Lovett, \emph{Almost k-wise vs k-wise independent
  permutations, and uniformity for general group actions}, 2011, ECCC TR11-049.

\bibitem[AV97]{AlonVu97}
Noga Alon and Van~H. Vu, \emph{{Anti-Hadamard} matrices, coin weighing,
  threshold gates and indecomposable hypergraphs}, J. Combin. Theory Ser. A
  \textbf{79} (1997), no.~1, 133--160.

\bibitem[Ban10]{Bansal2010}
Nikhil Bansal, \emph{Constructive algorithms for discrepancy minimization},
  Proceedings of the 2010 IEEE 51st Annual Symposium on Foundations of Computer
  Science, FOCS '10, IEEE Computer Society, 2010, \eprint{arXiv:1002.2259},
  pp.~3--10.

\bibitem[Bap00]{Bapat}
R.~B. Bapat, \emph{Moore-{P}enrose inverse of set inclusion matrices}, Linear
  Algebra Appl. \textbf{318} (2000), no.~1-3, 35--44.

\bibitem[Bar17]{barvinok2017book}
A.~Barvinok, \emph{Combinatorics and complexity of partition functions},
  Algorithms and Combinatorics, Springer International Publishing, 2017.

\bibitem[BH10]{barvinok2010maximum}
Alexander Barvinok and JA~Hartigan, \emph{Maximum entropy gaussian
  approximations for the number of integer points and volumes of polytopes},
  Advances in Applied Mathematics \textbf{45} (2010), no.~2, 252--289.

\bibitem[BH12]{barvinok2012asymptotic}
Alexander Barvinok and J~Hartigan, \emph{An asymptotic formula for the number
  of non-negative integer matrices with prescribed row and column sums},
  Transactions of the American Mathematical Society \textbf{364} (2012), no.~8,
  4323--4368.

\bibitem[BH13]{barvinok2013number}
Alexander Barvinok and John~A Hartigan, \emph{The number of graphs and a random
  graph with a given degree sequence}, Random Structures \& Algorithms
  \textbf{42} (2013), no.~3, 301--348.

\bibitem[BRV13]{BondarenkoRadchenkoViazovska}
Andriy Bondarenko, Danylo Radchenko, and Maryna Viazovska, \emph{Optimal
  asymptotic bounds for spherical designs}, Ann. of Math. (2) \textbf{178}
  (2013), no.~2, 443--452.

\bibitem[Cam95]{Cameron95}
P.~J. Cameron, \emph{Permutation groups}, Handbook of combinatorics, {Vol.}\
  1,\ 2, Elsevier, 1995, pp.~611--645.

\bibitem[CD07]{Hand07}
Charles~J. Colbourn and Jeffrey~H. Dinitz (eds.), \emph{The {CRC} handbook of
  combinatorial designs}, 2nd ed., Discrete Mathematics and its Applications,
  Chapman \& Hall/CRC, 2007.

\bibitem[CGG{\etalchar{+}}10]{CGGMR2010}
E.~Rodney Canfield, Zhicheng Gao, Catherine Greenhill, Brendan~D. McKay, and
  Robert~W. Robinson, \emph{Asymptotic enumeration of correlation-immune
  {B}oolean functions}, Cryptogr. Commun. \textbf{2} (2010), no.~1, 111--126.

\bibitem[CM05]{CanfieldMckay}
E.~Rodney Canfield and Brendan~D. McKay, \emph{Asymptotic enumeration of dense
  0-1 matrices with equal row sums and equal column sums}, Electron. J. Combin.
  \textbf{12} (2005), Research Paper 29, 31 pp. (electronic).

\bibitem[dLL10]{de2010fourier}
Warwick de~Launey and David~A Levin, \emph{A fourier-analytic approach to
  counting partial hadamard matrices}, Cryptography and Communications
  \textbf{2} (2010), no.~2, 307--334.

\bibitem[EL75]{ErdosLovasz73}
Paul Erd\H{o}s and L\'aszl\'o Lov\'asz, \emph{Problems and results on
  3-chromatic hypergraphs and some related questions}, Infinite and Finite
  Sets, Coll. Math. Soc. J. Bolyai, no.~11, North-Holland, 1975, pp.~609--627.

\bibitem[Far09]{Farhi09}
B.~Farhi, \emph{An identity involving the least common multiple of binomial
  coefficients and its application}, Amer. Math. Monthly \textbf{116} (2009),
  no.~9, 836--839.

\bibitem[FH91]{FultonHarris}
W.~Fulton and J.~Harris, \emph{Representation theory}, Graduate Texts in
  Mathematics, vol. 129, Springer, New York, 1991.

\bibitem[FPY12]{FinucanePeledYaari}
Hilary Finucane, Ron Peled, and Yariv Yaari, \emph{A recursive construction of
  t-wise uniform permutations}, \eprint{arxiv:1111.0492}.

\bibitem[GJ73]{GraverJurkat}
J.~E. Graver and W.~B. Jurkat, \emph{The module structure of integral designs},
  J. Combinatorial Theory Ser. A \textbf{15} (1973), 75--90.

\bibitem[GKLO16]{glock2016existence}
Stefan Glock, Daniela K{\"u}hn, Allan Lo, and Deryk Osthus, \emph{The existence
  of designs via iterative absorption}, arXiv preprint arXiv:1611.06827 (2016).

\bibitem[GP16]{Gilboa2016Chebyshev}
Shoni Gilboa and Ron Peled, \emph{Chebyshev-type quadratures for doubling
  weights}, Constructive Approximation (2016), 1--24.

\bibitem[HSS99]{orthogonalarrays}
A.~S. Hedayat, N.~J.~A. Sloane, and John Stufken, \emph{Orthogonal arrays:
  Theory and applications}, Springer-Verlag, 1999.

\bibitem[IM16]{isaev2016complex}
Mikhail Isaev and Brendan~D McKay, \emph{Complex martingales and asymptotic
  enumeration}, arXiv preprint arXiv:1604.08305 (2016).

\bibitem[Jam78]{James}
G.~D. James, \emph{The representation theory of the symmetric groups}, Lecture
  Notes in Mathematics, vol. 682, Springer, Berlin, 1978.

\bibitem[Kan15]{kane2015small}
Daniel Kane, \emph{Small designs for path-connected spaces and path-connected
  homogeneous spaces}, Transactions of the American Mathematical Society
  \textbf{367} (2015), no.~9, 6387--6414.

\bibitem[Kee14]{keevash2014existence}
Peter Keevash, \emph{The existence of designs}, arXiv preprint arXiv:1401.3665
  (2014).

\bibitem[Kee15]{keevash2015counting}
\bysame, \emph{Counting designs}, arXiv preprint arXiv:1504.02909 (2015).

\bibitem[KLP12]{KuperbergLovettPeled12}
Greg Kuperberg, Shachar Lovett, and Ron Peled, \emph{Probabilistic existence of
  rigid combinatorial structures}, Proceedings of the 44th annual ACM symposium
  on Theory of computing, STOC, ACM, 2012, \eprint{arXiv:1111.0492},
  pp.~1091--1106.

\bibitem[KM94]{KollerMegiddo94}
Daphne Koller and Nimrod Megiddo, \emph{Constructing small sample spaces
  satisfying given constants}, SIAM J. Discrete Math. \textbf{7} (1994), no.~2,
  260--274.

\bibitem[KNR05]{KaplanNaorReingold05}
E.~Kaplan, M.~Naor, and O.~Reingold, \emph{Derandomized constructions of
  $k$-wise (almost) independent permutations}, Approximation, randomization and
  combinatorial optimization (C.~Chekuri, K.~Jansen, J.~D.~P. Rolim, and
  L.~Trevisan, eds.), Lecture Notes in Computer Science, vol. 3624, Springer,
  2005, pp.~354--365.

\bibitem[KP82]{KarpPapadimitriou82}
Richard~M. Karp and Christos~H. Papadimitriou, \emph{On linear
  characterizations of combinatorial optimization problems}, SIAM J. Comput.
  \textbf{11} (1982), no.~4, 620--632.

\bibitem[LM12]{LovettMeka12}
Shachar Lovett and Raghu Meka, \emph{Constructive discrepancy minimization by
  walking on the edges}, 2012, \eprint{arXiv:1203.5747}.

\bibitem[LW17]{liebenau2017asymptotic}
Anita Liebenau and Nick Wormald, \emph{Asymptotic enumeration of graphs by
  degree sequence, and the degree sequence of a random graph}, arXiv preprint
  arXiv:1702.08373 (2017).

\bibitem[Mag09]{Magliveras09}
Spyros~S. Magliveras, \emph{Large sets of $t$-designs from groups}, Mathematica
  Slovaca \textbf{59} (2009), no.~1, 1--20.

\bibitem[Mon14]{montgomery2014asymptotic}
Aaron~M Montgomery, \emph{An asymptotic formula for the number of balanced
  incomplete block design incidence matrices}, arXiv preprint arXiv:1407.4552
  (2014).

\bibitem[Mos09]{Moser2009}
Robin~A. Moser, \emph{A constructive proof of the {Lov\'asz} local lemma},
  Proceedings of the 41st annual ACM symposium on Theory of computing, STOC,
  ACM, 2009, \eprint{arXiv:0810.4812}, pp.~343--350.

\bibitem[MT10]{MoserTardos2010}
Robin~A. Moser and G\'abor Tardos, \emph{A constructive proof of the general
  {Lov\'asz} local lemma}, J. ACM \textbf{57} (2010), no.~2, 11:1--11:15,
  \eprint{arXiv:0903.0544}.

\bibitem[MW90]{MckayWormald}
Brendan~D. McKay and Nicholas~C. Wormald, \emph{Asymptotic enumeration by
  degree sequence of graphs of high degree}, European J. Combin. \textbf{11}
  (1990), no.~6, 565--580.

\bibitem[Rao73]{Rao73}
C.~Radhakrishna Rao, \emph{Some combinatorial problems of arrays and
  applications to design of experiments}, Survey of combinatorial theory (J.~N.
  Srivastava, ed.), North-Holland, 1973, pp.~349--359.

\bibitem[RCW75]{RayWilson75}
Dijen~K. Ray-Chaudhuri and Richard~M. Wilson, \emph{On $t$-designs}, Osaka J.
  Math. \textbf{12} (1975), no.~3, 737--744.

\bibitem[Rob38]{Robinson38}
G.~de~B. Robinson, \emph{On the representations of the symmetric group},
  American Journal of Mathematics \textbf{60} (1938), no.~3, 745--760.

\bibitem[Sch61]{Schensted61}
C.~Schensted, \emph{Longest increasing and decreasing subsequences}, Canadian
  Journal of Mathematics \textbf{13} (1961), 179--191.

\bibitem[Spe85]{Spencer1985}
Joel Spencer, \emph{Six standard deviations suffice}, Trans. Amer. Math. Soc.
  \textbf{289} (1985), no.~2, 679--706.

\bibitem[SZ84]{SeymourZaslavsky84}
P.~D. Seymour and Thomas Zaslavsky, \emph{Averaging sets: a generalization of
  mean values and spherical designs}, Adv. in Math. \textbf{52} (1984), no.~3,
  213--240.

\bibitem[Tei87]{Teirlinck87}
Luc Teirlinck, \emph{Non-trivial $t$-designs without repeated blocks exist for
  all $t$}, Discrete Math. \textbf{65} (1987), no.~3, 301--311.

\bibitem[Wil73]{Wilson73}
Richard~M. Wilson, \emph{The necessary conditions for {$t$}-designs are
  sufficient for something}, Utilitas Math. \textbf{4} (1973), 207--215.

\bibitem[Wil90]{Wilson90}
\bysame, \emph{A diagonal form for the incidence matrices of {$t$}-subsets vs.\
  {$k$}-subsets}, European J. Combin. \textbf{11} (1990), no.~6, 609--615.

\bibitem[Yek11]{Yekhanin}
Sergey Yekhanin, \emph{Locally decodeable codes}, Foundations and Trends in
  Theoretical Computer Science \textbf{7} (2011), no.~1, 1--117.

\end{thebibliography}

\end{document}